\pgfplotsset{/pgf/number format/use comma,compat=newest}
\numberwithin{equation}{section}
\numberwithin{equation}{section}
\theoremstyle{definition}
\newtheorem{definition}[equation]{Definition}
\theoremstyle{definition}
\newtheorem{remark}[equation]{Remark}
\theoremstyle{definition}
\theoremstyle{definition}
\theoremstyle{lemma}
\newtheorem{assumption}[equation]{Assumption}
\theoremstyle{lemma}
\newtheorem{lemma}[equation]{Lemma}
\theoremstyle{theorem}
\newtheorem{theorem}[equation]{Theorem}
\theoremstyle{proposition}
\newtheorem{proposition}[equation]{Proposition}
\theoremstyle{corollary}
\theoremstyle{corollary}
\theoremstyle{definition}
\newtheorem{example}[equation]{Example}
\theoremstyle{example}
\theoremstyle{proposition}
\theoremstyle{definition}
\newcommand{\one}{\mathbf{1}}
\newcommand{\R}{\mathbf{R}}
\newcommand{\PP}{\mathbf{P}}
\newcommand{\E}{\mathbf{E}}
\renewcommand{\P}{\mathbf{P}}
\newcommand{\T}{\mathbf{T}}
\newcommand{\N}{\mathbf{N}}
\newcommand{\Z}{\mathbf{Z}}
\newcommand{\ZZ}{\mathscr{Z}}
\newcommand{\bk}{\mathbf{k}}
\newcommand{\bj}{\mathbf{j}}
\newcommand{\bl}{\mathbf{l}}
\newcommand{\bm}{\mathbf{m}}
\newcommand{\bn}{\mathbf{n}}
\title{Phase space contraction of degenerately damped random splittings}
\author[D.P.~Herzog and J.C. Mattingly]{David P.~Herzog$^1$ and Jonathan C. Mattingly$^2$}
\address{$^1$ Department of Mathematics, Iowa State University, Ames, Iowa,
  USA}
\email{dherzog@iastate.edu}
\address{$^2$ Department of Mathematics, Duke University, Durham, North Carolina, USA}
\email{jonathan.mattingly@duke.edu}
\begin{document}
\maketitle

\begin{abstract}
When studying out-of-equilibrium systems, one often excites the dynamics
in some degrees of freedom while removing the excitation in others through damping. In order for the system to converge
 to a statistical steady state, the dynamics must transfer the
energy from the excited modes to the dissipative directions. The
 precise mechanisms underlying this transfer are of particular interest and are the
 topic of this paper. We explore a class of randomly switched models
 introduced in \cite{AMM_22,AMM_23} and provide some of the first
 results showing that minimal damping is sufficient to stabilize the system in a fluids model.

\end{abstract}

\section{Introduction and Motivation}

Many dynamical systems of scientific importance are expected to
be ergodic, where time averages of observables with respect to the system converge in the long-run to spatial averages against an
invariant measure.  We are interested in
settings where such statistical steady states are not in local equilibrium,
but rather have a nontrivial flux through the system. Particular examples include
chains of coupled oscillators with ends
connected to heat baths at different temperatures \cite{Rey-Bellet,Eckmann, HairerMattingly} and
Euler equations with forcing and  dissipation acting
on limited degrees of freedom \cite{BL_24, williamson19}. See also~\cite{CEHRB_18} for graphs of coupled oscillators with only some oscillators containing damping and random forcing. In such examples, energy is injected
in some directions in space and then transferred by the
conservative part of the dynamics to the dissapative degrees of freedom. This process typically produces an equilibrium steady state
with a flux from the forced degrees of freedom to the dissipated
ones. We will refer to such forced-dissipative systems
where the dissipation only acts on a subset of the degrees of freedom as
\textit{partially  dissipative}.

In partially dissipative systems, establishing the existence of a statistical steady state can be highly nontrivial. One of the main goals of this paper is to prove the
existence of such steady states under a randomly modified
dynamics, in settings such as finite-dimensional projections of the partially damped-driven Euler equations. 

To make the discussion more concrete, fix an ordinary differential equation (ODE)
\begin{align}\label{eq:V}
  \dot x = V(x)
\end{align}
on a Hilbert space $\mathbf{X}$ which is nonexplosive for all initial conditions $x_0 \in \mathbf{X}$ and  conserves a quantity $H\colon
\mathbf{X} \rightarrow [0, \infty)$. Note that conservation simply means that
$H(x_t) = H(x_0)$ for all $t \geq 0$. Often, the vector field $V$ is
the Hamiltonian flow associated with Hamiltonian $H$. To the dynamics~\eqref{eq:V}, one can then add dissipation
and forcing to arrive at the forced-damped system
\begin{align*}
  \dot x = V(x) -\Lambda x + F\,
\end{align*}
where $\Lambda:\mathbf{X}\rightarrow \mathbf{X}$ is symmetric, non-negative definite and $F:[0,\infty)\rightarrow \mathbf{X}$ is time-dependent.  
 When $\Lambda$ is furthermore assumed strictly positive-definite, in many scenarios it is 
not hard to show that that there exist constants $\alpha \in (0,1)$
and  $f\geq 0$ so that for some fixed time $t$
\begin{align}\label{eq:strongLyap}
  H( x_t ) \leq \alpha H(x_0) + f\,.
\end{align}
If the sublevel sets $\{ x : H (x) \leq c\}$, $c\geq 0$, are furthermore compact in $\mathbf{X}$, then
\eqref{eq:strongLyap} quickly implies the existence of a stationary
measure. In random systems with sufficient noise to ensure
that the stationary measure is unique, the bound
\eqref{eq:strongLyap}  in expectation is critical in proving that the system
converges to the equilibrium at a prescribed rate.  

In scenarios where $\Lambda$ has a nontrivial kernel, proving estimates like
\eqref{eq:strongLyap} can be difficult because it requires
detailed knowledge of the conservative dynamics~\eqref{eq:V}.  Even in this context, however, there are examples where the forcing allows the conservative part~\eqref{eq:V} to transport energy to the dissipative directions, keeping it from building up in the undamped degrees of freedom.   When this phenomenon 
happens at a sufficiently fast rate, one can ensure the existence of a
stationary state and its uniqueness if additional conditions are
satisfied.
We refer to~\cite{BBCL_24, BL_24, Cam_22, williamson19} for concrete examples in fluid-like models on Euclidean space and to~\cite{Rey-Bellet, CEHRB_18, Eckmann, HairerMattingly} for Langevin-like systems.  More closely related to the current paper is the very interesting work~\cite{BBCL_24} which appeared after this work was finished. It considers hypoelliptic SDEs with ``generic" Euler-type nonlinearities in the degenerately damped setting.  There, under the assumption that the dimension of the damped modes is at least one third of the dimension of the phase space, for a set of Euler type nonlinearities of full Lebesgue measure the resulting SDE has a unique stationary measure.  Note that this result does not identify which nonlinearities within this class have this property, but it is tantalizing because almost all of them have to have it. It is related to \cite{ williamson19} which also considered ``generic'' nonlinearities. In \cite{ williamson19} and most other fluid models~\cite{BL_24, Cam_22, williamson19}, a significant number of directions need damping. In contrast, one needs far less dissipative directions in the situation of oscillators provided the interaction potentials are strong enough relative to the pinning potentials~\cite{Rey-Bellet, CEHRB_18, Eckmann}.  However, when the interactions are weak, even in the case of a short chain of oscillators, the propagation of energy to dissipative modes can be highly nontrivial~\cite{HairerMattingly}.

In contexts similar to~\cite{BL_24, Cam_22, williamson19}, this paper will present some of the first results establishing the existence
of steady states where the dissipative directions are restricted to a small number
of degrees of freedom, even when the system's dimension is large. This will be done for \emph{randomly split} versions of equations such as Galerkin projections of two-dimensional Euler and the Lorenz '96 model.  Random splittings are simplified models which afford stronger results at the cost of disrupting the original dynamics.  However, they are insightful as they help build intuition about the full dynamics and the finer mechanisms involving the transport of energy to these isolated dissipative directions.  For example, our main results
establish the existence of statistical steady states for the randomly
split Galerkin 2D Euler equations and the randomly split Lorenz '96
models under the hypothesis that the dissipative operator is restricted to a finite
number of well-placed modes. Furthermore, the number of dissipative modes does not
grow with the dimension of the system, which is new in these
settings. 

% In \cite{williamson19}, conditions for generic Euler-like
%systems to posses an invariant measure with partial dissipation and
%forcing. In \cite{Liss}, the Galerkin approximations of 2D Euler were
%considered with a few degrees of freedom left without
%dissipation. The conditions in both examples required that a large
%fraction of the Fournier modes be dissipated. Neither were able to prove
%stability when only a finite, dimension independent number of Fournier
%modes have dissipation. 

Random splittings can be thought of as a ``randomly teased" version of the original dynamics at every time step, but in way that is natural to the dynamics. One can view one cycle of the splitting as ``generic in some sense'' evolution that is close to the true evolution. On each time step, we chose a different flow near by to the true flow.  On average, the resulting random dynamics is very close to the deterministic dynamics and all of the fluctuations are built from the base vector fields that define the
dynamics, but in a different mixture than the deterministic system. To build the random splittings, we choose
elements of the dynamics that are thought to be fundamental to the structure of the flow of the base dynamics. For example, for the 2D Euler equation, we use
the three-mode resonant interactions as our dynamical building
blocks which maintain many of the invariant
structures of the original system. This is in contrast to more traditional
stochastic agitation methods which use additive Brownian forcing.

Although some unlikely stochastic realizations are leveraged in the proof of the
existence of a stationary measure, we will see that the arguments leading to energy dissipation align well with the scenarios people often
invoke in heuristic explanations of why such systems with partial
dissipation should be stable. In particular, we will see how energy
transports via the building blocks, e.g. the three-mode resonant interactions in Euler, to the dissipative directions. Additionally, the
probabilities of the identified dissipative events scale
sufficiently well with the energy that the implied convergence rates would
be geometric for the Lorenz '96 system and stretched exponential for
the Euler system. The  stretched exponential convergence is not
completely surprising given that the fixed points of Euler and their
nearby slow dynamics persist in the random splitting.
It is also worth noting that we make no use of any additional
stochastic forcing. Consequently, we
do not deduce any convergence-to-equilibrium theorems.  Rather, we only
give existence of stationary measures.  In existing, related
works \cite{AMM_22,AMM_23,williamson19}, the uniqueness of the invariant measure for a
stochastically switched Euler system was shown within the class of
measures absolutely continuous with respect to Lebesgue measure on the constraint
surface. While these results could almost certainly be adapted to
this setting, they do not preclude the system converging to one of the
fixed points preserved by the random splitting. Random forcing is
important to show that the system leaves the neighborhood of the fixed
points in an efficient way. In future work, it would be interesting to combine random
switching and random forcing, though we do not explore that here.

\vspace{1ex}
\noindent \textbf{Paper Organization: }
In Section~\ref{sec:GRPD}, we
introduce some general results for proving the tightness needed
to guarantee the existence of an invariant measure for the system. We
also make some comments on the return times to compact sets that give
an indication what the rates of convergence to equilibrium might
be. In Section~\ref{sec:notation}, we introduce randomly split systems and the first of our two
examples; namely, the Lorenz '96 model. We demonstrate most of the ideas
introduced in this section by proving the needed assumptions for the
Lorenz '96 model. In Section~\ref{sec:lor}, we establish the required uniform
lower bound
on the probability of entering into a dissipative region in the
Lorenz '96 model. In Section~\ref{sec:NSE},  we turn to the Euler
equations and prove all of the needed estimates in that setting by
adapting the calculations used for the Lorenz '96. In many ways, however, the
Euler setting is significantly more complicated.

\vspace{1ex}
\noindent \textbf{Acknowledgments:} We graciously acknowledge support
from National Science Foundation grants DMS-2246491 and DMS-2038056.
D.P.H. and J.C.M acknowledges support from the Simons Foundation
through grants MP-TSM-00002755 and MPS-SFM-00006282, respectively.
The authors also acknowledge fruitful conversations with Andrea
Agazzi, Kyle Liss, Theo Drivas, and Omar Melikechi.  The authors are
grateful for useful feedback from the participants at the workshop
``Singularity and Prediction in Fluids'' during the summer of 2022
where an early version of these results was first presented.

\section{General Results for Partially Dissipative Systems}
\label{sec:GRPD}
In this section, $(\mathbf{X}, \rho)$ denotes a complete, separable metric space and $\mathcal{B}$ its associated Borel $\sigma$-algebra of subsets.  We let $(X_n)$ be a discrete-time Markov chain on the state space $\mathbf{X}$ with transition semigroup $(\mathcal{P}_n )$ defined over the probability space $(\Omega, \mathcal{F}, \PP, \E)$.  The operator $\mathcal{P}_n$ acts on bounded measurable functions $\phi: \mathbf{X}\rightarrow \mathbf{R}$ and finite $\mathcal{B}$ measures $\mu$ via 
\begin{align}
\label{def:semigroup}
(\mathcal{P}_n \phi)(x) := \E_x  \phi(X_n) \qquad \text{ and } \qquad (\mu \mathcal{P}_n)(B) := \int_\mathbf{X} \PP_x( X_n \in B) \mu(dx)  
\end{align}  
for $x\in \mathbf{X}$ and $B\in \mathcal{B}$.  In~\eqref{def:semigroup}, the notation $\E_x$ and $\PP_x$ means $\E$ and $\P$, but indicates that the  Markov chain has initial state $X_0=x$.  For any $n\geq 0$, $B\in \mathcal{B}$ and $x\in \mathbf{X}$, we let 
\begin{align}
\mathcal{P}_n(x, B)= (\mathcal{P}_n \one_B)(x) =\PP_x( X_n \in B)
\end{align}
denote the Markov transition probabilities.  Throughout, $\mathcal{P}_1$ will be denoted by $\mathcal{P}$.

The following elementary result is the beginning of our analysis.

\begin{proposition}\label{prop:Lyop}
  Let  $H\colon \mathbf{X} \rightarrow [1,\infty)$ and $p\colon
  [1,\infty) \rightarrow (0,1]$ be measurable, and let $\alpha \in  (0,1)$ and $f_1,f_2\geq 0$ be constants.  Suppose for any $x \in
  \mathbf{X}$ there exists an event $A=A(x) \in \mathcal{F}$  such that 
  $\PP_x(A) \geq p(H(x))$ and such that the following bounds hold
  \begin{align}
   \label{eqn:PDest} \mathbf{E}_x\big[ H(X_1) \,\one_{A}\big]
    &\leq  \alpha H(x)\mathbf{P}_x(A)+ f_1,\\
    \label{eqn:subconest} \mathbf{E}_x \big[ H(X_1) \, \one_{A^c}\big]  &
                                                                       \leq
                                                                       H(x) \mathbf{P}_x(A^c)
                                                                       + f_2                                                          
  \end{align}
  for every $x\in \mathbf{X}$.  
  Then the global estimate 
  \begin{align}
  \label{eqn:contract}
     \mathcal{P} H -H & \leq - (1-\alpha) p(H)\, H\,
                                                + f_1+f_2
  \end{align}
  holds on $\mathbf{X}$.
\end{proposition}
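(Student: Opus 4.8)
The plan is to compute $\mathcal{P}H(x) = \E_x H(X_1)$ directly by splitting over the event $A = A(x)$ and its complement, then feed in the two hypotheses \eqref{eqn:PDest} and \eqref{eqn:subconest}. First I would write
\begin{align*}
  \mathcal{P}H(x) = \E_x\big[H(X_1)\one_A\big] + \E_x\big[H(X_1)\one_{A^c}\big]
  \leq \alpha H(x)\PP_x(A) + f_1 + H(x)\PP_x(A^c) + f_2,
\end{align*}
which is just additivity of expectation together with the two assumed bounds. (One should note $H \geq 1 \geq 0$ so all terms are well defined in $[0,\infty]$; the bounds force finiteness.)

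Next I would use $\PP_x(A^c) = 1 - \PP_x(A)$ to combine the two $H(x)$-terms:
\begin{align*}
  \alpha H(x)\PP_x(A) + H(x)\big(1 - \PP_x(A)\big) = H(x) - (1-\alpha)H(x)\PP_x(A),
\end{align*}
so that $\mathcal{P}H(x) - H(x) \leq -(1-\alpha)H(x)\PP_x(A) + f_1 + f_2$. Finally, since $1-\alpha > 0$ and $H(x) \geq 1 > 0$, the lower bound $\PP_x(A) \geq p(H(x))$ gives $-(1-\alpha)H(x)\PP_x(A) \leq -(1-\alpha)p(H(x))H(x)$, which yields \eqref{eqn:contract}. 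Taking the supremum over $x \in \mathbf{X}$ (or rather noting the inequality holds pointwise) finishes the argument.

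There is essentially no hard step here; the proposition is a bookkeeping lemma. The only thing to be slightly careful about is the direction of the inequality when replacing $\PP_x(A)$ by its lower bound $p(H(x))$ — this replacement is valid precisely because the coefficient $-(1-\alpha)H(x)$ is negative, using $H \geq 1$. It is worth remarking that the role of \eqref{eqn:subconest} is only to control the ``bad'' part $A^c$ by the trivial no-contraction estimate $H(x)\PP_x(A^c) + f_2$, so that all the gain comes from the event $A$ where genuine contraction by the factor $\alpha$ occurs.
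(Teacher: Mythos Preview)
Your proof is correct and follows essentially the same approach as the paper: split $\mathcal{P}H(x)$ over $A$ and $A^c$, apply the two hypotheses, combine via $\PP_x(A^c)=1-\PP_x(A)$, and then use the lower bound $\PP_x(A)\geq p(H(x))$ together with $(1-\alpha)H(x)>0$. The paper's proof is line-for-line the same computation.
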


\begin{proof}  Observe for any $x\in \mathbf{X}$ we have  
  \begin{align*}
     \mathcal{P} H(x) &=  \mathbf{E}_x
    \big[ H\big(X_1\big) \,\one_{A}\big] 
    +   \mathbf{E}_x
    \big[ H\big(X_1\big) \,\one_{A^c}\big]    \\
    & \leq  [ \alpha  \P_x(A) + (1-   \P_x(A) ) ]
      H(x) + f_1  +f_2\\&\leq  [ 1- \P_x(A) (1-\alpha) ]
      H(x) + f_1+f_2 \\
    & \leq [ 1 - (1-\alpha) p(H(x))]   H(x) + f_1+f_2 .
  \end{align*}
  This concludes the proof. 
\end{proof}

The next lemma shows that the conclusion
of Proposition~\ref{prop:Lyop} guarantees the existence of (at least one)
stationary distribution for the Markov chain provided $x\mapsto p(H(x))H(x) $  has compact sublevel sets and the semigroup $(\mathcal{P}_n)$ is \emph{Feller}; that is, $x\mapsto \mathcal{P}\phi(x)$ is continuous whenever $\phi:\mathbf{X}\rightarrow \R$ is bounded, continuous.

\begin{lemma}
\label{lem:KBmeas} Assume $(\mathcal{P}_n)$ is Feller.  Suppose there exist measurable $H\colon \mathbf{X}\rightarrow [1,\infty)$ and $G\colon
 [1,\infty)\rightarrow (0,\infty)$ such that $\{ x\in \mathbf{X}\, : \, G(H(x)) \leq \alpha \}$ is compact for every $\alpha > 0$, and for some constant $f\geq 0$  we have the following estimate on $\mathbf{X}$:
 \begin{align}\label{eq:LyapGen}
   \mathcal{P} H- H \leq -  G(H) + f.
 \end{align}
 Then the Markov chain $(X_n)$ has at least one stationary distribution.
\end{lemma}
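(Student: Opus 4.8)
The plan is to run the Krylov--Bogolyubov averaging argument, using \eqref{eq:LyapGen} to produce a tight sequence of Ces\`aro-averaged measures and then the Feller property to pass to a stationary limit. Fix any $x_0\in\mathbf{X}$; since $H(x_0)\in[1,\infty)$ is finite and $G>0$ forces $\mathcal{P}H\leq H+f$ pointwise, a straightforward induction (using monotonicity of $\mathcal{P}_n$ on nonnegative functions and $\mathcal{P}_n$ preserving constants) gives $\mathcal{P}_nH(x_0)\leq H(x_0)+nf<\infty$ for all $n\geq 0$. In particular every $\mathbf{E}_{x_0}[H(X_n)]$ is finite, so applying $\mathcal{P}_k$ to \eqref{eq:LyapGen} evaluated at $x_0$ and rearranging yields
\begin{align*}
\mathbf{E}_{x_0}\big[G(H(X_k))\big]\leq \mathbf{E}_{x_0}[H(X_k)]-\mathbf{E}_{x_0}[H(X_{k+1})]+f .
\end{align*}
Summing over $k=0,\dots,N-1$, the right-hand side telescopes, and dropping the nonnegative term $\mathbf{E}_{x_0}[H(X_N)]$ gives the bound, uniform in $N$,
\begin{align*}
\frac1N\sum_{k=0}^{N-1}\mathbf{E}_{x_0}\big[G(H(X_k))\big]\leq \frac{H(x_0)}{N}+f\leq H(x_0)+f=:C .
\end{align*}

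Next, introduce the probability measures $\mu_N:=\frac1N\sum_{k=0}^{N-1}\delta_{x_0}\mathcal{P}_k$ on $(\mathbf{X},\mathcal{B})$. The last display says precisely that $\int_{\mathbf{X}}G(H)\,d\mu_N\leq C$ for all $N$. Given $\varepsilon>0$, put $\alpha:=C/\varepsilon$ and $K_\varepsilon:=\{x\in\mathbf{X}:G(H(x))\leq\alpha\}$, which is compact by hypothesis. By Markov's inequality, $\mu_N(K_\varepsilon^c)=\mu_N(\{G(H)>\alpha\})\leq C/\alpha=\varepsilon$ for every $N$, so $(\mu_N)_{N\geq 1}$ is tight. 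By Prokhorov's theorem there is a subsequence $\mu_{N_j}$ converging weakly to a probability measure $\mu$ on $\mathbf{X}$ (tightness guarantees the limit loses no mass).

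Finally, I check $\mu$ is stationary. For bounded continuous $\phi\colon\mathbf{X}\to\mathbf{R}$, the Feller property makes $\mathcal{P}\phi$ bounded and continuous, so weak convergence gives $\mu(\mathcal{P}\phi)=\lim_j\mu_{N_j}(\mathcal{P}\phi)$ and $\mu(\phi)=\lim_j\mu_{N_j}(\phi)$. On the other hand, the semigroup property yields the exact identity $\mu_N\mathcal{P}-\mu_N=\frac1N(\delta_{x_0}\mathcal{P}_N-\delta_{x_0})$, whose total mass is at most $2/N$, so $|\mu_N(\mathcal{P}\phi)-\mu_N(\phi)|\leq \frac{2}{N}\|\phi\|_\infty\to 0$. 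Combining these gives $\mu(\mathcal{P}\phi)=\mu(\phi)$ for all bounded continuous $\phi$, i.e.\ $\mu\mathcal{P}=\mu$, so $\mu$ is a stationary distribution for $(X_n)$.

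The argument is essentially standard; the only points needing a little care are the a priori finiteness of the moments $\mathbf{E}_{x_0}[H(X_n)]$ that legitimizes the telescoping sum, and the observation that a single scalar bound $\int G(H)\,d\mu_N\leq C$ together with compactness of the sublevel sets of $G\circ H$ already suffices for tightness --- one does not need $G(H)$ to dominate $H$ or to be proper in any stronger sense than the stated compactness.
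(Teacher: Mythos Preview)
Your proof is correct and follows essentially the same Krylov--Bogolyubov argument as the paper: Ces\`aro-average the laws $\delta_{x_0}\mathcal{P}_k$, use the telescoped Lyapunov estimate together with Markov's inequality to obtain tightness via the compact sublevel sets of $G\circ H$, and then invoke the Feller property plus the identity $\mu_N\mathcal{P}-\mu_N=\tfrac1N(\delta_{x_0}\mathcal{P}_N-\delta_{x_0})$ to show the weak limit is stationary. Your write-up is in fact slightly more careful than the paper's sketch in that you explicitly justify the a priori finiteness of $\mathbf{E}_{x_0}[H(X_n)]$ needed for the telescoping step.
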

\begin{remark}
The result above remains true if we replace~\eqref{eq:LyapGen} with the perhaps more familiar  bound
\begin{align}
\label{eqn:KBbound}
\mathcal{P}H - H \leq -\widetilde{H}+f
\end{align}
where $\widetilde{H}: \mathbf{X}\rightarrow [1, \infty)$ is any measurable function such that $\{ x\in \mathbf{X} \, : \, \widetilde{H}(x) \leq R \}$ is compact for every $R\geq R_*>0$, for some $R_*>0$.  In other words, the righthand side of~\eqref{eq:LyapGen} need not be a function of $H$, but we will use this specific form several times below.  Weaker bounds than~\eqref{eqn:KBbound} are sometimes used to show the existence of a stationary distribution, but they usually follow the Hasminskii cycle construction which is different than the Krylov-Bogolyubov existence method used below.  See, for example,~\cite{RB_06}.    
\end{remark}
\begin{proof}[Proof of Lemma~\ref{lem:KBmeas}]
 Because this result is essentially the well-known Krylov-Bogolyubov
  theorem for Feller Markov semigroups, we sketch the proof and
  refer the reader to \cite{DaP_06, Khas_12, MT_12} for further details.
Fix $x\in \mathbf{X}$ and define the sequence of $\mathcal{B}$ probability measures $(\mu_n)$ by
  \begin{align*}
    \mu_n(B) = \frac1n \sum_{\ell=0}^{n-1} \PP_x( X_{\ell } \in B) , \,\, n\geq 1, \, B\in \mathcal{B}.
  \end{align*}
  Using~\eqref{eq:LyapGen} and the Markov property we find that
  \begin{align*}
    \sum_{\ell=0}^{n-1} \mathbf{E}_x G(H(X_{\ell })) \leq H(x) - \mathcal{P}_{n} H(x)
    + n f \leq  H(x)   + n f \,.
  \end{align*}
  Hence for any $R>0$,
  \begin{align*}
    \mu_n ( x : G(H(x)) > R) =  \frac1n \sum_{\ell=0}^{n-1}  \mathbf{P}_x(
    G(H(X_{\ell })) >R) \leq  \frac1n \sum_{\ell=0}^{n-1}  \frac{\mathbf{E}_x
    G(H(X_{\ell }))}{R} \leq \frac{H(x) + f}{R},
  \end{align*}
  which shows that the family of measures $( \mu_n)$ is tight since $\{ x\, : \, G(H(x)) \leq R\}$ is compact by hypothesis. Let
  $\mu$ be a weak limit point of this family and, when they exist, let $\mu_n(\phi), \mu_n \mathcal{P}(\phi)$ and $\mu(\phi)$ denote the expectations of $\mathcal{B}$-measurable $\phi:\mathbf{X}\rightarrow \R$ with respect to $\mu_n$, $\mu_n \mathcal{P}$ and $\mu$.  To see that
  $\mu$ is stationary, observe that for any bounded, $\mathcal{B}$-measurable $\phi: \mathbf{X}\rightarrow \R$ we have by definition of $\mu_n$
  \begin{align*}
  | \mu_n \mathcal{P}(\phi) - \mu_n(\phi) |= \frac1n \big| \mathbf{E}_x\phi( X_{n} ) 
    - \phi(x)\big| \rightarrow 0
  \end{align*}
  as $n\rightarrow \infty$. 
   Hence if $(\mu_{n_k})$ is a subsequence converging weakly to $\mu$, then the Fubini-Tonelli theorem, the Feller property and weak convergence implies
  \begin{align*}
 \mu\mathcal{P}(\phi) = \mu (\mathcal{P}\phi)= \lim_{k\rightarrow \infty}  \mu_{n_k}(\mathcal{P}\phi) = \lim_{k\rightarrow \infty} (\mu_{n_k}\mathcal{P}(\phi) - \mu_{n_k}(\phi)) +  \lim_{k\rightarrow \infty} \mu_{n_k} (\phi) =\mu(\phi).
  \end{align*}
Thus, $\mu \mathcal{P}(\phi)  = \mu(\phi)$ for all bounded, continuous $\phi: \mathbf{X}\rightarrow \R$, finishing the proof.

\end{proof}

\begin{remark}
 Suppose that~\eqref{eq:LyapGen} is satisfied and $G(t)\rightarrow \infty$ as $t\rightarrow \infty$.  Then for any $\epsilon \in (0,1)$ there exists $R>0$ large enough so that the global bound holds
  \begin{align*}
     \mathcal{P} H - H \leq -
    (1-\epsilon) G(H) + (1+\epsilon) f\one_{\{H \leq R\}}.  
  \end{align*}
  Thus under this hypothesis on $G$, the bound~\eqref{eq:LyapGen} is equivalent to the bound
  \begin{align}\label{eq:LyapGen2}
    \mathcal{P} H - H \leq - G(H) +  f\one_{\{H \leq R\}}
      \end{align}
     provided the terms $G$ and $f$ on the righthand side of~\eqref{eq:LyapGen2} are redefined accordingly.  
\end{remark}

\subsection{Return Times and Convergence}
Let $R, f\geq 0$ and suppose that~\eqref{eq:LyapGen2} is satisfied for some $H: \mathbf{X}\rightarrow [1, \infty)$ measurable and some concave nondecreasing differentiable function $G:[1, \infty)\rightarrow (0, \infty)$.  We next consider implications of this estimate for return times of the Markov chain $( X_n )$ to the set\begin{align}
H_{\leq R}=\{ x\in \mathbf{X} \, : \, H(x) \leq R\}. 
\end{align}    
Let
\begin{align}
T_{R}= \min\{ n \geq 0 \, : \, H(X_n) \leq R\}
\end{align}
and note that in the special case when $G(t)=\alpha t$ for some constant $\alpha>0$, relation~\eqref{eq:LyapGen2} implies 
\begin{align}
\label{eqn:expmom}
\E_x e^{\alpha T_R}  <\infty \,\, \text{ for all } \,\, x\in \mathbf{X}. 
\end{align}
This fact is well-known.  However, when $G$ is a general concave, nondecreasing function, the corresponding form of the moment bound changes.        
 
To describe how the estimates change in this more general context, define $K:[1, \infty) \rightarrow [0, \infty)$ by 
\begin{align}
\label{def:K}
  K(t)=\int_1^t \frac{1}{G(s)} \, ds.
\end{align}
Since $G$ is assumed concave, we have  that 
\begin{align*}
G(t)\leq G(1) + G'(1)(t-1).  
\end{align*}
This fact, along with the condition that $G$ is nondecreasing and
positive, implies that $K$ is strictly increasing and $K(t)
\rightarrow \infty$ as $t\rightarrow \infty$.  In particular, $K$ has
inverse function $K^{-1}:[0, \infty) \rightarrow [1, \infty)$ with
$K^{-1}(0)=1$ which is also strictly increasing.  Furthermore, $K$ itself is concave as $G$ is nondecreasing.  Define $r:[0, \infty)\rightarrow (0, \infty)$ by
\begin{align}
\label{def:r}
&r(t) := (K^{-1})'(t)= G(K^{-1}(t)).
\end{align}
It is worth noting that the properties of $r$ and $K^{-1}$ imply that
$K^{-1}(n)$, $n\in \Z_{\geq 1}$, is comparable to $\sum_{k=1}^n r(k)$.  In particular, it holds that 
\begin{align}
\label{eqn:Krcomp}
K^{-1}(n)-1 = \int_0^n r(s) \, ds  \leq \sum_{k=1}^n r(k).  
\end{align}

The following result also holds:
\begin{theorem}
\label{thm:moments}
Let $R, f \geq 0$ and suppose that the estimate~\eqref{eq:LyapGen2} holds for some  measurable $H:\mathbf{X}\rightarrow [1, \infty)$ and some concave nondecreasing differentiable function $G:[1, \infty)\rightarrow (0, \infty)$.  Then for all $x\notin H_{\leq R}$
\begin{align}
\label{eqn:timemom}
 \E_x \bigg[\sum_{k=0}^{T_{R} -1} r(k)\bigg] \leq H(x) 
\end{align}
and for all $x\notin H_{\leq R}$, $n\geq 0$
\begin{align}
\label{eqn:tails}
  \P_x( T_R \geq n+1) \leq  \frac{H(x)+1 }{ K^{-1}(n)}.   
\end{align}
\end{theorem}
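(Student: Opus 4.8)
The plan is to deduce both estimates from a single nonnegative supermartingale built out of the time-change $K$. Write $\tau=T_R$ and let $(\mathcal F_n)$ be the natural filtration of $(X_n)$. Outside $H_{\leq R}$ the bound \eqref{eq:LyapGen2} reads $\mathcal PH(x)\leq H(x)-G(H(x))$; since $H\geq 1$ we also have $\mathcal PH(x)\geq 1$, so $H(x)-G(H(x))\geq 1$ there. I will need two elementary analytic facts. First, concavity of $K$ (recorded in the excerpt) together with $K'=1/G$ gives, whenever $h-G(h)\geq 1$,
\[
K\big(h-G(h)\big)\leq K(h)+K'(h)\big(-G(h)\big)=K(h)-1 .
\]
Second, for each fixed $c\geq 0$ the function $\Psi_c(h):=K^{-1}\big(c+K(h)\big)$ is nondecreasing and \emph{concave} on $[1,\infty)$: differentiating, $\Psi_c'(h)=(K^{-1})'(c+K(h))\,K'(h)=G\big(K^{-1}(c+K(h))\big)/G(h)$, and since $K^{-1}(c+K(h))\geq h$ and $G'$ is nonincreasing ($G$ concave), one finds $\Psi_c'$ nonincreasing.

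With these in hand, fix $x\notin H_{\leq R}$ and set
\[
S_n:=K^{-1}\big((n\wedge\tau)+K(H(X_{n\wedge\tau}))\big)=\Psi_{n\wedge\tau}\big(H(X_{n\wedge\tau})\big),\qquad n\geq 0 .
\]
I claim $(S_n)$ is a nonnegative $(\mathcal F_n)$-supermartingale with $S_0=K^{-1}(K(H(x)))=H(x)$. On $\{n\geq\tau\}$ we have $S_{n+1}=S_n$. On $\{n<\tau\}$ we have $X_n\notin H_{\leq R}$ and $(n+1)\wedge\tau=n+1$, so with $h=H(X_n)$, conditional Jensen (concavity of $\Psi_{n+1}$), monotonicity of $\Psi_{n+1}$ and of $K^{-1}$, the drift bound $\mathcal PH(X_n)\leq h-G(h)$, and the first fact above give
\[
\E\big[S_{n+1}\mid\mathcal F_n\big]\leq\Psi_{n+1}\big(h-G(h)\big)=K^{-1}\big(n+1+K(h-G(h))\big)\leq K^{-1}\big(n+K(h)\big)=S_n .
\]
Hence $\E_x S_n\leq\E_x S_0=H(x)$ for every $n$.

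The two claims now follow by Riemann-sum comparisons. Because $K(H(X_{n\wedge\tau}))\geq K(1)=0$ and $r=(K^{-1})'$ is nondecreasing, $S_n\geq K^{-1}(n\wedge\tau)=1+\int_0^{n\wedge\tau}r(s)\,ds\geq 1+\sum_{k=0}^{(n\wedge\tau)-1}r(k)$; taking $\E_x$ and letting $n\to\infty$ by monotone convergence yields \eqref{eqn:timemom} (and in particular $\tau<\infty$ a.s.). For \eqref{eqn:tails}, on $\{T_R\geq n+1\}=\{\tau>n\}$ we have $n\wedge\tau=n$, hence $S_n=K^{-1}(n+K(H(X_n)))\geq K^{-1}(n)$, so $K^{-1}(n)\,\P_x(T_R\geq n+1)\leq\E_x S_n\leq H(x)\leq H(x)+1$, and dividing by $K^{-1}(n)>0$ gives the stated bound.

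The only genuinely delicate point is the concavity of $h\mapsto K^{-1}(c+K(h))$, which is what makes the shifted quantity $S_n$ a supermartingale rather than merely a submartingale; everything else is a routine application of conditional Jensen and the identity $K'=1/G$. One also has to keep track of domains — all arguments of $K$ and $\Psi_c$ must lie in $[1,\infty)$ — and this is exactly where the lower bound $\mathcal PH\geq 1$ forced by $H\geq 1$ is used, guaranteeing $h-G(h)\geq 1$ before the first analytic fact is applied.
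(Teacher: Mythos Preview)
Your argument is correct. The paper itself does not give a proof here: it simply cites \cite{RGMP_04,TT_94} for \eqref{eqn:timemom} and says \eqref{eqn:tails} follows from \eqref{eqn:timemom} via the Riemann-sum comparison \eqref{eqn:Krcomp}. Your supermartingale $S_n=K^{-1}\big((n\wedge\tau)+K(H(X_{n\wedge\tau}))\big)$ is precisely the time-change construction underlying those references, so you have supplied a self-contained version of the cited argument. The concavity of $\Psi_c(h)=K^{-1}(c+K(h))$ is the crux, and your justification is right: writing $u=K^{-1}(c+K(h))\geq h$ one gets $\Psi_c'(h)=G(u)/G(h)$ and hence $\Psi_c''(h)=G(u)\big(G'(u)-G'(h)\big)/G(h)^2\leq 0$ by concavity of $G$.

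The one small methodological difference is in \eqref{eqn:tails}: the paper's route is Markov's inequality applied to \eqref{eqn:timemom} together with $\sum_{k=0}^{n}r(k)\geq K^{-1}(n)-1$, whereas you read off $S_n\geq K^{-1}(n)$ directly on $\{\tau>n\}$. Your route is marginally cleaner (it even gives $H(x)/K^{-1}(n)$ rather than $(H(x)+1)/K^{-1}(n)$), but both are equivalent in spirit. Your bookkeeping on domains---using $H\geq1$ to force $h-G(h)\geq1$ before applying $K$---is exactly the point one must watch, and you handle it correctly.
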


The bound~\eqref{eqn:timemom} follows directly from~\cite{RGMP_04, TT_94} while relation~\eqref{eqn:tails} follows from~\eqref{eqn:timemom} using~\eqref{eqn:Krcomp}.  Furthermore, if in addition to the conclusions of Theorem~\ref{thm:moments} one has a minorization condition of the form
\begin{align*}
\inf_{x\in H_{\leq R'}} \mathcal{P}(x,\;\cdot\;) \geq c \lambda(\;\cdot\;)
\end{align*}
for some constant $c>0$, $R'>2R$ and $\mathcal{B}$-probability measure $\lambda$, then one can prove that (see~\cite{RGMP_04, TT_94})
$\mathcal{P}_n(x,\;\cdot\;)$ converges in total variation to the unique stationary distribution of the chain as $n\rightarrow \infty$ at a rate proportional to $1/r(n)$. In Example~\ref{ex:normal} and Example~\ref{ex:stretch} below, this will in turn correspond to exponential, polynomial and stretched exponential rates of convergence to equilibrium, depending on the structure of $G$.  This fits with the intuition that one mechanism slowing
convergence to a subexponential rate is slow return times to the
minorizing set $H_{\leq R}$.  See also~\cite{DFG_09, Hairer_10} for subgeometric rates of convergence for Markov processes in the continuous-time setting.

\begin{example}
\label{ex:normal}
Suppose that $G(t)=\alpha t^a$ for some $\alpha>0$ and $a \in (0,1]$.  Thus $G$ is strictly increasing and concave on $[1, \infty)$ and $K$, $K^{-1}$ are given by \begin{align*}
K(t) = \begin{cases}
\frac{1}{\alpha} \log(t) & \text{ if }a =1\\
\frac{ t^{1-a}-1}{\alpha(1-a)} & \text{ if } a \in (0, 1) 
\end{cases} \quad \text{ and } \quad K^{-1}(t) = \begin{cases}
e^{\alpha t} & \text{ if } a =1\\
(\alpha (1-a)t +1)^{1/(1-a)} & \text{ if } a \in (0, 1) 
\end{cases} .
\end{align*}    
Furthermore, 
\begin{align*}
r(t) = \begin{cases}
\alpha e^{\alpha t} & \text{ if } \alpha =1\\
\alpha(\alpha(1-a)t +1 )^{\frac{a}{1-a}}& \text{ if } a \in (0,1)
\end{cases}.
\end{align*}
In particular, in the case when $a=1$, the bound~\eqref{eqn:timemom} recovers~\eqref{eqn:expmom}. 
\end{example}

The next example will be particularly important in this paper. 
\begin{example}
\label{ex:stretch}
Let $\alpha>0$ and $G_0:(1, \infty) \rightarrow (0, \infty)$ be given by 
\begin{align*}
G_0(t) =\frac{ \alpha t}{\log(t)}
\end{align*}
and note that $G_0$ is nondecreasing and concave for $t\geq e^2$.  For $t\geq 1$, we define $G(t) =G_0(t+e^2)$ and observe that 
\begin{align*}
K(t) = \int_1^t \frac{1}{G(s)}\, ds = \int_{1+e^2}^{t+e^2} \frac{1}{G_0(s)} \, ds = \frac{1}{2\alpha}\bigg[(\log(t+e^2))^{2} - (\log(1+e^2))^{2} \bigg],\\
\end{align*}   
and it follows that  
\begin{align*}
K^{-1}(t)& = \exp\big((2\alpha t +[\log(1+e^2)]^{2})^{\frac{1}{2}}\big)-e^2,\\
r(t) &= \alpha \frac{\exp\big((2\alpha t +[\log(1+e^2)]^{2})^{\frac{1}{2}}\big)}{(2\alpha t +[\log(1+e^2)]^{2})^{\frac{1}{2}}}.
\end{align*}
\end{example}

\section{Random Splittings}
\label{sec:notation}

We now specialize to the setting of a discrete-time Markov chain $(X_n)$ with state space $(\mathbf{X}, \rho)=(\R^d, | \cdot  |)$ where $| \cdot |$ is the usual Euclidean distance.  In order to describe the specific Markov chains to be studied in the rest of the paper, we first fix some notation convenient to this setting.      

\subsection{Notation} For $x,y\in \R^d$, we use $\langle x  , y\rangle$ to denote the standard inner product between $x$ and $y$ so that $|x|=\sqrt{\langle x,x \rangle}$.  The symbol $\mathscr{V}_d$ denotes the set of complete, $C^\infty$ vector fields on $\R^d$; that is, $\mathscr{V}_d$ denotes the set of $C^\infty$ vector fields whose flows are define globally in time (on $\R$) and space (on $\R^d$).  For any $V\in \mathscr{V}_d$, we let $(t,x) \mapsto \varphi_t^V(x):\R\times \R^d\rightarrow \R^d$ denote the flow map associated to the differential equation 
 \begin{align}
 \label{eqn:genODE}
 \dot{x}_t= V(x_t).
 \end{align}
That is, for $(t,x) \in \R \times \R^d$, $\varphi_t^V(x)$ denotes the solution of~\eqref{eqn:genODE} at time $t$ with initial condition $x_0=x\in \R^d$.  We emphasize that since $V$ is complete, the mapping $(t, x)\mapsto \varphi_t(x)$ is defined for all $t\in \R$ and $x\in \R^d$. For any $V\in \mathscr{V}_d$, we say that a finite collection $\mathscr{S}=\{ V_1, V_2, \ldots, V_m\}\subset \mathscr{V}_d$
is a \emph{splitting} of $V$ provided 
\begin{align*}
\sum_{i=1}^m V_i = V.
\end{align*}
  We will use $S_m$ to denote the set of permutations of $\{1,2, \ldots, m\}$.  Given a splitting $\mathscr{S}=\{V_1, \ldots, V_m \} \subset \mathscr{V}_d$ of $V\in \mathscr{V}_d$, we will often use $\varphi_t^i(x)$, $x\in\R^d$, to denote $\varphi_t^{V_i}(x)$.  In this setting, for any $\sigma = (\sigma_1, \ldots,\sigma_m ) \in S_m$, $x\in \R^d$ and $t=(t_1,\ldots, t_m) \in [0,\infty)^m$ we let 
\begin{align}
\label{def:phisig}
\Phi^\sigma_t(x):= \varphi_{t_m}^{\sigma_m} \circ \cdots \circ \varphi_{t_1}^{\sigma_1}(x)  
\end{align}    
where $\circ$ denotes composition.  
In words, $\Phi^\sigma_t(x)$ is the result of flowing along the vector fields in the splitting $\mathscr{S}$ in the order prescribed by the permutation $\sigma$ starting from $x\in \R^d$, with corresponding times spent along each trajectory given by $t=(t_1, t_2, \ldots, t_m)$.  For $m\in \N$ as in $\mathscr{S}= \{ V_1, V_2, \ldots, V_m\}$, we let 
\begin{align}
u_n=(u_{1n}, u_{2n}, \ldots, u_{mn}), \,\,\, n\in \Z_{\geq 0}, 
\end{align}
denote a countably infinite collection of independent, uniformly distributed random variables on $S_m$.  Also, let    
\begin{align}
\label{def:tau}
 \tau_n= (\tau_{1n}, \tau_{2n}, \ldots, \tau_{mn} ), \,\,\, n \in \Z_{\geq 0},
\end{align}
be an independent collection of random vectors such that for each $i$ and $j$, $\tau_{ij}\sim \exp(1/h)$.  We furthermore suppose that the collection $\{ \tau_n \, : \, n \in \Z_{\geq 0}\}$ is independent of the collection $\{ u_n \, : \, n \in \Z_{\geq 0}\}$.  Throughout the paper, we let $\tau$ be a generic  $\exp(1/h)$ distributed random variable.

\subsection{Random Splittings}
\label{sec:gen}

We can construct a discrete-time Markov chain $(X_n)$ with state space $\R^d$ associated to the splitting $\mathscr{S}$ inductively by 
\begin{align}
\label{def:MC}
X_{n+1} = \Phi^{u_{n}}_{\tau_{n}} \circ X_n  = \varphi^{u_{mn}}_{\tau_{mn}} \circ \cdots \circ \varphi^{u_{1n}}_{\tau_{1n}} \circ X_n. 
\end{align} 
That is, starting from state $X_n$, the state $X_{n+1}$ is determined by selecting a uniformly distributed ordering of the vector fields in $\mathscr{S}$ and then flowing along each of these in the selected order for an independent, $\exp(1/h)$ amount of time.  We call the Markov chain $( X_n)$ the \emph{random splitting} associated to $\mathscr{S}$. 
For notational convenience, we will also make use of the random variables $X_{n,k}$, $n\geq 0$, $k=0, 1,2,\ldots, m$, defined by 
\begin{align*}
X_{n,k}=
\begin{cases}
X_n & \text{ if }k=0, n\geq 0\\
\varphi^{u_{kn}}_{\tau_{kn}} \circ \cdots \circ \varphi^{u_{1n}}_{\tau_{1n}} \circ X_n, & \text{ if } k=1,2, \ldots, m, \,\, n \geq 0.
\end{cases}
\end{align*}
Note that the stochastic sequence $(X_{n,k})_{k=0}^{m}$ keeps track of
the dynamics between steps $n$ and $n+1$ of the random splitting.  The
sequence $(X_{n,k})$ will be referred to as the \emph{random dynamics}
associated to the splitting $\mathscr{S}$. Dynamics generated by a
random splitting can be understood as a \emph{random dynamical system} \cite{Arnold} or a collection of random maps~\cite{Kifer}. We use the
term ``random splitting" to emphasize the origin of the random maps and
because we will vary the time step parameter $h$.  The concept of a random splitting is also closely related to piecewise
deterministic Markov processes (PDMPs)~\cite{Bakhtin,PDMP, Davis}.

\subsubsection{Random splitting of Lorenz '96}  
\label{sec:lorint}
As a non-trivial, illustrative example in this section, we consider a splitting of the forced and partially damped Lorenz '96 equation on $\R^d$
\begin{align}
\label{eqn:Lor96}
\dot{x}_i = (x_{i+1}-x_{i-2})x_{i-1}  - \delta_{1i} x_i  + \beta_i  
\end{align}
where $i=1,\ldots, d$, $d\geq 4$.  The indices in equation~\eqref{eqn:Lor96} live on the discrete circle so that, in particular, $x_{d+1}\equiv x_{1}$, $x_{0}\equiv x_{d}$, and $x_{-1}\equiv x_{d-1}$.  We assume that the system~\eqref{eqn:Lor96} is forced externally by real-valued constants $\beta_i\neq 0$, $i=1,\ldots, d$, and that dissipation enters through the first mode only; that is, $\delta_{1i}=1$ if $i=1\, (\text{mod} \,d)$ and $\delta_{1i}=0$ otherwise.

To define the splitting, let $\{e_i\}_{i=1}^{d}$ denote the standard orthornormal basis of $\R^d$ and introduce the following vector fields on $\R^d$\begin{align}
\label{eqn:vf1}
&V_{i}(x) =x_{i+1} x_{i-1} e_i - x_i x_{i-1} e_{i+1},\,\, i=1,\ldots, d, \qquad V_{\star}(x) =  -x_1 e_1 + \sum_{i=1}^d \beta_i  e_i ,\\
\label{eqn:vf2}&\qquad \qquad \qquad V(x)= \sum_{i=1}^d [ (x_{i+1}-x_{i-2})x_{i-1}  - \delta_{1i} x_i  + \beta_i ] e_i. 
\end{align}
One can show that each of the vector fields in~\eqref{eqn:vf1}-\eqref{eqn:vf2} belongs to $\mathscr{V}_d$ since the flows along each $V_i$, $i=1,2,\ldots,d$, conserve the Euclidean length $|\cdot|$.  Furthermore, $\mathscr{S}:= \{ V_1, \ldots, V_d,  V_{\star} \}$ forms a splitting of $V\in \mathscr{V}_d$ since
\begin{align}
V(x)=V_\star(x)+ \sum_{i=1}^{d} V_i(x). 
\end{align}

Let $(X_n)$ denote the random splitting associated to $\mathscr{S}$ and $(\mathcal{P}_n)$ be the associated Markov semigroup.  If $H:\R^d\rightarrow [1, \infty)$ is given by 
\begin{align}
\label{eqn:Lor96H}
H(x)= |x| +1,
\end{align} 
one of our main goals below will be to prove the following result. 
\begin{theorem}
\label{thm:main1}
For every $h>0$ small enough but independent of the dimension $d$, there exist constants $\alpha=\alpha(d,h) \in (0, 1)$ and $f=f(d,h) >0$ such that the bound is satisfied
\begin{align}
\label{eqn:bound}
\mathcal{P}_{d+1} H(x) \leq \alpha H(x) + f
\end{align}
for all $x\in \R^d$. 
\end{theorem}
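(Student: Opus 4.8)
The plan is to verify the hypotheses of Proposition~\ref{prop:Lyop} for the sampled Markov chain whose one‑step transition kernel is $\mathcal{P}_{d+1}$, with $H$ as in~\eqref{eqn:Lor96H}, with the constant choice $p\equiv p_0=p_0(d,h)\in(0,1]$, and with some $\alpha_0=\alpha_0(d,h)\in(0,1)$ and $f_1=f_1(d,h)\ge0$, $f_2=f_2(d,h)\ge0$; the bound~\eqref{eqn:bound} then follows with $\alpha=1-(1-\alpha_0)p_0\in(0,1)$ and $f=f_1+f_2$. Two structural facts are used throughout. First, every $V_i$ in~\eqref{eqn:vf1} preserves the Euclidean norm: along $\varphi_t^{i}$ the coordinate $x_{i-1}$ and all coordinates outside $\{i,i+1\}$ are frozen, while $(x_i,x_{i+1})$ rotates about the origin through the signed angle $-x_{i-1}t$. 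Second, along $\varphi_t^{V_\star}$ one has $x_1\mapsto\beta_1+(x_1-\beta_1)e^{-t}$ and $x_i\mapsto x_i+\beta_i t$ for $i\neq1$, so that $\tfrac{d}{dt}|x|\le|\beta|$ along this flow; consequently, over $d+1$ steps of the chain, $|X_{d+1}|\le|x|+|\beta|T_\star$, where $T_\star$ is the total time spent flowing along $V_\star$, a sum of $d+1$ independent $\exp(1/h)$ random variables with mean $(d+1)h$.

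Given these, the bad‑event bound~\eqref{eqn:subconest} is automatic for \emph{any} event $A$, since $\E_x\big[H(X_{d+1})\one_{A^c}\big]\le\E_x\big[(H(x)+|\beta|T_\star)\one_{A^c}\big]\le H(x)\P_x(A^c)+f_2$ with $f_2=(d+1)|\beta|h$. The content is the construction of the good event $A=A(x)$ realizing transport of energy into the damped first coordinate. I would take $A$ to be the following intersection of constraints over the $d+1$ sweeps. In sweep $1$: the permutation $u_0$ places $V_\star$ \emph{last}, and the holding time of that $V_\star$‑flow lies in a subset of $[h/2,h]$ of measure at least $h/4$ on which every coordinate of $X_1$ that will serve as a rotation controller in sweep $2$ has modulus at least $c_\ast:=\beta_{\min}h/(8d)$, where $\beta_{\min}:=\min_i|\beta_i|>0$; such a subset exists because, $V_\star$ acting last, each such coordinate has the form $y+\beta_i t$ for a fixed $y$ (its value right before the $V_\star$‑flow), which vanishes for at most one $t$, and there are at most $d$ constraints. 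In sweep $2$: with $j=j(X_1)$ an index satisfying $(X_1)_j^2\ge|X_1|^2/d$, the permutation $u_1$ begins with the block $V_{j-1},V_{j-2},\dots,V_1,V_\star$ in exactly that order; the holding time of each $V_k$ in the block lies in a set for which the planar rotation it generates moves at least half of the current ``bulk'' amplitude into coordinate $k$ (and, when $j=d$, additionally leaves $|x_d|\ge c_\ast$); and the holding time of the trailing $V_\star$ lies in $[1,2]$. Sweeps $3,\dots,d+1$ are unconstrained.

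Each of these constraints has probability bounded below uniformly in $x$. The two permutation constraints cost a factor at least $(d+1)^{-(d+1)}$, and each of the finitely many constraints on $V_\star$‑holding times costs a factor at least an absolute constant times $e^{-2/h}$. For a block rotation $\varphi_t^{V_k}$ with $t\sim\exp(1/h)$, the rotation sends $(x_k,x_{k+1})$ to $(x_k^{\mathrm{new}},\,\cdot\,)$ with $x_k^{\mathrm{new}}=\sqrt{x_k^2+x_{k+1}^2}\,\cos(x_{k-1}t-\psi)$ for some phase $\psi$, so that $|x_k^{\mathrm{new}}|\ge|x_{k+1}|/2$ whenever $|\cos(x_{k-1}t-\psi)|\ge\tfrac12$ (using $\sqrt{x_k^2+x_{k+1}^2}\ge|x_{k+1}|$); since $|x_{k-1}|\ge c_\ast$, the $\exp(1/h)$‑probability of this event is at least $\min\{e^{-1}/2,\;e^{-Cd/(\beta_{\min}h^2)}\}$ for an absolute constant $C$, obtained by treating separately the ``fast controller'' case (good angles then fill a fixed density of a $t$‑scale far below $h$) and the ``slow controller'' case (the first long good arc of $t$ begins within distance $2\pi/(3c_\ast)$ of the origin). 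Multiplying the finitely many factors gives $\P_x(A)\ge p_0(d,h)>0$.

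Finally, on $A$ one tracks the bulk amplitude through the leading block of sweep $2$: each successive rotation carries at least half of it one coordinate nearer the first, so at least a $2^{-(j-1)}\ge2^{-(d-1)}$ fraction of $|(X_1)_j|$ arrives in the first coordinate (the sole delicacy is $V_1$'s controller $x_0\equiv x_d$, which $V_{d-1}$ depletes when $j=d$, handled by the extra constraint above), whence just before the trailing $V_\star$ one has $|x_1|\ge c(d,h)\,|x|-C(d,h)$ with $c(d,h)>0$; the $\exp(1/h)\in[1,2]$ holding time of that $V_\star$ then removes a definite fraction of the first‑coordinate energy, while the remaining flows of sweep $2$ preserve $|\cdot|$ and the forcing in that $V_\star$‑flow adds at most $C(d,h)(1+|x|)$ to $|x|^2$. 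A short computation then gives $|X_2|^2\le(1-\delta)|x|^2+C(d,h)$ for $|x|$ large, with $\delta=\delta(d,h)>0$ of order $(1-e^{-2})\,4^{-d}/d$, while $|X_2|$ is bounded on $A$ when $|x|$ is bounded; hence $H(X_2)\le\alpha_0 H(x)+C(d,h)$ on $A$ with $\alpha_0=\sqrt{1-\delta}\in(0,1)$, and adding back the idle sweeps $3,\dots,d+1$ (contributing at most $|\beta|$ times a total of mean $(d-1)h$) yields~\eqref{eqn:PDest} for $X_{d+1}$ with $f_1=C(d,h)+(d-1)|\beta|h$. The main obstacle is exactly the content of these last two paragraphs: the uniform‑in‑$x$ lower bounds on the ``good rotation angle'' probabilities, and the verification that the transported fraction and all controllers stay bounded below by constants independent of $x$; with those in hand, the theorem follows by assembling the pieces through Proposition~\ref{prop:Lyop} together with the elementary growth bound.
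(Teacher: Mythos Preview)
Your approach is correct and takes a genuinely different route from the paper's. The paper establishes Theorem~\ref{thm:main1} by first invoking the abstract framework of subconservation and partial dissipation (Examples~\ref{ex:lor1} and~\ref{ex:lor2}) together with Theorem~\ref{thm:check}, reducing everything to the entrance--probability bound of Theorem~\ref{thm:lblor}. That bound is then proved by a \emph{region--by--region} induction: one defines nested regions $U_1,\dots,U_d$ and shows that from $U_j$ the chain enters $U_{j-1}$ in a \emph{single sweep} with positive probability, using three modular lemmas (downward energy transfer via one rotation, Lemma~\ref{lem:down}; speed generation via one application of $V_\star$, Lemma~\ref{lem:speed}; and energy maintenance through the remaining flows, Lemma~\ref{lem:energysame}). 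Thus the paper transports energy \emph{one mode per sweep}, taking up to $d$ sweeps before the dissipative $V_\star$ acts --- this is where the exponent $d+1$ in $\mathcal{P}_{d+1}$ comes from.

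Your argument instead accomplishes the entire transport in one sweep: sweep~1 is used only to make all potential rotation controllers simultaneously nonzero (placing $V_\star$ last and choosing its holding time in a good subset of $[h/2,h]$), and then sweep~2 cascades $V_{j-1},\dots,V_1,V_\star$ in one long block; sweeps $3,\dots,d+1$ are idle. This bypasses the partial--dissipation machinery of Section~\ref{sec:pd} and Theorem~\ref{thm:check} entirely, going straight to Proposition~\ref{prop:Lyop}. The paper's route buys modularity --- each lemma isolates one mechanism, and the same framework is reused verbatim for the Euler splitting in Section~\ref{sec:NSE} --- while yours is more direct and in fact yields the stronger statement that two sweeps already suffice for contraction (your bound holds for $\mathcal{P}_2$, not just $\mathcal{P}_{d+1}$). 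The trade--off is that your probability constant $p_0$ is worse (one permutation constraint of length $\sim d$ instead of $d$ constraints of length two), and the ``sole delicacy'' you flag for $j=d$ --- guarding the controller $x_d$ during the rotation $V_{d-1}$ --- is precisely the sort of bookkeeping the paper's Lemma~\ref{lem:energysame} handles systematically. One small correction: under $V_\star$ the first coordinate evolves as $\beta_1+(y_1-\beta_1)e^{-t}$ rather than $y_1+\beta_1 t$, so your claim that ``each such coordinate has the form $y+\beta_i t$'' needs a one--line adjustment for $i=1$; the conclusion survives since this function also has at most one zero on $[h/2,h]$ and its derivative is bounded below (or the value is bounded away from zero) there.
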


The proof of Theorem~\ref{thm:main1} will be established partly here in Section~\ref{sec:notation} and partly in Section~\ref{sec:lor} by validating the hypotheses of Proposition~\ref{prop:Lyop} with a constant function $p\equiv p_*>0$.  Surprisingly, even though damping is externally present only on the first mode, it propagates through the random dynamics to produce the globally contractive-type estimate~\eqref{eqn:bound} for the random splitting.  In this regard, the randomization of the ordering of the vector fields is important in the arguments, as it allows us more easily see how energy is transported from one direction to another using a convenient ordering of the fields.  Such an ordering necessarily happens with positive probability.     

\begin{remark}
We recall that Theorem~\ref{thm:main1} ensures the existence of a stationary distribution for the random splitting and also implies that return times to large compact sets have exponential moments (see Section~\ref{sec:GRPD}).  Note that in previous work~\cite{AMM_23} it was shown that the randomly split dynamics in general converges on finite time windows to the underlying deterministic ODE as the switching rate tends to infinity (equivalently, as the mean step size $h>0$ converges to $0$).  This implies that finite segments of the stochastic trajectory will, as $h\rightarrow 0$, increasingly look like the solution of the deterministic ODE.  Consequently, the random splitting will likely affect the structure of the associated invariant measure since this is ``time infinity" object. Nonetheless, studying the random splitting helps illuminate some of the mechanisms involved with the transfer of energy leading to dissipative effects.     
\end{remark}

\subsection{Section overview}Returning to the setting of a general random splitting, in Section~\ref{sec:subcon}, Section~\ref{sec:pd} and Section~\ref{sec:entrances} we describe the important structural features that produce the bounds required by Proposition~\ref{prop:Lyop}, and in particular those that lead to estimates like~\eqref{eqn:bound}. Namely: 
\begin{itemize}
\item A quantity approximately conserved under each vector field in the splitting, i.e. the function $H$ in this discussion.  This will be further elaborated in Section~\ref{sec:subcon} using the notion of \emph{subconservation}.  Importantly, subconservation produces the bound \eqref{eqn:subconest} for some constant $f_2 \geq 0$, for any event $A\in \mathcal{F}$. 
\item Dissipation present in at least one vector field in the splitting $\mathscr{S}$.  This will be discussed in Section~\ref{sec:pd} below.  This structure allows one to establish a bound of the form~\eqref{eqn:PDest} for a collection of events $A=A(x)$.  These events correspond to certain entrances of the random dynamics to a ``dissipative region" $D$ in space. 
\item Such entrances to the dissipative region $D$ must have sufficient probability so that the estimate $\PP_x(A) \geq p(H(x))$ in Proposition~\ref{prop:Lyop} is satisfied for all $x\in \R^d$ for some ``reasonable" function $p:[1, \infty)\rightarrow (0,1]$.  In the case of the splitting for the Lorenz '96 equation above, we need to be able to take $p$ in Proposition~\ref{prop:Lyop} to be a constant function, i.e. $p\equiv p_*>0$.  This point will be discussed in more detail in Section~\ref{sec:entrances}. 
\end{itemize}

Unless otherwise specified, let $\mathscr{S}= \{ V_1, V_2, \ldots, V_m \}\subset \mathscr{V}_d$ be a splitting of $V\in \mathscr{V}_d$, and let $H:\R^d\rightarrow [1, \infty)$ be measureable.  
 \subsection{Subconservation}
 \label{sec:subcon}

Perhaps the most basic structural feature of the splittings considered in this paper is that each vector field in $\mathscr{S}$ ``approximately conserves" the function $H$.  To be more precise:   
 
 \begin{definition}
\label{def:subcon}
We call a splitting $\mathscr{S}$ \emph{subconservative} with respect to $H$ if for every $i=1,2,\ldots, m$ there exists measurable $F_i :[0, \infty)\rightarrow [0, \infty)$ such that $f_i:= \E F_i (\tau)< \infty$ and 
\begin{align}
\label{eqn:subcon}
H(\varphi_t^i(x) ) \leq H(x)+  F_i  (t)  \end{align}
for all $x\in \R^d$, $t\geq 0$. 
\end{definition}
\begin{remark}
Observe that if a vector field $W\in \mathscr{V}_d$ conserves $H$, then $H( \varphi_t^W(x))= H(x)$ for all $x\in \R^d$, $t\geq 0$.  Thus subconservation is a slight deviation from this concept where the function $H$ along all of the vector fields in the splitting does not grow ``too fast".  \end{remark}

 \begin{example}
 \label{ex:lor1}
 Consider the splitting $\mathscr{S}= \{ V_1, \ldots, V_d, V_\star\}$ of the Lorenz '96 equation~\eqref{eqn:Lor96} introduced in~\eqref{eqn:vf1}, and let $H$ be as in~\eqref{eqn:Lor96H}.  We claim that $\mathscr{S}$ is subconservative with respect to $H$.  To see why, note first that for $i=1,2,\ldots, d$ and $t\geq 0$ 
\begin{align*}
H(\varphi^i_t(x))= H(x)
\end{align*}   
since each of the vector fields $V_1, V_2, \ldots, V_d$ conserves length $|x|$.  In particular, we can take $F_i \equiv 0$ in~\eqref{eqn:subcon}.  On the other hand, letting $\varphi_t^\star(x):= \varphi_t^{V_\star}(x)$ and using the triangle inequality we have  
\begin{align*}
H(\varphi_t^{\star}(x)) &= | ( e^{-t} x_1+ (1-e^{-t}) \beta_1 ,x_2+\beta_2 t, \ldots, x_d+ \beta_d t)| +1 \\
&  \leq H(x) + |(\beta_1, \beta_2, \ldots, \beta_d)| t , \quad t\geq 0. 
\end{align*}
Hence we can take $F_\star(t) = |(\beta_1, \ldots, \beta_d)|t $ as in~\eqref{eqn:subcon} and the claim is established.  

 \end{example}

As a basic consequence of subconservation, we have the following result.  See also~\eqref{eqn:subconest}.   

\begin{proposition}
\label{prop:subcon}
Suppose that $H$ is subconservative for $\mathscr{S}$ and let $(X_n)$ denote the random splitting associated to $\mathscr{S}$.  Then 
\begin{align}
\label{eqn:subconservativebound}
\E_x \Big[\one_A H(X_{n})\Big] \leq \PP_x(A) H(x) + n \sum_{i=1}^m f_i
\end{align}    
for all $x\in \R^d$, $n\geq 0$ and $A\in \mathcal{F}$.  In the above, $f_i=\E F_i(\tau) \geq 0$ is as in~\eqref{eqn:subcon}.  
\end{proposition}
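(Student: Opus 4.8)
The plan is to prove~\eqref{eqn:subconservativebound} by induction on $n$, exploiting the Markov property to peel off one step of the random splitting at a time. The base case $n=0$ is trivial since $X_0 = x$ gives $\E_x[\one_A H(X_0)] = \PP_x(A) H(x)$. For the inductive step, the natural idea is to decompose one step of the chain, $X_1 = \Phi^{u_0}_{\tau_0}(x) = \varphi^{u_{m0}}_{\tau_{m0}} \circ \cdots \circ \varphi^{u_{10}}_{\tau_{10}}(x)$, into its $m$ constituent flows and apply~\eqref{eqn:subcon} repeatedly.

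First I would establish the one-step bound: for any $x \in \R^d$,
\begin{align}
\label{eq:onestepsubcon}
H(X_1) \leq H(x) + \sum_{i=1}^m F_{u_{i0}}(\tau_{i0}) \qquad \text{a.s.}
\end{align}
This follows by telescoping~\eqref{eqn:subcon} along the composition defining $X_1$: writing the intermediate points $X_{0,k} = \varphi^{u_{k0}}_{\tau_{k0}} \circ \cdots \circ \varphi^{u_{10}}_{\tau_{10}}(x)$, subconservation gives $H(X_{0,k}) \leq H(X_{0,k-1}) + F_{u_{k0}}(\tau_{k0})$ for each $k=1,\dots,m$, and summing yields~\eqref{eq:onestepsubcon} since $X_{0,m} = X_1$ and $X_{0,0}=x$. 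Note that~\eqref{eqn:subcon} holds for \emph{every} vector field in $\mathscr{S}$ and every starting point, which is exactly what makes this telescoping legitimate regardless of the (random) permutation $u_0$. Taking expectations and using that $\tau_{i0} \sim \exp(1/h)$ together with the definition $f_i = \E F_i(\tau)$, one gets $\E_x H(X_1) \leq H(x) + \sum_{i=1}^m f_i$; more importantly, for any event $B \in \mathcal{F}$, multiplying~\eqref{eq:onestepsubcon} by $\one_B$ and taking expectations gives $\E_x[\one_B H(X_1)] \leq \PP_x(B) H(x) + \sum_{i=1}^m f_i$ (here one bounds $\E_x[\one_B \sum_i F_{u_{i0}}(\tau_{i0})] \leq \E_x[\sum_i F_{u_{i0}}(\tau_{i0})] = \sum_i f_i$, using that each $F_i \geq 0$ and that $u_{i0}$ is uniform on $S_m$ so that $\E F_{u_{i0}}(\tau_{i0})$ averages the $f_j$'s — in any case the sum over the $m$ slots equals $\sum_{j=1}^m f_j$).

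For the inductive step from $n$ to $n+1$, I would condition on $\mathcal{F}_1 = \sigma(u_0, \tau_0)$ (equivalently on $X_1$) and write
\begin{align*}
\E_x\big[\one_A H(X_{n+1})\big] = \E_x\Big[ \E_x\big[\one_A H(X_{n+1}) \,\big|\, \mathcal{F}_1\big]\Big].
\end{align*}
The subtlety is that $A$ need not be $\mathcal{F}_1$-measurable, so one cannot directly pull $\one_A$ out. Instead, since $F_i \geq 0$, one first uses~\eqref{eq:onestepsubcon} shifted forward: by the Markov property, conditionally on $X_1 = y$, the law of $X_{n+1}$ is that of the $n$-step chain started at $y$, and $H(X_{n+1}) \leq H(y) + \sum_{i=1}^{m}\big(\text{fresh } F\text{ terms}\big) + \cdots$. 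The cleanest route is actually to iterate~\eqref{eq:onestepsubcon} directly: by telescoping across all $n$ steps one obtains $H(X_n) \leq H(x) + \sum_{\ell=0}^{n-1}\sum_{i=1}^m F_{u_{i\ell}}(\tau_{i\ell})$ almost surely. Then $\one_A H(X_n) \leq \one_A H(x) + \sum_{\ell=0}^{n-1}\sum_{i=1}^m F_{u_{i\ell}}(\tau_{i\ell})$, and taking $\E_x$ and using independence of the $(u_\ell,\tau_\ell)$ and $F_i \geq 0$ gives $\E_x[\one_A H(X_n)] \leq \PP_x(A) H(x) + n\sum_{i=1}^m f_i$, which is exactly~\eqref{eqn:subconservativebound}. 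This bypasses the induction entirely. The main (minor) obstacle is the bookkeeping in the telescoping identity across $n$ steps and being careful that one may drop $\one_A$ from the nonnegative remainder term; there is no analytic difficulty, only care with measurability and the nonnegativity of the $F_i$. I would present the telescoping bound on $H(X_n)$ as a displayed lemma-style inequality and then conclude in one line.
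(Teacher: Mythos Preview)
Your proposal is correct and follows essentially the same approach as the paper: telescope the subconservation bound~\eqref{eqn:subcon} along the $m$ flows in each step, use nonnegativity of the $F_i$ to drop $\one_A$ from the remainder term, and iterate. The only organizational difference is that the paper iterates the inequality $\E_x[\one_A H(X_n)] \leq \E_x[\one_A H(X_{n-1})] + \sum_i f_i$ at the level of expectations, whereas you first obtain the almost-sure bound $H(X_n) \leq H(x) + \sum_{\ell=0}^{n-1}\sum_{i=1}^m F_{u_{i\ell}}(\tau_{i\ell})$ and then take a single expectation; both routes are equivalent and equally short.
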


\begin{proof}
The bound~\eqref{eqn:subconservativebound} is trivially satisfied when $n=0$.  Thus suppose $n\geq 1$.  Since $\mathscr{S}$ is subconservative with respect to $H$, let $F_i$ and $f_i$ be as in~\eqref{eqn:subcon} and observe that for any $i \in \{1,2, \ldots, m-1\}$ we have by definition of the random splitting and subconservation
\begin{align}
\label{eqn:sub1} H(X_{n}) = H(X_{n-1,m})&\leq H( X_{n-1, m-i}) + \textstyle{\sum_{k=m-i+1}^m} F_{u_{k (n-1)}}(\tau_{k (n-1)}) \\
\nonumber & \leq H(X_{n-1}) +\textstyle{\sum}_{k=1}^{m}  F_{u_{k(n-1) } }(\tau_{k (n-1) }).  
\end{align}
Hence using independence it follows for any $A\in \mathcal{F}$   
\begin{align}
\label{eqn:sub3}
\E_x [\mathbf{1}_A H(X_{n})  ] \leq \E_x [\mathbf{1}_A H(X_{n-1})  ]+  \textstyle{\sum}_{i=1}^m f_i . 
\end{align}
The bound~\eqref{eqn:subconservativebound} follows by iterating~\eqref{eqn:sub3}.
\end{proof}

\subsection{Partial dissipation}
\label{sec:pd}

We next explore the basic structural feature of the random splittings in this paper which give rise to estimates of the form~\eqref{eqn:PDest}.  Recall that $\mathcal{B}$ denotes the Borel measurable subsets of the state space $\mathbf{X}=\R^d$.
\begin{definition}
\label{def:pd}
Let $D\in \mathcal{B}$ be non-empty.  We call the splitting $\mathscr{S}$ \emph{partially dissipative for} $H$ \emph{on the set} $D$ \emph{with index} $\ell\in \{1,2,\ldots, m\}$ if there exists a measurable function $G_\ell:[0, \infty)\rightarrow [0, \infty)$ with $g_\ell:= \E G_\ell(\tau) < \infty$ and a strictly decreasing function $\alpha_\ell: [0, \infty) \rightarrow (0,1]$  for which the bound holds  
\begin{align}
\label{eqn:pd}
H(\varphi^\ell_t(x)) \leq \alpha_\ell(t) H(x) + G_\ell(t) , \,\,\, x\in D, \,\, t\geq 0.
\end{align}
Notationally, we set $a_\ell = \E \alpha_\ell (\tau)\in (0,1)$.  The set $D$ will be called a \emph{dissipative region} for the random splitting $(X_n)$.  The index $\ell$ will be called a \emph{dissipative direction}.   
\end{definition}

\begin{example} 
\label{ex:lor2}
 Consider again the splitting $\mathscr{S}= \{ V_1, \ldots, V_d, V_\star\}$ of the Lorenz '96 equation~\eqref{eqn:Lor96} introduced in~\eqref{eqn:vf1}, and let $H$ be as in~\eqref{eqn:Lor96H}.  Fixing $\eta \in (0,1)$, we claim that $\mathscr{S}$ is partially dissipative with respect to $H$ on the set 
 \begin{align}
 \label{def:Deta}
 D_\eta= \{x\in \R^n \, : \, |x_1|^2 \geq \eta |x|^2\} 
 \end{align}
 with index $\ell= \star$.  Indeed, if $F_\star(t) = |(\beta_1, \ldots, \beta_d)| t$, then for $t\geq 0$ we have  
\begin{align*}
H(\varphi_t^{\star}(x))   &= | ( e^{-t} x_1+ (1-e^{-t}) \beta_1 ,x_2+\beta_2 t, \ldots, x_d+ \beta_d t)| +1\\
&\leq H( e^{-t} x_1, x_2,\ldots, x_d)  + F_\star(t) \\
& =\sqrt{(e^{-2t}-1) x_1^2+|x|^2} +1+ F_\star(t) \\
& \leq \sqrt{1-\eta(1-e^{-2t})} |x| +1 + F_\star(t)  \leq  \alpha_\star(t) H(x) + F_\star(t) +1
\end{align*}
where $\alpha_\star(t) = \sqrt{1-\eta(1-e^{-2t})}$ is strictly decreasing on $[0, \infty)$ and maps $[0, \infty)$ into $(0,1]$.  This finishes the proof of the claim. 

\end{example}    
  
 Our next result relates partial dissipation and subconservation to an estimate of the form~\eqref{eqn:PDest}.  To state it, recalling that $(X_{n,k})$ denotes the random dynamics associated to the splitting $\mathscr{S}$, for any $D\in \mathcal{B}$, $\ell \in \{1,2,\ldots, m\}$ and $n\geq 0$, let 
 \begin{align*}
 T_{n,D}= \min\big\{ k\in \{ 0,1,\ldots, m-1\} \,: \, X_{n,k} \in D\big\}, 
 \end{align*}
 where we understand $\min \emptyset = \infty$,
 and    
 \begin{align}
 \label{eqn:union}A_{n}^\ell(D) &= \bigcup_{k=0}^{m-1}\Big\{ T_{n,D}=k \Big\} \cap \Big\{ \ell = u_{(k+1)n}   \Big\}=  : \bigcup_{k=0}^{m-1} A_{n,k}^\ell(D).
 \end{align} 
 Note that $A_{n,k}^\ell(D)$ is the event that the random sequence $(X_{n,j})_{j=0}^{m-1}$ first enters the set $D$ after the $k$th flow and the next flow, namely the $(k+1)$st flow, follows the vector field $V_\ell$.  
 \begin{proposition}
 \label{prop:PDest}
Let $D\in \mathcal{B}$ be nonempty and suppose that the splitting $\mathscr{S}$ is subconservative for $H$ and partially dissipative for $H$ on the set $D$ with index $\ell$, and let $a_\ell \in (0,1)$ be as in Definition~\ref{def:pd}.  Then there exists constants $f_{n,k}\geq 0$ such that 
\begin{align}
\label{eqn:PDest1}
\E_x\big[ \one_{A_{n}^\ell(D)} H(X_k)\big] \leq a_\ell \PP_x(A_{n}^\ell(D))  H(x) + f_{n,k}  
\end{align}
for all $x\in \R^d$, $k\geq n+1$. 
 \end{proposition}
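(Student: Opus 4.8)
The plan is to split the event $A_n^\ell(D)$ according to the position at which the random dynamics of block $n$ first lands in $D$, establish the estimate on each piece, and recombine. Recall from \eqref{eqn:union} that $A_n^\ell(D)=\bigsqcup_{j=0}^{m-1}A_{n,j}^\ell(D)$ with $A_{n,j}^\ell(D)=\{T_{n,D}=j\}\cap\{u_{(j+1)n}=\ell\}$, and that these $m$ events are pairwise disjoint. Fix $j$ and work on $A_{n,j}^\ell(D)$. There $X_{n,j}\in D$ and the $(j+1)$st flow of block $n$ is $\varphi^{\ell}_{\tau_{(j+1)n}}$, so partial dissipation \eqref{eqn:pd} gives $H(X_{n,j+1})\le\alpha_\ell(\tau_{(j+1)n})H(X_{n,j})+G_\ell(\tau_{(j+1)n})$. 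Since $k\ge n+1$, the state $X_k$ is obtained from $X_{n,j+1}$ by the remaining $m-j-1$ flows of block $n$ followed by the complete blocks $n+1,\dots,k-1$; iterating subconservation \eqref{eqn:subcon} along all these flows, exactly as in the proof of Proposition~\ref{prop:subcon}, gives $H(X_k)\le H(X_{n,j+1})+\mathcal R_{n,j,k}$, where $\mathcal R_{n,j,k}\ge 0$ is a finite sum of terms $F_i(\tau_{\cdot\cdot})$ over those flows with $\E_x[\mathcal R_{n,j,k}]\le (k-n)\sum_{i=1}^m f_i$. Combining the two displays,
\[
\one_{A_{n,j}^\ell(D)}H(X_k)\le \one_{A_{n,j}^\ell(D)}\,\alpha_\ell(\tau_{(j+1)n})\,H(X_{n,j})+\one_{A_{n,j}^\ell(D)}\,G_\ell(\tau_{(j+1)n})+\mathcal R_{n,j,k}.
\]

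Now take $\E_x$ and exploit the crucial independence. The event $\{T_{n,D}=j\}$, the choice $\{u_{(j+1)n}=\ell\}$, and the state $X_{n,j}$ are measurable with respect to $X_n$, $u_{1n},\dots,u_{(j+1)n}$ and $\tau_{1n},\dots,\tau_{jn}$, hence in particular do not involve the clock $\tau_{(j+1)n}$. Therefore $\alpha_\ell(\tau_{(j+1)n})$ and $G_\ell(\tau_{(j+1)n})$ are independent of the nonnegative variable $\one_{A_{n,j}^\ell(D)}H(X_{n,j})$ and of $\one_{A_{n,j}^\ell(D)}$, so $\E_x[\one_{A_{n,j}^\ell(D)}\alpha_\ell(\tau_{(j+1)n})H(X_{n,j})]=a_\ell\,\E_x[\one_{A_{n,j}^\ell(D)}H(X_{n,j})]$ and $\E_x[\one_{A_{n,j}^\ell(D)}G_\ell(\tau_{(j+1)n})]=g_\ell\,\P_x(A_{n,j}^\ell(D))\le g_\ell$. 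For the remaining expectation, subconservation along the first $j$ flows of block $n$ gives $H(X_{n,j})\le H(X_n)+\sum_{i=1}^j F_{u_{in}}(\tau_{in})$, so, since $A_{n,j}^\ell(D)\in\mathcal F$, Proposition~\ref{prop:subcon} applied with this event together with $\E_x[\sum_{i=1}^j F_{u_{in}}(\tau_{in})]\le\sum_{i=1}^m f_i$ yields $\E_x[\one_{A_{n,j}^\ell(D)}H(X_{n,j})]\le \P_x(A_{n,j}^\ell(D))H(x)+(n+1)\sum_{i=1}^m f_i$. Assembling the three pieces,
\[
\E_x\big[\one_{A_{n,j}^\ell(D)}H(X_k)\big]\le a_\ell\,\P_x(A_{n,j}^\ell(D))\,H(x)+C_{n,k},\qquad C_{n,k}:=\big(a_\ell(n+1)+(k-n)\big)\sum_{i=1}^m f_i+g_\ell.
\]

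Finally, summing this over $j=0,\dots,m-1$ and using disjointness, which gives $\one_{A_n^\ell(D)}=\sum_{j=0}^{m-1}\one_{A_{n,j}^\ell(D)}$ and $\P_x(A_n^\ell(D))=\sum_{j=0}^{m-1}\P_x(A_{n,j}^\ell(D))$, produces exactly \eqref{eqn:PDest1} with $f_{n,k}=m\,C_{n,k}\ge 0$. The only delicate point — and the reason the expected dissipation factor is precisely $a_\ell=\E\alpha_\ell(\tau)\in(0,1)$ rather than something weaker — is the independence bookkeeping in the second paragraph: one must verify that the first-entrance event $A_{n,j}^\ell(D)$, which by construction records the state and the label of the next vector field at the instant the random dynamics lands in $D$, genuinely does not consult the exponential time $\tau_{(j+1)n}$ that then drives the dissipative flow $\varphi^\ell$. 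Everything else is the telescoping already used for Proposition~\ref{prop:subcon} together with elementary estimates on the expectations of the $F_i(\tau)$ and $G_\ell(\tau)$ terms, so I do not anticipate any further obstacle.
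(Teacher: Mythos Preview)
Your argument is correct and follows essentially the same route as the paper's proof: decompose $A_n^\ell(D)$ into the disjoint pieces $A_{n,j}^\ell(D)$, apply partial dissipation at the $(j+1)$st flow, use independence of $\tau_{(j+1)n}$ from $\one_{A_{n,j}^\ell(D)}H(X_{n,j})$, and use subconservation both before $X_{n,j}$ and after $X_{n,j+1}$. The only cosmetic difference is ordering---the paper first telescopes from $X_k$ down to $X_{n,j+1}$ and then splits over $j$, whereas you split first---and your resulting constant $f_{n,k}=m\big((a_\ell(n+1)+(k-n))\sum_i f_i+g_\ell\big)$ differs slightly from the paper's $g_\ell+((m-1)n+m+k)\sum_i f_i$, but this is immaterial since the statement only asserts existence.
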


 \begin{proof}
Let $D\in \mathcal{B}$ be nonempty and suppose that the bound~\eqref{eqn:pd} holds for some index $\ell \in \{1,2,\ldots, m\}$, measurable $G_\ell:[0, \infty)\rightarrow [0, \infty)$ with $g_\ell= \E G_\ell(\tau)<\infty$ and strictly decreasing function $\alpha_\ell: [0, \infty) \rightarrow (0,1]$ with $a_\ell = \E \alpha_\ell (\tau)$.  Since $\mathscr{S}$ is subconservative, suppose the bounds~\eqref{eqn:subcon} hold for some $F_i:[0, \infty)\rightarrow [0, \infty)$ measurable with $f_i = \E F_i(\tau)<\infty$.  For simplicity in the arguments below, set $A=A_{n}^\ell(D)$ and $A_j= A_{n,j}^\ell(D)$. 

Since $k\geq n+1$ and the union~\eqref{eqn:union} is disjoint, relation~\eqref{eqn:sub3} gives
 \begin{align}
\nonumber \E_x \one_{A} H(X_k)& \leq \E_x \one_{A} H(X_{n+1}) + (k-n-1) \textstyle{\sum_{i=1}^m f_i}\\
\nonumber & = \textstyle{\sum_{j=0}^{m-1}} \E_x \one_{A_{j}} H(X_{n+1}) + (k-n-1)  \textstyle{\sum_{i=1}^m f_i} \\
\label{eqn:prop:PDest1}& \leq \textstyle{\sum_{j=0}^{m-1}} \E_x \one_{A_{j}} H(X_{n,j+1}) + (k-n)  \textstyle{\sum_{i=1}^m f_i} . \end{align}   
 Next, observe that 
 \begin{align*}
\one_{A_{j}} H(X_{n,j+1}) \leq \one_{A_{j}} \alpha_\ell(\tau_{(j+1)n}) H(X_{n,j}) + \one_{A_j}G_\ell(\tau_{(j+1)n}).
\end{align*}
Using independence of $\tau_{(j+1)n}$ and $\one_{A_{j}} H(X_{n,j})$, and independence of $\tau_{(j+1)n}$ and $\one_{A_{j}}$, we thus obtain\begin{align*}
\E_x \one_{A_{j}} H(X_{n, j+1})  \leq a_\ell \E_x[ \one_{A_{j}} H(X_{n,j})] +  g_\ell \PP(A_j ).
\end{align*}
Hence, this estimate with subconservation implies
\begin{align}
\nonumber \E_x \one_{A_{j}} H(X_{n,j+1}) & \leq a_\ell \E_x[ \one_{A_j} H(X_{n,j})] + g_\ell  \PP(A_j) \\
\label{eqn:prop:PDest2}& \leq a_\ell \PP_x( A_j) H(x)  + (n+1)\textstyle{\sum_{j=1}^m} f_i + g_\ell  \PP(A_j).
\end{align}
Combining~\eqref{eqn:prop:PDest1} with~\eqref{eqn:prop:PDest2}, we obtain the claimed estimate~\eqref{eqn:PDest1} with $f_{n,k}$ given by 
\begin{align}
\label{eqn:defalphaf_{n,k}}
f_{n,k} = g_\ell+ ((m-1)n + m +k) \textstyle{\sum_{i=1}^m f_i}.  
\end{align}

  \end{proof}

 \subsection{Entrances to a dissipative region}
 \label{sec:entrances}
As we have seen in the case of the random splitting  of the Lorenz '96 equation~\eqref{eqn:Lor96}, checking subconservation and partial dissipation for the random splitting is relatively straightforward.  See Example~\ref{ex:lor1} and Example~\ref{ex:lor2} for further details of this point.  By way of Proposition~\ref{prop:Lyop}, the central difficulty in showing a bound like~\eqref{eqn:contract} is estimating $\PP_x(A_n^\ell(D))$ for a dissipative set $D$ and a dissipative direction $\ell \in \{1,2,\ldots, m \}$.  Such an estimate comes from the specific nature of the dynamics of the splitting, as the system transports an initial condition $x\in \R^d$ to $D$ in a certain number of steps $n$.  The way in which this transport happens inherently depends on $x$.  In this section, we provide the condition (Theorem~\ref{thm:check} below) we will check in subsequent sections to establish the needed lower bound on this probability.

To state the result, define for $R>0$
 \begin{align}
 H_{>R} = \{ x\in \R^d\, : \, H(x) >R \}.   
 \end{align}
 
% \begin{definition}
% Let $D\in \mathcal{B}$ be non-empty, $\ell \in \{ 1,2,\ldots, m\}$, and $p:[1, \infty) \rightarrow (0,1]$ be measurable.  We say that $D$ is $(\ell, p, H)$-\emph{uniformly accessible} if there exists $n\in \Z_{\geq 1}$, $R>0$ such that for every $x\in H_{>R}$   
% \begin{align}
% \label{eqn:lpHunif}
% \PP_x\bigg( \bigcup_{j=0}^{n-1} A_{j}^\ell(D)\bigg) \geq n p(H(x)).
% \end{align}
% \end{definition}
% 
% 

%
%\begin{condition1}
%There exists measurable functions $n:\R^d\rightarrow \Z_{\geq 1}$ and $p:[1, \infty) \rightarrow (0, 1]$ and a constant $R>0$ such that $n_*= \max\{ n(x) \, : \, x\in \R^n \} <\infty$ and such that
% \begin{align*}
% \PP_x(A_{n(x)-1}^\ell(D)) \geq p(H(x)) \,\,\, \text{ for all } \,\, \, x\in H_{> R}.
%  \end{align*}   
%   \end{condition1}  

 \begin{theorem}  

 \label{thm:check}
 Let $D\in \mathcal{B}$ be nonempty and suppose that the splitting $\mathscr{S}$ is subconservative with respect to $H$ and partially dissipative with respect to $H$ on the set $D$ with index $\ell$.  Suppose there exist $n\in \Z_{\geq 1}$, $R>0$ and $p:[1, \infty)\rightarrow (0,1]$ such that 
 \begin{align}
 \label{eqn:lpHunif}
 \PP_x ( A_{j(x)}^\ell(D)) \geq p(H(x)) \,\,\, \text{ for all } \,\,\, x\in H_{>R},
 \end{align}
 for some index $j(x) \leq n-1$.  Then the Markov chain $(\tilde{X}_j):=(X_{j n})$ satisfies the hypotheses of Proposition~\ref{prop:Lyop} with the same choice of $p$.   
  \end{theorem}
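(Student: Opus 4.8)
\emph{Plan.} The plan is to verify the hypotheses of Proposition~\ref{prop:Lyop} directly for the subsampled chain $(\tilde X_j):=(X_{jn})$, which is a time‑homogeneous Markov chain with one‑step transition operator $\mathcal P_n$, so that $\tilde X_1=X_n$. Concretely, I must produce, for each $x\in\R^d$, an event $A(x)\in\mathcal F$ with $\PP_x(A(x))\ge p(H(x))$ together with constants $\alpha\in(0,1)$ and $f_1,f_2\ge0$ independent of $x$ such that
\begin{align*}
\E_x\big[H(X_n)\one_{A(x)}\big]&\le \alpha\,H(x)\,\PP_x(A(x))+f_1,\\
\E_x\big[H(X_n)\one_{A(x)^c}\big]&\le H(x)\,\PP_x(A(x)^c)+f_2 .
\end{align*}
I will take $\alpha:=a_\ell\in(0,1)$ from Definition~\ref{def:pd} and define $A(x)$ by cases according to whether $H(x)>R$.

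For $x\in H_{>R}$ I set $A(x):=A_{j(x)}^\ell(D)$, where $j(x)\le n-1$ is the index furnished by~\eqref{eqn:lpHunif}; then $\PP_x(A(x))\ge p(H(x))$ is exactly~\eqref{eqn:lpHunif}. For the first estimate I apply Proposition~\ref{prop:PDest} with the pair $(n,k)$ in that statement taken to be $(j(x),n)$ — legitimate since $n\ge j(x)+1$ — obtaining
\begin{align*}
\E_x\big[H(X_n)\one_{A_{j(x)}^\ell(D)}\big]\le a_\ell\,\PP_x\big(A_{j(x)}^\ell(D)\big)\,H(x)+f_{j(x),n},
\end{align*}
with $f_{j(x),n}=g_\ell+((m-1)j(x)+m+n)\sum_{i=1}^m f_i$. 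Since this constant is nondecreasing in its first index and $j(x)\le n-1$, it is dominated by the $x$‑independent quantity $\bar f_1:=g_\ell+((m-1)(n-1)+m+n)\sum_{i=1}^m f_i$. For the second estimate I apply Proposition~\ref{prop:subcon} over $n$ steps with the event $A_{j(x)}^\ell(D)^c$ in place of $A$, which gives the required inequality with $f_2:=n\sum_{i=1}^m f_i$.

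For $x\in H_{\le R}$ the hypothesis~\eqref{eqn:lpHunif} carries no information, so I instead take $A(x):=\Omega$. Then $\PP_x(A(x))=1\ge p(H(x))$ because $p$ is $(0,1]$‑valued, the second estimate is automatic since $\one_{A(x)^c}\equiv0$, and for the first one Proposition~\ref{prop:subcon} with $A=\Omega$ gives $\E_x[H(X_n)]=\mathcal P_n H(x)\le H(x)+n\sum_{i=1}^m f_i\le R+n\sum_{i=1}^m f_i$, which is at most $a_\ell H(x)+f_1$ provided $f_1\ge (1-a_\ell)R+n\sum_{i=1}^m f_i$. Taking $f_1:=\max\big\{\bar f_1,\,(1-a_\ell)R+n\sum_{i=1}^m f_i\big\}$ and $f_2:=n\sum_{i=1}^m f_i$, all three requirements then hold for every $x\in\R^d$, and Proposition~\ref{prop:Lyop} applies to $(\tilde X_j)$ with the same $p$.

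\emph{Main obstacle.} There is no hard analytic step: the substance is already packaged in Propositions~\ref{prop:subcon} and~\ref{prop:PDest}. The only points needing care are organizational — confirming that the index identification $(n,k)\leftrightarrow(j(x),n)$ respects the constraint $k\ge n+1$ of Proposition~\ref{prop:PDest} (which is precisely what $j(x)\le n-1$ guarantees), checking that the associated error constant can be made uniform in $x$, and supplying the innocuous choice $A(x)=\Omega$ on $H_{\le R}$ with $f_1$ chosen large enough to absorb the crude subconservation bound there.
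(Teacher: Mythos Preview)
Your proof is correct and follows essentially the same approach as the paper: define $A(x)=A_{j(x)}^\ell(D)$ on $H_{>R}$ and $A(x)=\Omega$ on $H_{\le R}$, then invoke Proposition~\ref{prop:PDest} (with indices $(j(x),n)$) and Proposition~\ref{prop:subcon} respectively. Your write-up is in fact slightly more careful than the paper's, since you make explicit the uniform bound $\bar f_1$ on $f_{j(x),n}$ via monotonicity in the first index, whereas the paper just writes ``for some constant $c_n\ge0$''.
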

  
  \begin{remark}
  We emphasize that the index $j(x)$ in~\eqref{eqn:lpHunif} depends on the initial state $x$ but has the uniform upper bound $j(x) \leq n-1$ over all $x\in H_{>R}$. 
  \end{remark}

 \begin{proof}[Proof of Theorem~\ref{thm:check}]
Since $\mathscr{S}$ is subconservative with respect to $H$, pick $F_i$ and $f_i= \E F_i(\tau)< \infty$ so that the bounds~\eqref{eqn:subcon} hold for $i=1,2,\ldots, m$.  Since $\mathscr{S}$ is partially dissipative with respect to $H$ on $D$ with index $\ell$, suppose that the bound~\eqref{eqn:pd} is satisfied for some measurable $G_\ell:[0, \infty) \rightarrow [0, \infty)$ and $\alpha_\ell:[0, \infty)\rightarrow (0,1]$ with $\alpha_\ell$ strictly decreasing and $g_\ell=\E G_\ell(\tau)< \infty$ and $a_\ell= \E \alpha_\ell (\tau)\in (0,1)$.  By hypothesis, pick $n\in \Z_{\geq 1}$ and $R>0$ such that for every $x\in H_{>R}$ we have~\eqref{eqn:lpHunif}.  We claim that the Markov chain $(\tilde{X}_j):=(X_{jn})$ satisfies the hypothesis of Proposition~\ref{prop:Lyop}.

For $x\in H_{>R}$, define the event $A=A(x)$ in the statement of Proposition~\ref{prop:Lyop} by $A(x)= A_{j(x)}^\ell(D)$.  Then  
\begin{align*}
\PP_x(A)=\PP(A(x))\geq p(H(x)) \,\,\text{ for all } x\in H_{>R}. 
\end{align*}
On the other hand, for $x\in H_{\leq R}$ we define $A=A(x)= \Omega$, so that $$\PP_x(A)= \PP(A(x))=1 \geq p(H(x)) \,\, \text{ for all } \,\, x\in H_{\leq R}.$$  Next, by Proposition~\ref{prop:PDest}, for any $x\in H_{>R}$ we have 
 \begin{align*}
 \E_x\one_A H(\tilde{X}_1)= \E_x \one_A H(X_{n}) &\leq a_\ell \PP_x(A) H(x) + c_n
 \end{align*}
for some constant $c_n\geq 0$.  
 By Proposition~\ref{prop:subcon}, we have for any $B\in \mathcal{F}$ and $x\in \R^d$
 \begin{align*}
\E_x\one_{B} H(\tilde{X}_1)= \E_x \one_{B} H(X_{n})& \leq \PP_x(B) H(x) + n \textstyle{\sum_{i=1}^m f_i}.  
 \end{align*}   
In particular, for any $x\in H_{\leq R}$ we also have the estimate
 \begin{align*}
 \E_x \one_{A} H(\tilde{X}_1) \leq H(x) + n \textstyle{\sum_{i=1}^m f_i} \leq a_\ell \PP_x(A) H(x) + (1-a_\ell) R + n \textstyle{\sum_{i=1}^m f_i}.  \end{align*}  
 \end{proof}

\section{Entrance probabilities to the dissipative region: the Lorenz '96 Splitting}
\label{sec:lor}

Recall the random splitting $(X_n)$ of the Lorenz '96 equation~\eqref{eqn:Lor96} introduced in Section~\ref{sec:lorint} and discussed in Example~\ref{ex:lor1} and Example~\ref{ex:lor2}.   In this section we finish proving Theorem~\ref{thm:main1} by way of Theorem~\ref{thm:check}.  In particular, we have left to show the required lower bound on the entrance probabilities as in~\eqref{eqn:lpHunif}.     

To setup the precise statement of the result to be proven in this section, pick $h_* \in (0,\pi/12)$ small enough so that for $h\leq h_*$ each of the following conditions are met:  
\begin{align}
\label{eqn:h_*choice}
&|\sin(|y|)|\leq h\,\,  \text{ implies  }\,\,| |y| - j \pi | < 3h\,\, \text{ for some }\,\, j \in \Z_{\geq 0},\\
\nonumber & \qquad e^{-6h} \geq \tfrac{3}{4}, \,\,\,  \text{ and } \,\,\, 1-e^{-5h} - 3h e^{-5h} \geq h. 
\end{align}
Throughout this section, we fix $h \in (0, h_*]$ and recall that $h$ corresponds to the exponential clocks $\tau_{ij} \sim \exp(1/h)$.  Also recall the set $D_\eta$ introduced in~\eqref{def:Deta}.  Using Example~\ref{ex:lor1} and Example~\ref{ex:lor2}, to show Theorem~\ref{thm:main1} it suffices to show the following result by Theorem~\ref{thm:check}.
\begin{theorem}
\label{thm:lblor}
There exist $\eta \in (0,1)$, $R>0$ and a constant $p_*>0$ such that for every $x\in H_{>R}$
\begin{align}
\PP_x (A_{j(x)}^\star(D_\eta) ) \geq p_*
\end{align}
for some index $j(x) \leq d$. 
\end{theorem}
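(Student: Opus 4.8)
The plan is to verify the hypotheses of Theorem~\ref{thm:check} by producing, for each $x\in H_{>R}$, an explicit finite word of conditioned flows that drives a fixed fraction of the energy $|x|^2$ into the first mode and is immediately followed by $V_\star$. Two structural facts are used throughout. First, each $V_i$ with $1\le i\le d$ generates a planar rotation of the pair $(x_i,x_{i+1})$ whose angular speed is the \emph{frozen} coordinate $x_{i-1}$ (indeed $\dot x_{i-1}=0$ along the $V_i$-flow), and in particular the $V_i$ conserve $|x|$; second, $V_\star$ damps $x_1$ toward $\beta_1$ and drifts each other coordinate by $\beta_i t$, with every $\beta_i\ne 0$, so flowing $V_\star$ for a positive time makes \emph{all} coordinates nonzero. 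Fix $\delta\in(0,\tfrac12)$ and choose $\eta\in(0,1)$ small depending on $d$; fix a window $[0,T]$ with $T=T(d,h)$ large, to be specified. Given $x$ with $|x|=\rho>R$ we may assume $x\notin D_\eta$ (if $x\in D_\eta$ then $\PP_x(A_{0,0}^\star(D_\eta))=\tfrac1{d+1}$ and we take $j(x)=0$), so $x_1^2<\eta\rho^2$; pick $i_0$ with $x_{i_0}^2\ge\rho^2/d$, whence $i_0\ne 1$.

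\emph{The generic maneuver.} Route the dominant energy ``up'' around the cycle, $i_0\to i_0+1\to\cdots\to d\to1$: the $k$-th hop uses $V_{i_0+k-1}$ to rotate the pair $(x_{i_0+k-1},x_{i_0+k})$ with angular speed $x_{i_0+k-2}$. At each hop the flow time is chosen so that a fraction lying in $[\tfrac12,1-\delta]$ of the pair's energy is passed forward — crucially \emph{not} the entire amount — so that the just-vacated coordinate retains size comparable to $\rho$ (up to a constant depending only on $d,\delta$). Consequently the speed $x_{i_0+k-2}$ of hop $k$, for $k\ge2$, is itself comparable to $\rho$ (it carried the dominant energy at hop $k-1$), while mode $1$, visited only at the last hop, stays $<\sqrt\eta\,\rho$ until then and receives $\ge\eta\rho^2$ after it once $\eta$ is small enough in terms of $d$. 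Because each hop's speed is comparable to $\rho\gg1$ while $\rho>R$ is large, over the \emph{fixed} window $[0,T]$ the rotation angle sweeps many full periods, so the set of flow times producing a $[\tfrac12,1-\delta]$-fraction — equivalently, keeping the angle away from the ``bad'' values, where the elementary $\sin$-clause of~\eqref{eqn:h_*choice} turns ``bad'' into ``within $3h$ of a multiple of $\pi$'' — has Lebesgue measure bounded below by a positive constant inside $[0,T]$, hence $\tau$-probability bounded below \emph{uniformly in $\rho$}. Running these $\le d-1$ hops in one super-step under the permutation $(V_{i_0},\dots,V_d,V_\star,V_1,\dots,V_{i_0-1})$ places the state in $D_\eta$ at sub-step $k=d-i_0+1$ (and in no earlier one, since $|x|$ is conserved by the rotations and $x_1$ is untouched) and then fires $V_\star$; the permutation has probability $1/(d+1)!$ and the $\le d-1$ time conditions each probability bounded below by a constant $c(d,h,\delta)>0$, so $\PP_x(A_{0}^\star(D_\eta))\ge p_*$ with $j(x)=0$, provided $|x_{i_0-1}|$ exceeds a fixed threshold $\theta_0=\theta_0(d,h,\delta)>0$ — which is what makes the first hop's speed usable.

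\emph{The degenerate maneuver.} If instead $|x_{i_0-1}|<\theta_0$ (the extreme case being $x\approx\rho\,e_{i_0}$, where \emph{no} rotation does anything at all), we spend super-step $0$ on $V_\star$ first: conditioning $u_{1,0}=\star$ and $\tau_{1,0}$ to a fixed-measure subset of $[0,\infty)$ keeps $|x_{i_0}|$ comparable to $\rho$ while forcing $x_{i_0-1}$ (and every other coordinate) to become nonzero of size $\Theta(1)$, and conditioning the remaining $d$ flows of super-step $0$ to small times makes their rotations harmless; then super-step $1$ runs the generic maneuver, giving $j(x)=1$. Nothing is lost by reusing $V_\star$, since it appears once in each super-step. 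If one instead executes the hops one per super-step — which avoids ever compounding two rotations, simplifying the bookkeeping — the energy front reaches mode $1$ after at most $d$ super-steps, still within $j(x)\le d$. In every case the number of conditioned permutations and flow times is bounded by a function of $d$, each conditioned event has probability bounded below by a function of $(d,h)$ uniformly over $x\in H_{>R}$, and this yields $p_*=p_*(d,h)>0$.

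\emph{Where the difficulty lies.} The real work is obtaining estimates \emph{uniform in $\rho=|x|$}, i.e.\ over the non-compact family of large spheres. This dictates both features above: a routing that never stalls — secured by pushing ``up'' along the energy trail so that every speed past the first is automatically comparable to $\rho$, by the partial-retention rule at each hop (never fully emptying a coordinate), and by the $V_\star$ pre-stir in the genuinely degenerate configurations — and the refusal to ever aim at a small flow time (whose probability would vanish as $\rho\to\infty$), replaced by letting the rotation run through many periods and conditioning only on hitting a fixed-density ``good'' subset of a fixed window, for which the trigonometric bookkeeping encoded in~\eqref{eqn:h_*choice} is precisely what separates good angles from bad. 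Carrying this out for \emph{every} $x\in H_{>R}$, with the attendant case analysis, is the content of the rest of Section~\ref{sec:lor}.
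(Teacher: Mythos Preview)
Your upward-routing idea is a genuine alternative to the paper's argument and the core mechanism is correct. The paper moves energy \emph{downward}, $j\to j-1$, via $V_{j-1}$; the angular speed of that hop is $x_{j-2}$, an as-yet untouched coordinate which may well vanish, so the paper is forced to prepend $V_\star$ to \emph{every} super-step (Lemma~\ref{lem:speed}) to guarantee $|x_{j-2}|\ge h|\beta_{j-2}|$, then hop once (Lemma~\ref{lem:down}), then maintain the new front through the remaining flows (Lemma~\ref{lem:energysame}); this costs one super-step per mode. Your upward hop $V_{i_0+k-1}$ has speed $x_{i_0+k-2}$, which for $k\ge 2$ is precisely the leftover from the previous hop and is therefore $\Theta(\rho)$ by your partial-retention rule --- a nice structural observation the paper does not exploit --- and this is what lets you compress all hops into a single super-step in the generic case.

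The degenerate maneuver, however, has a real gap. After the initial $V_\star$ in super-step~0 you have $|x_{i_0}|\sim\rho$ and all other coordinates $\Theta(1)$. The flows $V_{i_0}$ and $V_{i_0-1}$ each rotate a pair containing $x_{i_0}$ at angular speed $\Theta(1)$, so for any time $\tau$ bounded away from $0$ they move $\Theta(\rho\sin\tau)=\Theta(\rho)$ mass into the neighbor; keeping that neighbor $O(1)$ would require $\tau=O(1/\rho)$, whose probability is $O(1/\rho)$ and hence not uniform. Thus ``small times make the remaining rotations harmless'' fails exactly for the two flows that touch the large mode, and you have no guarantee that at the start of super-step~1 the new dominant mode's predecessor exceeds $\theta_0$. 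A clean repair in your framework: in the degenerate case place $V_\star$ \emph{first} in super-step~0 to seed $|x_{i_0-1}|\ge\theta_0$, then run the upward hops $V_{i_0},\dots,V_d$ immediately after (so the first hop now has a usable speed), and use an energy-maintenance argument in the style of Lemma~\ref{lem:energysame} on the tail flows $V_1,\dots,V_{i_0-1}$ to keep $|x_1|\ge c\rho$ through to $X_1$; then $X_1\in D_\eta$ and $A_1^\star(D_\eta)$ holds with probability at least $1/(d+1)$. The paper avoids the whole degenerate/generic split by regenerating the speed with $V_\star$ at every step, trading your single super-step for up to $d$ of them.
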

To prove Theorem~\ref{thm:lblor}, we first deduce some key lemmata below in Section~\ref{sec:keylemlor}.  These results explain how ``energy" is transferred and/or maintained along each of the vector fields in the splitting $\mathscr{S}$ under certain assumptions on the initial data.  Afterwards, we will use the lemmata to conclude Theorem~\ref{thm:lblor}.  

\begin{remark}
We expect the conclusion of Theorem~\ref{thm:lblor} to hold for general $h>0$ not necessarily satisfying~\eqref{eqn:h_*choice}.  The choice of $h\in (0, h_*]$ is for convenience in the proofs below and because our primary interest is in the regime where $h>0$ is small, fixed.   
\end{remark} 

\begin{remark}
Although we assume nontrivial forcing on every direction in the random splitting for Lorenz '96 (see the Section~\ref{sec:lorint}), we believe a similar result holds even if less forcing is present.  However, the argument used would likely be more complicated because more nontrivial combinations of the $V_i$'s would have to be used to propagate energy to the damped direction.  Furthermore, with less forcing, the structure of the dissipative bound~\eqref{eqn:bound} may also change to reflect longer return times to the center of space.           
\end{remark}

\subsection{The key lemmata}
\label{sec:keylemlor}

In what follows, we let $\pi_j :\R^d\rightarrow \R$ denote the projection onto the $j$th coordinate.  The first lemma describes how energy is transferred downward from mode $j$ to $j-1$ along $V_{j-1}$ as in~\eqref{eqn:vf1}.  An analogous result holds going upward from $j-1$ to $j$, but we provide only the result below for expeditiousness. 

Because of the cyclical nature of the vector fields $V_1, V_2, \ldots, V_d$, we adopt the convention that their indices are defined modulo $d$, e.g. $V_{d+1}=V_1$, with the appropriate shifts in the definition of the vector field.   Recall also the notation
\begin{align*}
\varphi^{\star}_t(x)=\varphi_t^{V_\star}(x), \,\, t\geq 0, \, x\in \R^d. 
\end{align*}
\begin{lemma}[Downward energy transfer]
\label{lem:down}
Let $x\in \R^d_{\neq 0}$, and suppose that for some $j=1,\ldots, d$ \text{\emph{(mod $d$)}}, there exists constants $c_1>c_2>0$ such that $|x_j|\geq c_1 |x|$ and $|x_{j-1}|\leq  c_2 h |x|$.  If $|x_{j-2}|\geq c_3 >0$ for some constant $c_3>0$, then with probability at least $p_1=p_1(c_3)>0$ depending only on $c_3$ we have 
\begin{align*}
|\pi_{j-1} (\varphi_\tau^{j-1}(x))| \geq (c_1-c_2) h |x|= (c_1-c_2) h | \varphi_\tau^{j-1}(x)|. 
\end{align*}   
\end{lemma}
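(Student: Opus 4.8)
The plan is to use that the flow of $V_{j-1}$ is essentially a planar rotation, so that the stated lower bound on $|\pi_{j-1}(\varphi^{j-1}_\tau(x))|$ amounts to asking that the random rotation angle $x_{j-2}\tau$ avoid a small neighbourhood of the multiples of $\pi$. First I would unwind the dynamics: since $V_{j-1}(x)=x_jx_{j-2}e_{j-1}-x_{j-1}x_{j-2}e_j$ (indices mod $d$) fixes all coordinates except $x_{j-1},x_j$ and in particular freezes $a:=x_{j-2}$, along $\varphi^{j-1}_t$ we have $\dot x_{j-1}=ax_j$ and $\dot x_j=-ax_{j-1}$. Hence $(x_{j-1}(t),x_j(t))$ is obtained from $(x_{j-1},x_j)$ by a rotation through angle $at$; in particular $x_{j-1}(t)^2+x_j(t)^2$ and $|x|$ are conserved (the latter gives the equality asserted in the conclusion), and writing $x_{j-1}=r\cos\phi$, $x_j=r\sin\phi$ with $r=\sqrt{x_{j-1}^2+x_j^2}$ we obtain
\[
\pi_{j-1}\big(\varphi^{j-1}_\tau(x)\big)=r\cos(a\tau-\phi).
\]
Since the hypotheses give $c_1|x|\le|x_j|\le r\le|x|$, it suffices to produce an event of probability at least $p_1(c_3)$ on which $|\cos(a\tau-\phi)|\ge\varepsilon_1:=(c_1-c_2)h/c_1$, because then $|\pi_{j-1}(\varphi^{j-1}_\tau(x))|=r|\cos(a\tau-\phi)|\ge c_1|x|\,\varepsilon_1=(c_1-c_2)h|x|$.

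Next I would do the elementary phase bookkeeping. From $|\cos\phi|=|x_{j-1}|/r\le\varepsilon_0:=(c_2/c_1)h<1$ we may write $\phi=\pm\tfrac{\pi}{2}+\psi$ (modulo $2\pi$) with $|\psi|\le\arcsin\varepsilon_0$, and then $|\cos(a\tau-\phi)|=|\sin(a\tau-\psi)|$. Using the elementary inequality $\arcsin t\le\tfrac{\pi}{2}t$ on $[0,1]$ together with the identity $\varepsilon_0+\varepsilon_1=h$, one gets $\arcsin\varepsilon_0+\arcsin\varepsilon_1\le\tfrac{\pi}{2}h$; consequently, whenever $a\tau\in[k\pi+\tfrac{\pi}{2}h,\,(k+1)\pi-\tfrac{\pi}{2}h]$ for some $k\in\Z_{\ge0}$, the number $a\tau-\psi$ lies in $[k\pi+\arcsin\varepsilon_1,\,(k+1)\pi-\arcsin\varepsilon_1]$, so $|\sin(a\tau-\psi)|\ge\varepsilon_1$, which is exactly what we want. (If $a<0$ replace $a\tau$ by $|a|\tau$; since $\sin$ is odd only $|\psi|$ matters and nothing changes. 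The hypothesis $|x_{j-2}|\ge c_3>0$ enters precisely here, guaranteeing the rotation genuinely turns; without it the flow would be the identity on the relevant plane.)

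It then remains to bound below, uniformly over $s:=|x_{j-2}|\ge c_3$, the probability that $s\tau$ falls in $\bigcup_{k\ge0}[k\pi+\tfrac{\pi}{2}h,\,(k+1)\pi-\tfrac{\pi}{2}h]$, where $s\tau\sim\exp(1/(hs))$. Summing the resulting geometric series yields the closed form
\[
Q(s)=\frac{e^{-\pi/(2s)}-e^{-(2-h)\pi/(2hs)}}{1-e^{-\pi/(hs)}},
\]
whose numerator is positive because $h<1$; moreover $Q$ is continuous on $[c_3,\infty)$ and $Q(s)\to 1-h>0$ as $s\to\infty$, so $p_1(c_3):=\inf_{s\ge c_3}Q(s)>0$ and depends only on $c_3$ (the step size $h$ being fixed). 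Combining this with the two previous paragraphs proves the lemma.

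The step I expect to be the real obstacle is the last one: one cannot get away with using only the single interval $[\tfrac{\pi}{2}h,\,\pi-\tfrac{\pi}{2}h]$ (the $k=0$ term), since when $|x_{j-2}|$ is large the angle $a\tau$ is typically large and that interval carries probability tending to $0$; the geometric sum over \emph{all} admissible intervals is needed, and one must check that the total stays bounded away from $0$ as $|x_{j-2}|\to\infty$ (it converges to $1-h$). By contrast, the rotation computation and the $\arcsin t\le\tfrac{\pi}{2}t$ bookkeeping are routine.
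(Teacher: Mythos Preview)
Your proof is correct and follows essentially the same route as the paper's: both write the flow of $V_{j-1}$ as a planar rotation, reduce to showing that the random angle $|x_{j-2}|\tau$ avoids small neighbourhoods of $\pi\Z$, and bound this probability by summing a geometric series over the admissible intervals. The only differences are cosmetic---you use the amplitude--phase form $r\cos(a\tau-\phi)$ where the paper works directly with $x_{j-1}\cos(|x_{j-2}|\tau)+\operatorname{sgn}(x_{j-2})x_j\sin(|x_{j-2}|\tau)$, and the paper's explicit lower bound $e^{-3/c_3}/2$ happens to be uniform in $h\le h_*$ whereas your $\inf_{s\ge c_3}Q(s)$ carries an immaterial $h$-dependence.
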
  

\begin{proof}
Note that 
\begin{align}
z_1:=\pi_{j-1}(\varphi_\tau^{j-1}(x))= x_{j-1} \cos (| x_{j-2}| \tau)+ \text{sgn}(x_{j-2}) x_{j} \sin(|x_{j-2}| \tau).  
\end{align}
Observe that on the event $B_1=\{ |z_1| \leq (c_1-c_2) h |x|\}$ we have under our assumptions on the initial data that 
\begin{align}
|\sin ( |x_{j-2}| \tau) | \leq h .  
\end{align}
By the choice of $h_*>0$ in~\eqref{eqn:h_*choice}, on the event $B_1=\{ |z_1| \leq  h \}$ for $h\leq h_*$ the event 
\begin{align}
\tilde{B}_1(x_{j-2}):=\bigcup_{k=0}^\infty\big\{| |x_{j-2} | \tau - k\pi| \leq 3 h  \big\}
\end{align}
must occur.  Hence, $B_1\subset \tilde{B}_1(x_{j-2})$. Furthermore, by the choice of $h_*$, the union above is disjoint.  We will estimate $\PP(\tilde{B}_1(x_{j-1})^c)\leq \PP(B_1^c)$.  To this end, setting $a= |x_{j-2}|\geq c_3>0$ for simplicity we obtain\begin{align*}
\PP(\tilde{B}_1(x_{j-2})^c) = \sum_{k=0}^\infty \PP\big(a \tau\in ( 3h+\pi k, -3h+ \pi(k+1) )  \big) &=\sum_{k=0}^\infty   e^{-\frac{\pi k}{h a}}\bigg\{e^{-\frac{3}{a}} - e^{\frac{3}{a}  - \frac{\pi}{ha}} \bigg\}\\&=\frac{e^{-\frac{3}{a}} - e^{\frac{3}{a}  - \frac{\pi}{ha}}}{1-e^{-\frac{\pi}{ha}}}.
\end{align*}  
Since $h_* < \pi/12$ we note that $\pi/h -6 \geq \pi/(2h)$.  Hence 
\begin{align*}
\PP(\tilde{B}_1(x_{j-2})^c) =\frac{e^{-\frac{3}{a}} - e^{\frac{3}{a}  - \frac{\pi}{ha}}}{1-e^{-\frac{\pi}{ha}}} = e^{-\frac{3}{a}}  \frac{1-e^{- ( \frac{\pi}{ha} -\frac{6}{a}) } }{1-e^{-\frac{\pi}{ha}}}&\geq e^{-\frac{3}{a}}  \frac{1-e^{- \frac{\pi}{2 ha} } }{1-e^{-\frac{\pi}{ha}}}\\
&=  \frac{e^{-\frac{3}{a}}}{1+e^{-\frac{\pi}{2ha}}}\geq  \frac{e^{-\frac{3}{c_{3}}}}{2}>0. 
\end{align*}
 This finishes the proof.

%Clearly, for any choice of $|x_{j-2}|\geq c_3>0$ and $h\in (0, h_*]$, $0<\PP (\tilde{B}_1(x_{j-1})^c) < 1$.  However, we note that as $|x_{j-2}|\rightarrow \infty$ for $h$ fixed we have, asymptotically,
%\begin{align}
%\PP(\tilde{B}_1(x_{j-1})^c) \sim 1- \frac{6h}{\pi}>0
%\end{align} 
%by the choice of $h_*$ in~\eqref{eqn:h_*choice}.
%In particular, setting
%\begin{align}
%p_1(h, c_3):=\inf_{|y| \geq c_3} \PP(\tilde{B}_1(x_{j-1})^c)  >0  
%\end{align} 
%finishes the proof.
\end{proof}

The next result is complementary to the previous.  Notice how in Lemma~\ref{lem:down} we use the fact that the speed of rotation along $V_{j-1}$, namely $|x_{j-2}|$, is positive.  The following shows how to make $|x_{j-2}|$ sufficiently large using the forcing terms $\beta_i$ while maintaining the hypotheses of Lemma~\ref{lem:down}.

\begin{lemma}[Speed generation]
\label{lem:speed}
Let $|\beta|:= |(\beta_1, \ldots, \beta_d)|$ and suppose that $2\leq j\leq d$ and that there exists a constant $c\in (0,1/4)$ such that $x\in\R^d$ satisfies
\begin{align}
|x| \geq \frac{36 |\beta| }{c}, \qquad |x_j| \geq 4 c |x|, \qquad \text{ and } \qquad |x_{j-1}| \leq  c h |x|. 
\end{align}
If $y= \varphi_\tau^\star(x)$, then with probability at least $p_2>0$ independent of $c,h$ we have 
\begin{align}
&|y| \geq \frac{|x|}{2}, \quad |y_j| \geq 3 c |y|, \qquad |y_{j-1}| \leq  2c h |y|,\qquad \text{ and } \quad |y_{j-2} | \geq  h|\beta_{j-2}|.
\end{align}  
\end{lemma}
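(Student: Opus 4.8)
The plan is to analyze the explicit solution of the flow along $V_\star$, which is $\varphi_t^\star(x) = (e^{-t}x_1 + (1-e^{-t})\beta_1,\ x_2 + \beta_2 t,\ \ldots,\ x_d + \beta_d t)$, and to show that on a fixed-probability set of values of $\tau$ (say $\tau$ in a bounded interval bounded away from $0$, e.g.\ $\tau \in [1,2]$, which has probability $e^{-1/h} - e^{-2/h} \geq p_2 > 0$ uniformly for $h$ small — actually one must be slightly careful since $h$ is small, so better to take $\tau \in [h, 2h]$ or an interval scaling with $h$; I would pick the event $\{\tau \in [2h, 3h]\}$ which has probability $e^{-2} - e^{-3} =: p_2 > 0$ independent of $c,h$), all four asserted inequalities hold. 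Since $j \geq 2$ we have $j - 1 \geq 1$ and $j-2 \geq 0$, and the coordinate $y_{j-2}$ is either $x_1 e^{-\tau} + (1-e^{-\tau})\beta_1$ (if $j-2 = 1$) or $x_{j-2} + \beta_{j-2}\tau$ (otherwise); the damped first coordinate only helps shrink things, so the estimates are if anything easier there.

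First I would bound $|y|$ from below and above: since $y$ differs from $x$ coordinatewise by at most $|{\beta}|\tau$ in the undamped modes and the first coordinate only contracts toward $\beta_1$, the triangle inequality gives $|y| \geq |x| - |{\beta}|\tau - |x_1| (1 - e^{-\tau}) \cdot \mathbf{1} \geq |x| - 2|{\beta}|\tau - $ (a term controlled because $1-e^{-\tau} \leq \tau$); using $\tau \leq 3h$, $h < \pi/12$, and $|x| \geq 36|{\beta}|/c$ with $c < 1/4$, the subtracted terms are a small fraction of $|x|$, yielding $|y| \geq |x|/2$ (in fact much more). Similarly $|y| \leq |x| + |{\beta}|\tau \leq \tfrac{4}{3}|x|$, say. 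Next, for the $j$-th coordinate: $|y_j| \geq |x_j| - |\beta_j|\tau \geq 4c|x| - |{\beta}|\tau$; since $|{\beta}|\tau \leq 3h|{\beta}| \leq c|x|$ (using $|x| \geq 36|{\beta}|/c \geq 3h|{\beta}|/c$ as $3h < 1 < 36$), we get $|y_j| \geq 3c|x| \geq 3c|y|\cdot\frac{|x|}{|y|}$... here one must be careful about direction: we want $|y_j| \geq 3c|y|$, and since $|y| \leq \tfrac{4}{3}|x|$ this needs $|y_j| \geq 4c|x|$, which is too strong. So instead I would keep the sharper bound $|y_j| \geq |x_j| - |{\beta}|\tau \geq 4c|x| - c|x|\cdot\epsilon$ for a genuinely small $\epsilon$ (shrinking the interval for $\tau$ if needed, or using the full strength $|x|\geq 36|{\beta}|/c$ to make $|{\beta}|\tau \leq \tfrac{|x|}{12}$, hence $|y_j| \geq 4c|x| - \tfrac{c|x|}{12}$... still need to beat $3c \cdot \tfrac43 |x| = 4c|x|$). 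The clean fix is to note $|y| \leq |x| + |{\beta}|\tau$ with $|{\beta}|\tau \leq \tfrac{c|x|}{12}\cdot\tfrac{1}{c}$... I will instead argue $|y| \leq (1 + \tfrac{c}{36}\cdot\tfrac{3h}{1})|x|$, and since I get to choose $h_*$ small and the $\tau$-interval, I can force $|y| \leq \tfrac{13}{12}|x|$ and $|y_j| \geq \tfrac{47}{12}c|x| \geq 3c|y|$. The point is purely that every perturbation has size $\leq |{\beta}|\tau$, which the hypothesis $|x| \geq 36|{\beta}|/c$ makes negligible relative to the $c|x|$-scale quantities.

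Then for $y_{j-1}$: $|y_{j-1}| \leq |x_{j-1}| + |\beta_{j-1}|\tau \leq ch|x| + |{\beta}|\tau$; we need this $\leq 2ch|y| $, and since $|y| \geq |x|/2$ it suffices to have $|{\beta}|\tau \leq ch|y| \geq \tfrac{ch|x|}{2}$, i.e.\ $|{\beta}|\tau \leq \tfrac{ch|x|}{2}$; with $\tau \leq 3h$ this is $3h|{\beta}| \leq \tfrac{ch|x|}{2}$, i.e.\ $|x| \geq 6|{\beta}|/c$, which holds. Finally for $y_{j-2}$: since the relevant coordinate (in the generic case $j - 2 \geq 2$) is $x_{j-2} + \beta_{j-2}\tau$ and $\beta_{j-2} \neq 0$, choosing $\tau$ in an interval on which $|x_{j-2} + \beta_{j-2}\tau|$ is bounded below is the one genuinely $x$-dependent step; I would split on the sign and magnitude of $x_{j-2}$ — if $|x_{j-2}| \geq h|\beta_{j-2}|$ already it is automatic, otherwise $\tau$ ranges over an interval of length $\gtrsim h$ on which $\beta_{j-2}\tau$ sweeps an interval of length $\gtrsim h|\beta_{j-2}|$, so for $\tau$ in a sub-interval of probability $\gtrsim 1$ the sum has modulus $\geq h|\beta_{j-2}|$. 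In the edge case $j - 2 = 1$ the coordinate is $e^{-\tau}x_1 + (1-e^{-\tau})\beta_1$ with $\beta_1 \neq 0$; for $\tau \in [2h,3h]$, $1 - e^{-\tau} \approx \tau$ and a similar sign analysis gives $|y_1| \geq h|\beta_1|$ on a positive-probability set, possibly after intersecting with a further sub-interval. The main obstacle is exactly this last point — getting a lower bound on a coordinate of the form $x_{j-2} + \beta_{j-2}\tau$ that is uniform in the (arbitrary) value of $x_{j-2}$ — but it is handled by the elementary observation that a nonconstant affine function of $\tau$ cannot stay small on an interval whose length exceeds (small number)$/|\text{slope}|$, and all the other inequalities are bookkeeping with the triangle inequality under the smallness budget $|{\beta}|\tau \ll c|x|$ supplied by the hypothesis $|x| \geq 36|{\beta}|/c$.
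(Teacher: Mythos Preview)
Your plan is essentially the same as the paper's: write out the explicit flow of $V_\star$, restrict $\tau$ to an interval of the form $[c_1 h, c_2 h]$ (which has probability $e^{-c_1}-e^{-c_2}$ independent of $h$), and verify the four inequalities by triangle-inequality bookkeeping using $|\beta|\tau \ll c|x|$. The paper organizes the argument into three cases according to whether $j\geq 4$, $j=3$, or $j=2$, since in the latter two cases the coordinate $j-2$ or $j-1$ equals $1$ and the damped formula $e^{-\tau}x_1+(1-e^{-\tau})\beta_1$ must be used; you touch on $j-2=1$ but should also note $j-1=1$ explicitly (though your upper bound $|y_{j-1}|\leq |x_{j-1}|+|\beta_{j-1}|\tau$ remains valid there).

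The one genuine slip is in your treatment of $|y_{j-2}|$. You claim that if $|x_{j-2}|\geq h|\beta_{j-2}|$ ``it is automatic'' on $\tau\in[2h,3h]$, but this is false: e.g.\ $x_{j-2}=-\tfrac{5}{2}h\beta_{j-2}$ gives $|x_{j-2}+\beta_{j-2}\tau|\leq \tfrac12 h|\beta_{j-2}|$ for all such $\tau$. The paper fixes this by choosing the $\tau$-interval \emph{depending on} $|x_{j-2}|$: it uses $\{\tau\leq h\}$ when $|x_{j-2}|\geq 2h|\beta_{j-2}|$ (so the perturbation $\beta_{j-2}\tau$ cannot cancel $x_{j-2}$ below $h|\beta_{j-2}|$), and uses $\{3h\leq\tau\leq 4h\}$ (or $\{5h\leq\tau\leq 6h\}$ in the damped-coordinate case) when $|x_{j-2}|\leq 2h|\beta_{j-2}|$ (so $\beta_{j-2}\tau$ dominates). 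Your final paragraph gestures at the right principle---an affine function of $\tau$ with slope $\beta_{j-2}\neq 0$ can only be small on an interval of length $O(h)$---but to make it work on a \emph{single} fixed interval you would need that interval to have length $>2h$ (say $[0,6h]$) and then argue the bad set has length $\leq 2h$, not use $[2h,3h]$ which has length only $h$. Either fix works; the paper's two-interval split is cleaner.
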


\begin{proof}  
 First note the almost sure bound
\begin{align}
e^{-\tau} |x|- |\beta|  \tau \leq |y|\leq |x| + |\beta| \tau. 
\end{align}  
We will prove the result using cases depending on the value of $j$.

 \emph{Case 1} ($4\leq j \leq d$).  First suppose that $|x_{j-2}| \geq 2  |\beta_{j-2}| h$.  Then using the choice of $h_*$ in~\eqref{eqn:h_*choice} and $|x|$ in the statement, on the event $\{ \tau \leq  h\}$ we have that   
\begin{align}
\label{eqn:lorcor1}|y_j| &= |x_j + \beta_j \tau| \geq |x_j| - |\beta_j| \tau \geq  4 c| x|- |\beta| \tau\geq 3c|y|,\\
\label{eqn:lorcor2}|y_{j-1}|&= |x_{j-1}+\beta_{j-1} \tau| \leq |x_{j-1}|+ |\beta_{j-1}| \tau  \leq ch |x|+ |\beta| \tau \leq 2ch |y|,\\
\label{eqn:lorcor3}|y_{j-2}|&= |x_{j-2}+ \beta_{j-2} \tau| \geq |x_{j-2}|- |\beta_{j-2} |\tau \geq |\beta_{j-2}|  h,\\
\label{eqn:lorcor4} |y|& \geq  e^{-\tau} |x| -|\beta| \tau  \geq \frac{3}{4}|x|  -|\beta| \tau \geq \tfrac{|x|}{2}.
\end{align}
Furthermore, $\PP\{ \tau \leq  h\} =1-e^{-1}$.  On the other hand, if $|x_{j-2}| \leq 2  |\beta_{j-2}|h$, then on the event $\{3 h \leq \tau\leq 4  h\}$, which has probability $e^{-3}-e^{-4}$, we have~\eqref{eqn:lorcor1}, \eqref{eqn:lorcor2}, \eqref{eqn:lorcor4} and 
\begin{align}
\label{eqn:lorcor5}
|y_{j-2}|&= |x_{j-2}+ \beta_{j-2} \tau| \geq |\beta_{j-2} |\tau  -|x_{j-2}|\geq |\beta_{j-2}| h.
\end{align}
This finishes the proof in Case 1. 

 \emph{Case 2} $(j=3)$.  First suppose that $|x_{j-2}| \geq 3|\beta_{j-2}| h$.  Using the choice of $h_*$ as in~\eqref{eqn:h_*choice}, on the event $\{\tau \leq  h \}$ we have~\eqref{eqn:lorcor1}, \eqref{eqn:lorcor2}, \eqref{eqn:lorcor4} and 
 \begin{align*}
|y_{j-2}|&= |e^{-\tau} x_{j-2}+ (1-e^{-\tau}) \beta_{j-2} | \geq  |\beta_{j-2}| h \{e^{-h} 3    -1 \} \geq  |\beta_{j-2}| h
\end{align*}
where we used the bound $1-e^{-h} \leq h$.  
On the other hand, if $|x_{j-2}| \leq 3 |\beta_{j-2}| h$, then on the event $\{5h\leq  \tau \leq 6h \}$, which has probability $e^{-5}-e^{-6}$, we have~\eqref{eqn:lorcor1}, \eqref{eqn:lorcor2}, \eqref{eqn:lorcor4} and 
  \begin{align*}
|y_{j-2}|&= |e^{-\tau} x_{j-2}+ (1-e^{-\tau}) \beta_{j-2} | \geq  |\beta_{j-2}|\{ 1-e^{-5h}-3h e^{-5h} \} \geq |\beta_{j-2}|h  
\end{align*}
where again we used the choice of $h_*$ as in~\eqref{eqn:h_*choice}.

\emph{Case 3} $(j=2)$.  Suppose first that $|x_{j-2}| \geq 2 |\beta_{j-2}|h$.  Then on the event $\{ \tau \leq h\}$ we have~\eqref{eqn:lorcor1}, \eqref{eqn:lorcor3}, \eqref{eqn:lorcor4} and
\begin{align}
\label{eqn:lorcor6}
|y_{j-1}|&= |e^{-\tau} x_{j-1}+(1-e^{-\tau})\beta_{j-1} \tau| \leq |x_{j-1}|+ |\beta_{j-1}| \tau  \leq 2 c h |y|.
\end{align}    
On the other hand, if $|x_{j-2}| \leq 2 |\beta_{j-2}| h$, then on the event $\{ 3h \leq \tau \leq 4 h \}$ we have~\eqref{eqn:lorcor1}, \eqref{eqn:lorcor4}, \eqref{eqn:lorcor5}, and \eqref{eqn:lorcor6}.  This finishes the proof of the result. 

\end{proof}

Finally, the last lemma below shows how we can keep energy previously stored while going through the entire splitting.

\begin{lemma}[Energy maintenance]
\label{lem:energysame}
Let $r>0$, $|\beta| = |(\beta_1, \ldots, \beta_d)|$ and suppose $x\in \R^d$ has $|x| \geq r$.  Suppose that for some $j=2,3,\ldots, d$ and constant $c>0$ we have $|x_j| \geq c |x| $.  Then with probability at least $p_3=p_3(r,c,|\beta|)>0$ depending only on $r,c,|\beta|$ we have that 
\begin{align}
|\pi_{j} \varphi_\tau^{k}(x)| \geq \frac{c}{2}| \varphi^k_\tau(x)|, \,\, k=j-1, j,\qquad \text{ and } \qquad| \pi_j \varphi^{\star}_\tau(x)| \geq \frac{c}{2}| \varphi^{\star}_\tau(x)|.  
\end{align}
\end{lemma}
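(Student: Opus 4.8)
The plan is to argue flow by flow, exploiting that for $j \ge 2$ only the three vector fields $V_{j-1}$, $V_j$ and $V_\star$ alter the $j$th coordinate, that $V_{j-1}$ and $V_j$ are planar rotations preserving $|x|$, and that $V_\star$ has the explicit affine form already used in Example~\ref{ex:lor1} and Example~\ref{ex:lor2}. I will establish a strictly positive lower bound on the probability of the good event for each of these three flows (each driven by its own $\exp(1/h)$ time), and then take $p_3$ to be the minimum of the three bounds.

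For $V_\star$: since $j \ge 2$ one has $\pi_j \varphi_\tau^{\star}(x) = x_j + \beta_j \tau$, so $|\pi_j \varphi_\tau^{\star}(x)| \ge |x_j| - |\beta|\tau \ge c|x| - |\beta|\tau$, while the componentwise bounds of Example~\ref{ex:lor1} give $|\varphi_\tau^{\star}(x)| \le |x| + |\beta|\tau$. Using $|x| \ge r$, one checks directly that on $\{\tau \le cr/((2+c)|\beta|)\}$ the inequality $c|x| - |\beta|\tau \ge \tfrac{c}{2}(|x| + |\beta|\tau)$ holds, which gives the claim with probability at least $1 - \exp\!\big(-cr/((2+c)|\beta| h)\big) > 0$. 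This contribution is where the dependence of $p_3$ on $r$ and $|\beta|$ comes from.

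For $V_{j-1}$ and $V_j$ the argument is identical in structure: each rotates a coordinate pair containing $x_j$ at a constant angular speed $\omega$ ($\omega = |x_{j-2}|$ for $V_{j-1}$, $\omega = |x_{j-1}|$ for $V_j$) and preserves $|x|$, so $\pi_j \varphi_\tau^{k}(x) = \rho\cos(\omega\tau + \psi)$ for some phase $\psi$ and amplitude $\rho$ with $c|x| \le |x_j| = \rho|\cos\psi|$ and $\rho \le |x|$, while $|\varphi_\tau^{k}(x)| = |x|$. If $\omega = 0$ the coordinate is frozen and the bound holds surely. If $0 < \omega \le 1$, I would use that $\cos$ is $1$-Lipschitz to get $\rho|\cos(\omega\tau+\psi)| \ge |x_j| - \rho\omega\tau \ge |x|(c - \tau)$, which is $\ge \tfrac c2 |x| = \tfrac c2 |\varphi_\tau^{k}(x)|$ on the event $\{\tau \le c/2\}$ of probability $1 - e^{-c/(2h)} > 0$. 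If $\omega > 1$, I would instead note that $\omega\tau \sim \exp(\nu)$ with $\nu := 1/(h\omega) < 1/h$ and that, since $\rho \ge c|x|$, it suffices to bound $\PP(|\cos(\omega\tau + \psi)| \ge \tfrac12) = \PP(\omega\tau \in S)$ where $S$ is a $\pi$-periodic set with $|S \cap [0,\pi)| = 2\pi/3$; bounding the exponential density below on each period $[k\pi,(k+1)\pi)$ and summing the geometric series yields $\PP(\omega\tau \in S) \ge \tfrac{2\pi}{3}\cdot\tfrac{\nu e^{-\nu\pi}}{1 - e^{-\nu\pi}}$, which by monotonicity of $u \mapsto u/(e^u - 1)$ is at least a positive constant depending only on $h$.

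The one subtle point is this last case: when the rotation speed $\omega$ is large a naive ``keep $\tau$ small'' event has vanishing probability, so instead one extracts a quantitative near-equidistribution of $\omega\tau$ modulo $\pi$ — which is exactly what the geometric-series estimate provides, and it works because the memoryless exponential clock spreads $\omega\tau$ across many periods once $\omega$ is large. Combining the three lower bounds and setting $p_3(r,c,|\beta|)$ equal to their minimum finishes the proof.
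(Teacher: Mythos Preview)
Your proof is correct and follows essentially the same strategy as the paper: handle $V_\star$ by restricting $\tau$ to a short interval (your threshold $cr/((2+c)|\beta|)$ coincides with the paper's $r/((1+2/c)|\beta|)$), and for the rotation fields split into a slow-speed regime (short time) and a fast-speed regime (geometric-series equidistribution estimate on the exponential clock). The only cosmetic difference is that the paper works directly with the expression $x_j\cos(x_{j-2}\tau)-x_{j-1}\sin(x_{j-2}\tau)$ and uses the event $\{||x_{j-2}|\tau-\pi m|\le\epsilon\}$ to force the sine term small, whereas you rewrite as $\rho\cos(\omega\tau+\psi)$, use $\rho\ge c|x|$, and target the set $\{|\cos(\omega\tau+\psi)|\ge\tfrac12\}$; both lead to the same periodic-set computation.
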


\begin{remark}
Observe that if $k \neq j-1, j, \star$, then by conservation we have
\begin{align}
|\pi_j\varphi_\tau^k(x)|=|x_j|\geq c |x|=c |\varphi^k_\tau(x)| 
\end{align}
almost surely.   
\end{remark}

\begin{proof}[Proof of Lemma~\ref{lem:energysame}]
We omit the argument for $k=j$ since it is similar to the case when $k=j-1$. If $k=j-1$, observe that  
\begin{align*}
\pi_j \varphi^k_\tau(x)= x_j \cos( x_{j-2} \tau) - x_{j-1} \sin( x_{j-2} \tau).
\end{align*}
Thus we consider the event $$B_2(j):=\big\{ | \pi_j \varphi^k_\tau(x)| \geq \tfrac{c}{2}|\varphi_\tau^k(x)|\big\}=\big\{| x_j \cos( x_{j-2} \tau) - x_{j-1} \sin( x_{j-2} \tau)|\geq \tfrac{c}{2}|x|\big\}.$$ Now for $\delta>0$ to be determined, if $|x_{j-2}|\leq \delta$, then on the event $\{ \tau \leq h \}$ we can pick $\delta >0$ small enough depending only on $c$ so that $B_2(j)$ occurs.  On the other hand, if $|x_{j-2}| \geq \delta >0$, then there exists $\epsilon\in (0,\pi/2)$ small enough depending only on $c$ so that  
\begin{align}
\label{eqn:unionb}
B_3(x_{j-2}):=\bigcup_{m=0}^\infty \big\{| |x_{j-2}|\tau- \pi m| \leq \epsilon \big\} \subset B_2(j)
\end{align}  
and the union in~\eqref{eqn:unionb} above is disjoint.  Now, for any $a=|x_{j-2}| \geq \delta$ we have 
\begin{align*}
\PP (B_3(x_{j-2})) &= 1-e^{-\frac{\epsilon}{h a}}+\sum_{m=1}^\infty e^{-  \frac{\pi m- \epsilon}{h a}} - e^{-\frac{\pi m+ \epsilon}{h a}}\\
&= 1-e^{-\frac{\epsilon}{h a}} + \frac{e^{-\frac{\pi}{ha}}}{1- e^{-\frac{\pi}{ha}}} \bigg[ e^{\frac{\epsilon}{ha}} - e^{\frac{-\epsilon}{ha}}\bigg]:=g(ha) \in (0, 1]. \end{align*}  
Note that $g(x)\rightarrow 2\epsilon/\pi$ as $x\rightarrow \infty$ and $g(x)\rightarrow 1$ as $x\rightarrow 0^+$.  Thus there exists a constant $p_3>0$ depending only on $\epsilon, \delta$ so that   
\begin{align*}
\inf_{|x_{j-2}| \geq \delta } \PP(B_3(x_{j-2})) \geq p_3. 
\end{align*}
This finishes the proof in the case when $k=j-1$.

Lastly, recall that $j\neq 1$.  Thus observe that on the event $\{ \tau \leq \tfrac{ r}{(1+2/c)|\beta|}\}$
\begin{align*}
|\pi_j \varphi^{\star}_\tau(x)| = | x_j + \beta_j \tau| \geq |x_j|- |\beta_j| \tau \geq c|x| - |\beta_j| \tau\geq \tfrac{c}{2}| \varphi_\tau^\star(x)|,  
\end{align*}
finishing the proof.

\end{proof}

\subsection{Proof of Theorem~\ref{thm:lblor}}

Given the previous lemmata, we can now conclude Theorem~\ref{thm:lblor}.  The needed bound will be verified region-by-region in space, essentially depending on how ``far" away the process is from the dissipative region.

To define the relevant regions, let $R_0= 1/\sqrt{d}$, define constants $R_{j}$, $j=1,2,\ldots, d-1$, inductively by
\begin{align}
R_j= \frac{R_{j-1} h}{16}
\end{align} 
and let 
\begin{align}
\label{def:r_*}
R= \frac{2^{d+5} (|\beta|\vee 1)}{R_{d-1}}
\end{align}
where we recall that $|\beta| = |(\beta_1, \beta_2, \ldots, \beta_d)|$.  
Let $U_j\subset \R^d$, $j=1,2, \ldots, d$, be defined by 
\begin{align}
U_1&= \bigg\{|x| \geq R\, : \, |x_1 | \geq R_{d-1}|x|\bigg\},\\
 U_j &= \bigg\{ |x|\geq 2^{j-1}R\, : \,  |x_j| \geq R_{d-j} |x|, \, \,\,|x_{j-1}| < R_{d-j+1} |x| \bigg\}, \qquad 2\leq j\leq d. 
\end{align}

We first note the following. 
\begin{lemma}
Let $R_*=2^{d-1} R+1$.  Then we have 
\begin{align*}
H_{> R_*} \subset \bigcup_{j=1}^d U_j. 
\end{align*}
\end{lemma}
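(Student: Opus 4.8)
The plan is to show that if $H(x) > R_* = 2^{d-1}R + 1$, equivalently $|x| > 2^{d-1}R$, then $x$ lies in at least one of the $U_j$. I would argue by contradiction: suppose $|x| > 2^{d-1}R$ but $x \notin U_j$ for every $j = 1, \dots, d$. The sets $U_j$ are defined by a ``first large coordinate'' mechanism with geometrically shrinking thresholds $R_{d-1} > R_{d-2} > \cdots > R_0$, so the natural approach is to walk through the coordinates $x_1, x_2, \dots, x_d$ in order and track which threshold each one fails to cross.

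First I would handle $U_1$: since $|x| \geq 2^{d-1}R \geq R$, the norm condition in $U_1$ holds, so $x \notin U_1$ forces $|x_1| < R_{d-1}|x|$. Next, for $j = 2$: the norm bound $|x| \geq 2^{d-1}R \geq 2R = 2^{j-1}R$ holds, and we have just shown $|x_1| < R_{d-1}|x| = R_{d-j+1}|x|$, which is exactly the second constraint defining $U_2$; hence $x \notin U_2$ forces the remaining constraint to fail, i.e. $|x_2| < R_{d-2}|x|$. Iterating this, at stage $j$ the norm condition $|x| \geq 2^{d-1}R \geq 2^{j-1}R$ is automatic, the condition $|x_{j-1}| < R_{d-j+1}|x|$ was established at the previous stage, so $x \notin U_j$ forces $|x_j| < R_{d-j}|x|$. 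Carrying this all the way to $j = d$ yields $|x_i| < R_{d-i}|x|$ for every $i = 1, \dots, d$, and in particular the weakest of these bounds gives $|x_i| < R_{d-1}|x|$ for all $i$ (since $R_{d-1}$ is the largest of the $R_k$).

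Then I would derive the contradiction from $\sum_{i=1}^d |x_i|^2 = |x|^2$: the bounds give $|x|^2 = \sum_{i=1}^d |x_i|^2 < d R_{d-1}^2 |x|^2$, so $d R_{d-1}^2 > 1$. But $R_0 = 1/\sqrt{d}$ gives $d R_0^2 = 1$, and $R_{d-1} = R_0 (h/16)^{d-1} < R_0$ since $h < h_* < \pi/12 < 16$, so $d R_{d-1}^2 < d R_0^2 = 1$, a contradiction. (Using the sharper coordinate-wise bounds $|x_i| < R_{d-i}|x|$ one gets an even more comfortable inequality $\sum_i R_{d-i}^2 = \sum_{k=0}^{d-1} R_k^2 \leq R_0^2 \sum_{k\geq 0}(h/16)^{2k} < 2 R_0^2$ for $h$ small, but the crude bound already suffices.) I do not expect any serious obstacle here; the only point requiring a little care is bookkeeping the index shift between ``$x \notin U_j$ fails its $j$-th coordinate condition'' and ``the previously established bound on $x_{j-1}$ matches the secondary condition in $U_j$,'' which the inductive walk makes transparent.
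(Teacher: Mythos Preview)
Your contrapositive walk is essentially the paper's own argument run in the other direction: the paper first observes that some index $j$ must satisfy $|x_j|\ge R_{d-j}|x|$ and then descends from $j$ (either $|x_{j-1}|<R_{d-j+1}|x|$ and $x\in U_j$, or repeat with $j-1$), whereas you ascend from $j=1$ accumulating the failures $|x_j|<R_{d-j}|x|$. The two inductions are the same.

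There is one slip in your contradiction step. The sequence $(R_k)$ is \emph{decreasing}: $R_j=R_{j-1}h/16$ with $h<h_*<\pi/12<16$, so $R_0>R_1>\cdots>R_{d-1}$ and $R_{d-1}$ is the \emph{smallest} of the $R_k$, not the largest. Thus from $|x_i|<R_{d-i}|x|$ you cannot conclude $|x_i|<R_{d-1}|x|$; you get instead $|x_i|<R_0|x|$ for all $i$, and then
\[
|x|^2=\sum_{i=1}^d|x_i|^2<dR_0^2|x|^2=|x|^2,
\]
which is the desired contradiction directly. Your parenthetical alternative via $\sum_k R_k^2$ is also correct and does not rely on this slip, so the error is cosmetic once noted.
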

\begin{proof} 
Suppose $x\in H_{> R_*}$ so that $|x| \geq 2^{d-1} R $.  Hence there must exist $j=1,2,\ldots,d$ such that $|x_j| \geq R_{d-j} |x|$.  We will show inductively that if $|x_j| \geq R_{d-j}|x|$ with $|x|\geq R_*$, then 
\begin{align}
\label{eqn:containment}
x\in \bigcup_{\ell=1}^{j} U_\ell. 
\end{align}
 If $j=1$, then $|x_1| \geq   R_{d-1}|x|$ and $|x|\geq R_*\geq R$ so that $x\in U_1$.  Inductively if $|x_j| \geq R_{d-j}|x|$ with $|x| \geq R_* \geq  2^{j-1} R$, then either $|x_{j-1}|< R_{d-j+1}|x|$, in which case $x\in U_j$, or $|x_{j-1}|\geq R_{d-j+1}|x|$, in which case the inductive hypothesis implies~\eqref{eqn:containment}.         
\end{proof}

\begin{proof}[Proof of Theorem~\ref{thm:lblor}]  
Note that if $x\in U_1$, then $x\in D_{R_{d-1}}$.  Thus we have 
\begin{align*}
\inf_{x\in U_1} \PP_x( A_0^\star (D_{R_{d-1}})) \geq \PP( u_{10}=\star)= \frac{1}{d+1}.  
\end{align*}   
Using the Markov property, it therefore suffices to show that for all $j \in \{2,\ldots, d\}$
\begin{align}
\inf_{x\in U_j}\PP_x(X_1 \in U_{j-1}) >0.  
\end{align}
To this end, let $j\in \{2, \ldots, d\}$ and assume that $x\in U_j$ so that 
\begin{align}
|x|\geq 2^{j-1}R, \,\,\, |x_j| \geq R_{d-j}|x|\,\,\,\text{ and } \,\,\, |x_{j-1}|< R_{d-j+1}|x|.  
\end{align}
Let $\sigma$ be any permutation of $\{ 1,\ldots, d, \star\}$ with $\sigma_1=\star$ and $\sigma_2=j-1$.   First applying Lemma~\ref{lem:speed} and then applying Lemma~\ref{lem:down}, we see that with probability $q_0(j, d, h,\beta_{j-2})>0$ the point $y= \varphi_{\tau_{20}}^{\sigma_2} \circ \varphi_{\tau_{10}}^{\sigma_1}(x)$ satisfies \begin{align}
\label{eqn:lorlb1}
|\pi_{j-1} y| \geq \frac{R_{d-j}}{4}h |y| \qquad \text{ and } \qquad |y| \geq \frac{|x|}{2} \geq 2^{j-2} R. 
\end{align}  
Applying Lemma~\ref{lem:energysame} and setting $z= \varphi_{\tau_{(d+1)0}}^{\sigma_{d+1}} \circ \cdots \circ \varphi_{\tau_{30}}^{\sigma_3}\circ y$, we find that 
\begin{align}
| \pi_{j-1} z| \geq \frac{R_{d-j}}{8} h| z| \geq R_{d-j +1} |z| \qquad \text{ and } \qquad |z| \geq \frac{|x|}{2} \geq 2^{j-2} R.
\end{align}
with probability $q(j, d, h, \beta_{j-1})>0$.  That is, with probability $q(j, d, h, \beta_{j-2})$, $\Phi_{\tau_0}^{\sigma} (x) \in U_{j-1}$ so that, by independence,
\begin{align*}
\inf_{x\in U_j} \PP_x(X_1 \in U_{j-1} ) &\geq \inf_{x\in U_j} \PP_x( \Phi_{\tau_0}^{u_0}(x) \in U_{j-1}, u_{10}=\star, u_{20}=1 ) \\
& \geq \frac{q(j, d,h,\beta_{j-2})}{d(d+1)}>0.  
\end{align*} 
This finishes the proof. 
  
\end{proof}

\section{Random Splittings of Galerkin Truncations of 2D Euler}
\label{sec:NSE}

In this section, we study random splittings of finite-dimensional
projections of the two-dimensional partially forced and damped Euler equation on the periodic box $\T^2=[0,2\pi]^2$.  A full derivation of the splitting below, starting from the infinite-dimensional partial differential equation, can be found in the Appendix.

Let $N\geq 4$, $d=2N(N+2)$ and define 
\begin{align}
\label{eqn:Z2def}
\ZZ^2:= \big\{ \bj=(j_1, j_2)\in \Z^2_{\neq 0} \,: \, \,  0\leq j_1\leq N, \, 0\leq j_2 \leq N\big\}.
\end{align} 
Our phase space for the splitting below is $\R^{d}$ where points $q\in \R^{d}$ are represented by $q=(a,b)$ where $a,b\in \R^{d/2}$ are each indexed by $\ZZ^2$; that is,
\begin{align}
a=(a_\bj)_{\bj\in \ZZ^2} \qquad \text{ and } \qquad b = (b_\bj)_{\bj\in \ZZ^2}.
\end{align}
Using this convention, for $\bj\in \ZZ^2$ we let $e_{a_\bj}$, respectively $e_{b_\bj}$, denote the vector on $\R^{d}$ which is $1$ in the $a_\bj$th entry, respectively $1$ in the $b_\bj$th entry, and $0$ elsewhere.  For any $\bk, \bl \in \ZZ^2$, define real-valued constants $\theta_{\bk\bl}$ by   
\begin{align}\label{eqn:coef}
\theta_{\bk\bl} := \frac{ \bk\cdot \bl^\perp}{4\pi}\bigg(\frac{1}{|\bk|^2}- \frac{1}{|\bl|^2} \bigg),
\end{align}
where $\cdot$ denotes the dot product on $\R^2$ while $\bl^\perp= (l_1, l_2)^\perp:=(l_2, -l_1)$.  For any $\bj,\bk, \bl \in \ZZ^2$ with $\bj+\bk=\bl$ and $q=(a,b)\in \R^{d}$, we define smooth vector fields $V_{a_\bj a_\bk a_\bl}$, $V_{a_\bj b_\bk b_\bl}$, $V_{b_\bj a_\bk b_\bl}$, $V_{b_\bj b_\bk a_\bl}$ on $\R^d$ by  
\begin{align}
\label{def:trips}
V_{a_\bj a_\bk a_\bl}(a,b) &= \theta_{\bk\bl} a_\bk a_\bl e_{a_\bj} + \theta_{\bj\bl} a_\bj a_\bl e_{a_\bk}  - \theta_{\bj\bk} a_\bj a_\bk e_{a_\bl},\\
\nonumber V_{a_\bj  b_\bk b_\bl}(a,b)  &= \theta_{\bk\bl} b_\bk b_\bl e_{a_\bj} +\theta_{\bj\bl} a_\bj b_\bl e_{b_\bk} -\theta_{\bj \bk} a_\bj b_\bk e_{b_\bl},\\
\nonumber V_{b_\bj a_\bk b_\bl}(a,b) &= \theta_{\bk\bl} a_\bk b_\bl e_{b_\bj}  + \theta_{\bj\bl} b_\bj b_\bl e_{a_\bk} -\theta_{\bj\bk} b_\bj a_\bk e_{b_\bl},\\
\nonumber V_{b_\bj  b_\bk a_\bl}(a,b)&=-\theta_{\bk\bl} b_\bk a_\bl e_{b_\bj} - \theta_{\bj\bl} b_\bj a_\bl e_{b_\bk} + \theta_{\bj\bk} b_\bj b_\bk e_{a_\bl}.  
 \end{align} 
As a simple consequence of conservation laws (see Section~\ref{sec:energyt} below), the vector fields above are moreover complete, i.e. belong to $\mathscr{V}_{d}$.

 Let $\mathfrak{m} \in\N$. For $\bj \in \ZZ^2$ and $\ell=1,2,\ldots, \mathfrak{m}$, fix constants
 \begin{align*}
 \lambda_\bj\geq 0  \quad \text{ and } \quad \beta_{a_\bj}^{\ell}, \beta_{b_\bj}^\ell\in  \R. 
 \end{align*}
  We also define vector fields $V_\text{damp}, V_{\ell} \in \mathscr{V}_{d}$, $\ell=1,2,\ldots, \mathfrak{m}$, for $q=(a,b)\in \R^{d}$ by 
 \begin{align}
 V_\text{damp}(a,b)&:= -\sum_{\bj\in \ZZ^2}\lambda_\bj (a_\bj e_{a_\bj}+ b_\bj e_{b_\bj}) \\
\label{def:force} V_{\ell}(a,b)&:= \sum_{\bj \in\ZZ^2} \beta_{a_\bj}^\ell e_{a_\bj}+ \beta_{b_\bj}^\ell e_{b_\bj}.   \end{align}
Letting 
\begin{align}
\mathscr{I}= \{ (\bj, \bk, \bl) \in  (\ZZ^2)^3\,: \, \bl=\bj+\bk, \, \bj \cdot \bk^\perp \neq 0 \},
\end{align}
our sought after splitting in this section is given by 
\begin{align}
\label{eqn:NSEsplit}
\mathscr{S}:= \big\{ V_\text{damp}, V_{1}, \ldots, V_{\mathfrak{m}}\big\}  \cup \bigcup_{(\bj, \bk, \bl) \in \mathscr{I}} \big\{V_{a_\bj a_\bk a_\bl}, V_{a_\bj b_\bk b_\bl}, V_{b_\bj a_\bk b_\bl}, V_{b_\bj b_\bk a_\bl}  \big\}.
\end{align} 
Clearly, $\mathscr{S}$ is a splitting of the sum of its vector fields.  This sum corresponds to Galerkin projections of the two-dimensional, damped and forced, Euler equation on $\T^2$, as derived in the Appendix. 

Throughout, we let 
\begin{align}
\label{eqn:dmodes}
\mathscr{D}= \{ \bj \in \ZZ^2 \, : \, \lambda_{\bj} >0 \}\end{align} 
denote the collection of \emph{damped} modes.

\subsubsection*{Notation}  Below, for $q\in \R^{d}$, $t\geq 0$, and $(\bj,\bk, \bl) \in \mathscr{I}$, we will use the notation 
\begin{align*}
&\varphi_t^{a_\bj a_\bk a_\bl}(q):= \varphi_t^{V_{a_\bj a_\bk a_\bl}}(q), \quad \varphi_t^{a_\bj b_\bk b_\bl}(q):=\varphi_t^{V_{a_\bj b_\bk b_\bl}}(q), \quad \varphi_t^{b_\bj a_\bk b_\bl}(q):= \varphi_t^{V_{b_\bj a_\bk b_\bl}}(q), \\
&\varphi_t^{b_\bj b_\bk a_\bl}(q) := \varphi_t^{V_{b_\bj b_\bk a_\bl}}(q), \quad \varphi_t^{\text{damp}}(q):= \varphi_t^{V_\text{damp}}(q), \quad \varphi_t^{\ell}(q)= \varphi_t^{V_{ \ell }}(q), 
\end{align*}
where $\ell=1,2,\ldots, \mathfrak{m}. $
We offer the convenient abuse of notation by letting $S_{|\mathscr{S}|}$ in this section denote the set of permutations of the set of symbols
\begin{align}
 \big\{\text{damp}, 1, \ldots,  \mathfrak{m}\big\}\cup\bigcup_{(\bj, \bk, \bl) \in \mathscr{I}}\big\{ a_\bj a_\bk a_\bl, \, a_\bj b_\bk b_\bl, \, b_\bj a_\bk b_\bl, \, b_\bj b_\bk a_\bl \big\}. \end{align}
With this abuse of notation, $\sigma_1, \sigma_2, \ldots, \sigma_j, \ldots$ will denote elements of $S_{|\mathscr{S}|}$ with $u_1, u_2, \ldots, \\u_j, \ldots$ being independent, uniformly distributed elements on $S_{|\mathscr{S}|}$. We also offer slight abuses of notation (see the $\beta^\ell$ and $\Lambda$ in~\eqref{eqn:vort} in the appendix) by letting $\beta^\ell\in \R^{d}$ be the vector
\begin{align}
\label{eqn:betaL}
\beta^\ell= \sum_{j\in \ZZ^2}\beta_{a_\bj}^\ell e_{a_\bj} + \beta_{b_\bj}^\ell e_{b_\bj}
\end{align}
and letting $\Lambda$ be the $d\times d$ diagonal matrix defined by
\begin{align}
\Lambda e_{a_\bj} \cdot e_{a_\bj}= \Lambda e_{b_\bj} \cdot e_{b_\bj} = \lambda_\bj, \,\,\, \bj\in \ZZ^2.  
\end{align}

\subsection{Statement of the main result}

In order to connect with the setup in Section~\ref{sec:notation} and state the main result to be proven in this section, let $H:\R^{d}\rightarrow [1, \infty)$ be given by
\begin{align}
\label{def:NSH}
 H(q)= \sqrt{\sum_{\bj\in \ZZ^2} a_\bj^2+ b_\bj^2} +1, \,\,\,\, q=(a,b) \in \R^d. 
\end{align}
We first establish subconservation and partial dissipation of the splitting. 
\begin{lemma}
\label{lem:scpdNSE}
Suppose that the set of damped modes $ \mathscr{D}$ as in~\eqref{eqn:dmodes} is nonempty, and let $\eta \in (0,1)$.  Then the splitting $\mathscr{S}$ as in~\eqref{eqn:NSEsplit} is:
\begin{itemize}
\item[(i)] subconservative with respect to $H$ with corresponding measurable functions as in~\eqref{eqn:subcon} given by 
\begin{align*}
  &F_\ell(t)= |\beta^\ell|t,\,\,  \ell=1,\ldots, \mathfrak{m}, \quad \text{ and }\\
  &F_{\text{\emph{damp}}}(t) =F_{a_\bj a_\bk a_\bl}(t)=F_{a_\bj b_\bk b_\bl}(t)= F_{b_\bj a_\bk b_\bl}(t)=F_{b_\bj b_\bk a_\bl}(t)\equiv 0,
\end{align*}
for all $(\bj,\bk,\bl) \in \mathscr{I}$.  
\item[(ii)] partially dissipative with respect to $H$ with index $\text{\emph{damp}}$ on the set 
\begin{align}
D_\eta  = \Big\{ q\in \R^{d}\, : \,  \sum_{\bj \in \mathscr{D}} |q_\bj|^2 \geq \eta |q|^2 \Big\}.
\end{align}
\end{itemize}
\end{lemma}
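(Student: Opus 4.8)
The plan is to verify the two properties by the same kind of elementary computation used for the Lorenz~'96 splitting in Example~\ref{ex:lor1} and Example~\ref{ex:lor2}: since $H(q)=|q|+1$ is a shifted Euclidean norm, it suffices to check that each vector field in $\mathscr{S}$ is either norm-preserving, norm-decreasing, or a constant drift. For part~(i) I would split $\mathscr{S}$ into the triad fields, the damping field, and the forcing fields, and compute $\langle V(q),q\rangle$ (equivalently $\tfrac{d}{dt}|\varphi_t(q)|^2=2\langle V(\varphi_t(q)),\varphi_t(q)\rangle$) in each case. For part~(ii) I would integrate the diagonal linear ODE generated by $V_{\text{damp}}$ explicitly and use the defining inequality of $D_\eta$.

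For the triad fields, the key point is the algebraic identity
\begin{align*}
\theta_{\bk\bl}+\theta_{\bj\bl}-\theta_{\bj\bk}=0 \qquad \text{whenever } \bl=\bj+\bk ,
\end{align*}
which follows from $\bk\cdot\bl^\perp=-\bj\cdot\bk^\perp$, $\bj\cdot\bl^\perp=\bj\cdot\bk^\perp$ (using $\bj^\perp\cdot\bj=\bk^\perp\cdot\bk=0$) and cancellation of the $1/|\cdot|^2$ terms in~\eqref{eqn:coef}. Substituting each of the four vector fields in~\eqref{def:trips} into $\langle V(q),q\rangle$ collapses it to a scalar multiple of this combination, so $\langle V_{a_\bj a_\bk a_\bl}(q),q\rangle=\langle V_{a_\bj b_\bk b_\bl}(q),q\rangle=\langle V_{b_\bj a_\bk b_\bl}(q),q\rangle=\langle V_{b_\bj b_\bk a_\bl}(q),q\rangle=0$; hence the Euclidean norm, and therefore $H$, is conserved along each of these flows, so one may take the corresponding $F\equiv 0$. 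For $V_{\text{damp}}$ the flow is $a_\bj(t)=e^{-\lambda_\bj t}a_\bj(0)$, $b_\bj(t)=e^{-\lambda_\bj t}b_\bj(0)$, and since $\lambda_\bj\geq 0$ this gives $|\varphi_t^{\text{damp}}(q)|\leq|q|$, so again $F_{\text{damp}}\equiv 0$. Finally $V_\ell$ is the constant field $\beta^\ell$, with flow $q\mapsto q+\beta^\ell t$, so $H(\varphi_t^\ell(q))\leq H(q)+|\beta^\ell|\,t$; thus $F_\ell(t)=|\beta^\ell|t$ and $f_\ell=\E F_\ell(\tau)=|\beta^\ell|h<\infty$. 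This establishes subconservation with the functions stated in~\eqref{eqn:subcon}.

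For part~(ii), set $\lambda_*:=\min_{\bj\in\mathscr{D}}\lambda_\bj>0$ (the minimum is over the finite nonempty set $\mathscr{D}$, on which every $\lambda_\bj>0$). Writing $|q_\bj|^2=a_\bj^2+b_\bj^2$ and using $\lambda_\bj=0$ for $\bj\notin\mathscr{D}$ together with $\lambda_\bj\geq\lambda_*$ for $\bj\in\mathscr{D}$, for $q\in D_\eta$ and $t\geq 0$ I would estimate
\begin{align*}
|\varphi_t^{\text{damp}}(q)|^2
=\sum_{\bj\in\mathscr{D}}e^{-2\lambda_\bj t}|q_\bj|^2+\sum_{\bj\notin\mathscr{D}}|q_\bj|^2
\leq|q|^2-(1-e^{-2\lambda_* t})\sum_{\bj\in\mathscr{D}}|q_\bj|^2
\leq\big(1-\eta(1-e^{-2\lambda_* t})\big)|q|^2 .
\end{align*}
Taking square roots and adding $1$ yields $H(\varphi_t^{\text{damp}}(q))\leq\alpha_{\text{damp}}(t)H(q)+\big(1-\alpha_{\text{damp}}(t)\big)$ with $\alpha_{\text{damp}}(t)=\sqrt{1-\eta(1-e^{-2\lambda_* t})}$, which is strictly decreasing and maps $[0,\infty)$ into $(0,1]$; hence $\mathscr{S}$ is partially dissipative with respect to $H$ on $D_\eta$ with index $\text{damp}$, taking $G_{\text{damp}}(t)=1-\alpha_{\text{damp}}(t)\leq 1$, so $g_{\text{damp}}=\E G_{\text{damp}}(\tau)<\infty$ and $a_{\text{damp}}=\E\alpha_{\text{damp}}(\tau)\in(0,1)$.

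The only step that is not purely mechanical is the triad identity $\theta_{\bk\bl}+\theta_{\bj\bl}-\theta_{\bj\bk}=0$, which is exactly the statement that each three-mode resonant interaction of the Galerkin-truncated 2D Euler system conserves energy; once it is in hand the rest is bookkeeping. It is also worth flagging that completeness of the triad vector fields (so that they genuinely belong to $\mathscr{V}_d$ and the flows $\varphi_t$ are globally defined) is being imported from the conservation-law discussion in Section~\ref{sec:energyt}; the norm conservation just verified is precisely what underlies it.
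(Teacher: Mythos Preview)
Your argument is correct and follows essentially the same route as the paper: verify that the triad flows conserve $|q|$, that the damping flow contracts it, and that the forcing flows drift linearly, then integrate the diagonal damping ODE and use the defining inequality of $D_\eta$. The only notable addition is that you explicitly prove the identity $\theta_{\bk\bl}+\theta_{\bj\bl}-\theta_{\bj\bk}=0$ (equivalently, relative-enstrophy conservation for each triad), which the paper simply asserts; your choice $G_{\text{damp}}(t)=1-\alpha_{\text{damp}}(t)$ is marginally sharper than the paper's constant $G_{\text{damp}}\equiv 1$, but both satisfy Definition~\ref{def:pd}.
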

\begin{proof}
For part (i), note that for any $(\bj,\bk, \bl)\in \mathscr{I}$, the dynamics along the vector fields $V_{a_\bj a_\bk a_\bl}$, $V_{a_\bj b_\bk b_\bl}$, $V_{b_\bj a_\bk b_\bl}$, and $V_{b_\bj b_\bk a_\bl}$ conserves $H(q)$, implying 
\begin{align*}
F_{a_\bj a_\bk a_\bl}(t)=F_{a_\bj b_\bk b_\bl}(t)= F_{b_\bj a_\bk b_\bl}(t)=F_{b_\bj b_\bk a_\bl}(t)=0
\end{align*}
for all $t\geq 0$. 
Also, for any $q\in \R^{d}$, $\ell=1,2,\ldots, \mathfrak{m}$ and $t\geq 0$ we have 
\begin{align*}
H( \varphi_t^{\text{damp}}(q))  = \big| e^{-\Lambda t} q  \big|+1 \leq H(q) \quad \text{ and }\quad  H(\varphi_t^{\ell}(q)) = |q+ t \beta^\ell | \leq H(q) + t |\beta^\ell|.
\end{align*} 
This establishes part (i). 

To establish (ii),  observe that for any $t\geq 0$ and any $q\in \R^{d}$ with $q\in D_\eta$
\begin{align*}
H(\varphi_t^{\text{damp}}(q))= | e^{-\Lambda t}q | +1 &=  \sqrt{\sum_{\bj\in\mathscr{D}} (e^{-\lambda_\bj t}-1)|q_\bj|^2 + |q|^2} +1 \\
&\leq  \sqrt{\sum_{\bj\in\mathscr{D}} (e^{-\lambda t}-1)|q_\bj|^2 + |q|^2} +1 \\
& \leq |q|\sqrt{(e^{-\lambda t} -1)\eta +1}+1 \leq \alpha_{\text{damp}}(t) H(x)+ 1\end{align*}
where $\alpha_{\text{damp}}(t) = \sqrt{(e^{-\lambda t} -1)\eta +1}$.  \end{proof}

In order to avoid certain invariant sets in the splitting, we need further structure in our forcing terms.  Specifically,  for any $\bj, \bk, \bl\in \mathscr{Z}^2$  and any $\beta\in \R^{d}$ let 
\begin{align}
\label{def:NRC}
\Delta_{\bj\bk\bl}^1(\beta)&:= (\beta_{a_\bj})^2 \bigg( \frac{1}{|\bj|^2}- \frac{1}{|\bk|^2}\bigg) - (\beta_{a_\bl})^2\bigg( \frac{1}{|\bk|^2}- \frac{1}{|\bl|^2}\bigg), \\ 
\nonumber \Delta_{\bj\bk\bl}^2(\beta)&:=(\beta_{a_\bj})^2 \bigg( \frac{1}{|\bj|^2}- \frac{1}{|\bk|^2}\bigg) -( \beta_{b_\bl})^2\bigg( \frac{1}{|\bk|^2}- \frac{1}{|\bl|^2}\bigg).\end{align}
\begin{definition}
We say that the forcing $V_1, V_2, \ldots, V_\mathfrak{m}$ as in~\eqref{def:force} in the splitting $\mathscr{S}$ is \emph{non-resonant for} $\bj\in \mathscr{Z}^2$, denoted by $\bj \in \mathscr{F}$, if there exists $\ell=1,2, \ldots, \mathfrak{m}$ such that for any $\bk, \bl \in \mathscr{Z}$ with $\bj+\bk=\bl$ and $|\bj|\neq |\bk|$ we have 
\begin{align}
\beta^\ell_{a_\bj}\neq 0, \quad \Delta_{\bj\bk\bl}^1(\beta^\ell)\neq 0, \quad \text{ and } \quad \Delta_{\bj\bk\bl}^2(\beta^\ell)\neq0.\end{align}
\end{definition}

\begin{remark}
\label{rem:triad}
Non-resonant forcing will be used below to ensure energy transfer for the dynamics along the triads $V_{a_\bj a_\bk a_\bl}, V_{a_\bj b_\bk b_\bl}, V_{b_\bj a_\bk b_\bl}, V_{b_\bj b_\bk a_\bl}$ for $(\bj, \bk, \bl) \in \mathscr{I}$. In particular, there are ``bad" initial conditions in the associated dynamics where the energy concentrates in one mode and does not transfer energy to the other two.  Such initial conditions exist even at large values of the phase space.  The use of non-resonant forcing will allow us to push the dynamics sufficiently far away from such initial states.  See Remark~\ref{rem:tripinitialdata}, Figure~1 and Figure~2 below for more details of this point. 
\end{remark}

We next provide some simple conditions which ensure that the forcing $V_1, \ldots, V_\mathfrak{m}$ is non-resonant for a given $\bj \in \mathscr{Z}^2$.  

\begin{example}
Suppose that $\beta^1$ as in~\eqref{eqn:betaL} is of the form 
\begin{align*}
\beta^1= \beta_{a_\bj}^1e_{a_\bj} 
\end{align*}
 for some $\beta_{a_\bj}^1\neq0$. In particular, the only non-zero term in~\eqref{eqn:betaL} with $\ell=1$ is $\beta^1_{a_\bj}$. It then follows that the forcing $V_1, \ldots, V_\mathfrak{m}$ is non-resonant for $\bj$; that is, $\bj \in \mathscr{F}$.   \end{example}
\begin{example}
Suppose that $\bj, \bk\in \mathscr{Z}^2$ are such that $|\bj|=|\bk|$ and $\bj\neq \bk$.  If $\beta^1$ as in~\eqref{eqn:betaL} has the form 
\begin{align*}
\beta^1= \beta_{a_\bj}^1 e_{a_\bj}+ \beta^1_{a_\bk} e_{a_\bk}
\end{align*}
for some $\beta_{a_\bj}^1\neq0$ and $\beta_{a_\bk}^1\neq 0$, then the forcing $V_1, V_2, \ldots, V_\mathfrak{m}$ is non-resonant for $\bj$ and $\bk$; that is, $\bj,\bk \in \mathscr{F}$. 
\end{example}

Throughout this section, we will employ the following assumptions on $\mathscr{D}$ and $\mathscr{F}$. 
\begin{assumption}
\label{assump:SNS}
One of the following conditions is met:
\begin{itemize}
\item[(DF1)]  $\{(0,1), (1,0 )\} \subset \mathscr{F}$ and $\{(1,0), (0,1), (N,N)\} \subset \mathscr{D}.$ 
\item[(DF2)]  $\{(0,1), (1,0), (1,1) \} \subset\mathscr{F}$ and either $\{ (1,0), (N, N) \} \subset \mathscr{D}$ or  $\{ (0,1), (N, N) \} \subset \mathscr{D}$.\end{itemize} 
\end{assumption}

\begin{remark}
Note that Assumption~\ref{assump:SNS} (DF1) means that the forcing $V_1, \ldots, V_\mathfrak{m}$ in the splitting $\mathscr{S}$ is non-resonant for the lowest modes $(0,1)$ and $(1,0)$, and that the lowest and highest modes are damped, i.e. $(1,0), (0,1), (N,N) \in \mathscr{D}$.   
\end{remark}

\begin{remark}
Assumption~\ref{assump:SNS} aligns with study of out-of-equilibrium fluxes in turbulence.  The damping assumption on high frequencies corresponds to the effect of viscosity on high Fourier modes.  Damping on low modes corresponds to the effect of friction at large scales.  Damping on the large scales is needed to prevent energy build up since our model is on the two-dimensional periodic box. The absence of dissipation on the intermediate modes, which should be thought of as the systems' inertial range in turbulence theory, is important here because we want to study energy transfer between low modes and high modes created by the nonlinearity.  Since the system is dissipative, we must force the system to keep it out of the zero state.  We choose to only force at the large scales to emphasize that we are interested in studying in how energy is transferred through the inertial range (i.e. the middle modes) to the dissipative scale (i.e. the high modes).  \end{remark}

\begin{theorem}
\label{thm:SNS}
Let $h\in (0,1)$.  Suppose that Assumption~\ref{assump:SNS} holds and let $\psi(x):= \log x$ for $x\geq 2$ and $ \psi(x):= \log 2$ for $x\leq 2$.  Then there exist constants $c=c(N, h)>0, C=C(N,h)>0 $ such that the global estimate holds
\begin{align}
\label{eqn:SNSmainbound1}
\mathcal{P} H -H \leq - \frac{c}{\psi(H)} H + C.  
\end{align}  
\end{theorem}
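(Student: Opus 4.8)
The plan is to apply the general machinery of Section~\ref{sec:GRPD}, specifically Theorem~\ref{thm:check} together with Proposition~\ref{prop:Lyop} and the remark following Lemma~\ref{lem:KBmeas}, exactly as was done for Lorenz~'96 in Theorem~\ref{thm:lblor}. The function $G$ in~\eqref{eq:LyapGen2} will end up being (comparable to) $G(t) = ct/\psi(t)$, which is the $G_0$ of Example~\ref{ex:stretch}; the $1/\psi(H)$ factor is precisely the signature that the probability $p(H(x))$ of reaching the dissipative region decays logarithmically in the energy, i.e. $p(H(x)) \asymp 1/\log H(x)$, rather than being bounded below by a constant as in the Lorenz case. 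By Lemma~\ref{lem:scpdNSE} the splitting $\mathscr{S}$ is already known to be subconservative with respect to $H$ and partially dissipative on $D_\eta$ with index $\text{damp}$, so the only thing left to prove is an entrance estimate of the form
\begin{align*}
\PP_x\big(A_{j(x)}^{\text{damp}}(D_\eta)\big) \geq \frac{c}{\psi(H(x))} \qquad \text{for all } x \in H_{>R},
\end{align*}
with $j(x) \leq n-1$ for some fixed $n = n(N)$. Once this is in hand, Theorem~\ref{thm:check} gives that the sampled chain $(X_{jn})$ satisfies the hypotheses of Proposition~\ref{prop:Lyop} with $p(\cdot) = c/\psi(\cdot)$, hence $\mathcal{P}_n H - H \leq -(1-\alpha)\,p(H)\,H + f_1 + f_2$, and unwinding this to a one-step bound (using subconservation for the intermediate steps, as in Theorem~\ref{thm:main1}) yields~\eqref{eqn:SNSmainbound1}.

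The heart of the matter, then, is the entrance estimate, and I would prove it by a region-by-region argument mirroring the proof of Theorem~\ref{thm:lblor}: partition the high-energy shell $H_{>R}$ according to how hard it is to steer the state into $D_\eta$, and on each region exhibit an explicit permutation $\sigma \in S_{|\mathscr{S}|}$ and an explicit window for the clocks $\tau$ under which the composition $\Phi^\sigma_\tau$ carries $x$ into $D_\eta$ (or into a region one step closer to $D_\eta$), with a quantitative lower bound on the probability of that clock window and of drawing that ordering. The building blocks are: (a) the triad flows $V_{a_\bj a_\bk a_\bl}$ etc., which rotate energy among three modes at a speed proportional to the amplitude of the ``pivot'' mode — these play the role of the $V_i$ in Lorenz and let us move energy from a populated mode toward the damped modes $(1,0),(0,1),(N,N)$ through a chain of triads; (b) the forcing fields $V_\ell$, which inject a definite amount into designated modes and, via the non-resonance hypothesis $\mathscr{F}$, let us escape the ``bad'' initial conditions for the triad flows (Remark~\ref{rem:triad}) where energy is trapped in a single mode; and (c) the damping flow $V_{\text{damp}}$ itself, which on $D_\eta$ contracts $H$. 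The role of Assumption~\ref{assump:SNS} is to guarantee that such a transfer chain — from whatever mode currently holds a $\geq 1/d$ fraction of the energy, through the inertial range, to a damped mode — actually exists and that its first link can be un-stuck using the non-resonant forcing; one would trace through the two cases (DF1) and (DF2) separately, much as the Lorenz proof handled $j = 2, 3, 4 \leq j \leq d$ separately.

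The main obstacle — and the reason the rate degrades to stretched-exponential — is the probability bookkeeping in step (b). To rotate energy out of a mode of amplitude $\sim |x|$ along a triad flow, one needs the pivot mode to have amplitude bounded below; the forcing pumps it up at a fixed rate, so to reach a useful amplitude one must flow along $V_\ell$ for a time $\tau$ that grows like $\log|x|$ (to overcome the exponential decay $e^{-\Lambda t}$ if the pivot is damped, or to accumulate enough amplitude relative to the large $|x|$), and $\PP(\tau \gtrsim \log|x|) = e^{-c\log|x|/h} = |x|^{-c/h}$ — polynomially small. More carefully, one wants $\PP(\tau \in [\text{const}\cdot\log|x|, \text{const}\cdot\log|x| + h]) \asymp |x|^{-c/h}$, and a polynomially small probability is *not* what gives $1/\log H$. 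So the real work is to arrange the geometry so that each steering step costs only a clock window of fixed length $h$ (probability bounded below by a constant), except possibly one step whose cost is the logarithmic factor, and to check that this one logarithmic step can be taken with probability $\gtrsim 1/\log H$ rather than $H^{-c/h}$ — this is where the precise structure of the rotation angles, the $\theta_{\bk\bl}$ coefficients, and a rotation-number / small-divisor estimate like those in Lemma~\ref{lem:down} and Lemma~\ref{lem:energysame} (summing a geometric series of clock-interval probabilities over the integer multiples of $\pi$) must be redone in the Euler setting. I expect controlling these angle estimates uniformly in $N$, and verifying that the non-resonance conditions in the definition of $\mathscr{F}$ are exactly what is needed to keep the transferred energy from collapsing back, to be the most delicate and lengthy part; by contrast, assembling the pieces into~\eqref{eqn:SNSmainbound1} via Theorem~\ref{thm:check} is routine given the Lorenz template.
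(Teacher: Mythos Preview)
Your high-level reduction is correct: by Lemma~\ref{lem:scpdNSE} (subconservation and partial dissipation on $D_\eta$) and Theorem~\ref{thm:check}, the theorem follows from an entrance estimate of the form $\PP_q(A_0^{\text{damp}}(D_\eta)) \geq p_*/\log H(q)$ for $q \in H_{>R}$, and this is exactly what the paper isolates as Theorem~\ref{thm:NSElb}. But your account of \emph{where} the $1/\log H$ factor comes from is wrong, and without the correct mechanism you would not be able to carry out the entrance estimate.

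The logarithm does not arise from needing large forcing times (as you yourself note, that would give a polynomial penalty, not a logarithmic one), nor from a rotation-number estimate over multiples of $\pi$ in the style of Lemma~\ref{lem:down}. The triad ODE~\eqref{eqn:trips3} is \emph{not} a rotation when $|\bj|<|\bk|<|\bl|$: it is an Euler top with the two conserved quantities~\eqref{eqn:relen}--\eqref{eqn:relens}, and its solutions are written in Jacobi elliptic functions $\text{sn}_\rho,\text{cn}_\rho,\text{dn}_\rho$ with modulus $\rho\in(0,1)$ and quarter-period $K_\rho$. The phase portrait on each enstrophy sphere has a separatrix at $E=\mathcal{E}/|\bk|^2$; near it $\rho\to 1$ and $K_\rho = -\tfrac12\log\epsilon + O(1)$, where $\epsilon$ is the distance to the separatrix (Lemma~\ref{lem:asyp}). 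The non-resonant forcing, applied for a \emph{bounded} clock window (e.g.\ $\tau_{10}\in[1,2]$), is only guaranteed to move the state off the separatrix by order $\delta^2=H^{-2}$ --- this is exactly what the quantities $\Delta^i_{\bj\bk\bl}(\beta)$ in~\eqref{def:NRC} control --- so the resulting period is $\asymp|\log\delta|=\log H$. Within each period the ``thermalized'' window, where all three modes have order-one amplitude, has length $O(1)$; hence the exponential clock lands in it with probability $\gtrsim 1/K_\rho\gtrsim 1/\log H$ (Theorem~\ref{thm:therm}). That is the entire source of the logarithm.

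Two smaller corrections. First, no chain of triads ``through the inertial range'' is needed: for any $\bk\notin\mathscr{D}$ one of $(1,0)+\bk$ or $(0,1)+\bk$ lies in $\ZZ^2$, so a \emph{single} triad connects $\bk$ directly to a damped low mode (under (DF2) one extra triad of the pure-rotation type in Lemma~\ref{lem:spinners} is appended). The entrance therefore happens within one step of the chain, $j(x)=0$, and no unwinding from $\mathcal{P}_n$ to $\mathcal{P}$ is required. Second, the degenerate case $|\bj|=|\bk|$ \emph{does} reduce to a pure rotation~\eqref{eqn:NS2} and is handled by a Lorenz-style argument (Lemma~\ref{lem:spinners}), but that case costs only a constant; the logarithm comes exclusively from the generic $|\bj|<|\bk|$ case via the elliptic-function analysis above.
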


\begin{remark}
The difference between the conclusions of Theorem~\ref{thm:SNS} and the analogous result for Lorenz '96 (Theorem~\ref{thm:main1}) is the presence of the logarithmic correction in the bound~\eqref{eqn:SNSmainbound1}.  Recall that in Section 2, this correction leads to stretched exponential moments, versus exponential moments in the case of Lorenz '96, of the first return time to a large compact set in the phase space.  At the level of the dynamics of the random splitting for Euler, even if one perturbs initial conditions using the forcing away from the ``bad" states (cf. Remark~\ref{rem:triad}), the triad dynamics can still spend the bulk of its time with energy concentrated in a single mode.  Moreover, the fraction of time it spends in the thermalized state where each mode is non-zero, order 1 (as $H\rightarrow \infty$) is at least proportional to $1/|\log H|$ as $H\rightarrow \infty$.  Contrasting this with the random splitting for Lorenz '96, provided the dynamics along $V_i$ as in~\eqref{eqn:vf1} moves at order 1 speed (which happens by using the forcing), the dynamics spends a positive, order 1 time in the thermalized state.  See Remark~\ref{rem:tripinitialdata}, Figure 1 and Figure 2 below for further details of these points.                 
\end{remark}

\begin{remark}
Although the logarithmic correction in~\eqref{eqn:SNSmainbound1} is tantalizing, it is unclear whether a better estimate, e.g. the same qualitative one as in the case of Lorenz '96, is possible for the random splitting of Euler.  Indeed, it may be possible to utilize more complicated dynamical structures than used in the proof below to produce a stronger bound.    
\end{remark}

In order to prove Theorem~\ref{thm:SNS}, combining Lemma~\ref{lem:scpdNSE} with Theorem~\ref{thm:check} it suffices to show the following result.  Below, recall the definition of $A_n^\ell(D)$ given in~\eqref{eqn:union}.
\begin{theorem}  
\label{thm:NSElb}
Let $h\in (0,1)$ and suppose that Assumption~\ref{assump:SNS} is satisfied.  Then there exists constants $p_*>0, R\geq 2$ and $\eta \in (0,1)$ such that 
\begin{align}
\PP_q( A_0^\text{\emph{damp}}(D_\eta)) \geq \frac{p_*}{\log(H(q))}
\end{align} 
for all $q\in H_{> R}$. 
\end{theorem}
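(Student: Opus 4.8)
To prove Theorem~\ref{thm:NSElb}, the plan is to reproduce, for the Euler splitting, the region-by-region construction behind the proof of Theorem~\ref{thm:lblor}, but with the planar rotations along $V_1,\dots,V_d$ there replaced by the exactly integrable three-wave dynamics along the triads $V_{a_\bj a_\bk a_\bl}$, $V_{a_\bj b_\bk b_\bl}$, $V_{b_\bj a_\bk b_\bl}$, $V_{b_\bj b_\bk a_\bl}$. Fix $q\in H_{>R}$. If $q\in D_\eta$ there is nothing to prove, since $\PP_q(A_0^{\text{damp}}(D_\eta))\ge \PP(u_{10}=\text{damp})=|\mathscr{S}|^{-1}$; so assume $q\notin D_\eta$. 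I would produce a deterministic string $(W_1,\dots,W_r,\text{damp})$ of distinct elements of $\mathscr{S}$ of length $r+1\le|\mathscr{S}|$, depending on $q$ only through which mode carries the bulk of its energy, together with windows $I_1,\dots,I_r$ for the clocks $\tau_{10},\dots,\tau_{r0}$, such that on the event $\bigcap_{k=1}^{r+1}\{u_{k0}=W_k\}\cap\bigcap_{k=1}^{r}\{\tau_{k0}\in I_k\}$ one has $X_{0,k}\notin D_\eta$ for $k<r$, $X_{0,r}\in D_\eta$, and the $(r+1)$st flow is $V_{\text{damp}}$; this event lies inside $A_{0,r}^{\text{damp}}(D_\eta)\subset A_0^{\text{damp}}(D_\eta)$. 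Since $u_0$ is uniform on $S_{|\mathscr{S}|}$, the prefix event $\bigcap_k\{u_{k0}=W_k\}$ has a positive probability $c(N)$ depending only on $N$, so the task reduces to bounding $\prod_{k=1}^{r}\PP(\tau_{k0}\in I_k)$ below by a constant times $1/\log H(q)$.

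The dynamical input consists of three lemmata parallel to Lemmas~\ref{lem:down}, \ref{lem:speed}, \ref{lem:energysame}. Each triad vector field conserves $H$ (Lemma~\ref{lem:scpdNSE}) and, on its three active coordinates, has a second quadratic invariant, so its flow is periodic and given by Jacobi elliptic functions; it has a ``thermalized'' regime in which all three active coordinates are comparable, and ``concentrated'' regimes near the invariant lines where all the energy of the triad sits in a single mode --- the ones where the energized mode is linearly unstable being the ``bad'' states of Remark~\ref{rem:triad}. An \emph{energy maintenance} lemma (analogue of Lemma~\ref{lem:energysame}) would say that energy stored in a mode is preserved up to a fixed factor under every flow of $\mathscr{S}$ not involving that mode and under the forcing, with probability bounded below. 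A \emph{seeding} lemma (analogue of Lemma~\ref{lem:speed}) would say that, using a non-resonant forcing $V_\ell$ --- this is exactly where $\beta^\ell_{a_\bj}\ne0$, $\Delta_{\bj\bk\bl}^1(\beta^\ell)\ne0$, $\Delta_{\bj\bk\bl}^2(\beta^\ell)\ne0$ from Assumption~\ref{assump:SNS} are used --- one can, with probability bounded below uniformly in $q$ and $h$, plant a seed of size of order $h$ in a second mode of a chosen adjacent triad while keeping a fixed fraction of the energy in the first, and do so in a non-degenerate way, so that the subsequent flow along that triad genuinely visits the thermalized regime rather than staying pinned near a concentrated state. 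The new ingredient is a \emph{transfer} lemma (analogue of Lemma~\ref{lem:down}): if a triad carries energy of order $|q|^2\gtrsim R^2$ in a single linearly unstable mode and a seed of size of order $h$ in a second mode, then the instability saturates on a time scale $T_{\mathrm{sat}}\asymp|q|^{-1}\log(|q|/h)$, after which the solution oscillates with period $\asymp T_{\mathrm{sat}}$, spending a fraction of order $1/\log|q|$ of each period in a thermalized sub-window on which any prescribed sideband of the triad holds a definite fraction of the triad's energy.

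The factor $1/\log H(q)$ enters at the final flow. I would choose the chain so that $W_r$ is a triad whose unstable mode is the one into which the bulk of $q$'s energy has been routed and one of whose sidebands is a damped mode $\bj\in\mathscr{D}$ --- possible because the triad interaction graph on $\ZZ^2$ is connected and because of the placement of $(1,0)$, $(0,1)$, $(N,N)$ in $\mathscr{D}$ in Assumption~\ref{assump:SNS} --- routing $q$'s dominant energy through non-damped modes by alternating seeding-forcing flows and transfer triads, with the windows $I_1,\dots,I_{r-1}$ chosen so that the maintenance, seeding and transfer lemmas apply; then each $\PP(\tau_{k0}\in I_k)$ for $k<r$ is a positive constant and $r\le r(N)$. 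For the last flow set $I_r=[T_{\mathrm{sat}},\infty)$: since $h\in(0,1)$ is fixed while $R$ is taken large, $T_{\mathrm{sat}}\asymp|q|^{-1}\log(|q|/h)\to0$, so $\PP(\tau_{r0}\ge T_{\mathrm{sat}})\to1$, and conditioned on this the residual $\tau_{r0}-T_{\mathrm{sat}}$ still dwarfs the (constant) oscillation period $\asymp T_{\mathrm{sat}}$, so $\tau_{r0}$ samples the phase of the periodic triad orbit essentially uniformly on the scale of one period; hence $X_{0,r}=\varphi^{W_r}_{\tau_{r0}}(X_{0,r-1})$ lands in the thermalized sub-window where the damped mode $\bj$, and therefore $\mathscr{D}$, holds a fixed fraction of the triad energy with probability at least a constant times $\frac{|q|^{-1}}{|q|^{-1}\log|q|}\gtrsim\frac{1}{\log|q|}$. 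Provided $\eta$ is taken small (relative to $1/|\mathscr{S}|$ and to the per-triad transfer fractions), this puts $X_{0,r}\in D_\eta$, while the routing through non-damped modes, plus the fact that the forcing changes the total and the damped energy only by $O(h)$ per step, keeps $X_{0,k}\notin D_\eta$ for $k<r$. Multiplying, $\PP_q(A_0^{\text{damp}}(D_\eta))\ge c(N)\,(\text{const})^{r}\,\frac{c'}{\log H(q)}\ge\frac{p_*}{\log H(q)}$ for some $p_*=p_*(N,h)>0$.

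The main obstacle is the transfer lemma: showing, uniformly over the energy level $|q|\ge R$, that the saturation time and the oscillation period of a triad energized in a single unstable mode with an order-$h$ seed are both of order $|q|^{-1}\log(|q|/h)$, and that each period contains a thermalized sub-window of length of order $|q|^{-1}$ on which a prescribed sideband holds a fixed fraction of the energy. This requires an explicit analysis of the elliptic-function solution of the three-wave system, together with the non-resonance conditions ($\Delta^1,\Delta^2\ne0$) to guarantee that the \emph{forced} seed state is not itself pinned near a concentrated configuration of the relevant triad (see Remark~\ref{rem:triad}). A secondary difficulty is the bookkeeping along the chain: controlling the dilution of the routed energy through $r(N)$ successive triads and checking that the intermediate states $X_{0,1},\dots,X_{0,r-1}$ never enter $D_\eta$, which is what forces the small choice of $\eta$.
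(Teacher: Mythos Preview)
Your general architecture---building a deterministic prefix of flows ending in $V_{\text{damp}}$, identifying the $1/\log H$ cost with a single thermalization step along a triad started near a concentrated state, and tracing that cost to the logarithmic period of the Jacobi-elliptic orbit near the separatrix $E=\mathcal{E}/|\bk|^2$---matches the paper. Your transfer and seeding lemmas are exactly Theorem~\ref{thm:therm} together with the forcing perturbation in Remark~\ref{rem:tripinitialdata}, and your heuristic for the $1/\log|q|$ thermalized fraction is the right one.

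The gap is in the routing. You propose to push the dominant energy through a chain of $r(N)$ triads and assert that each intermediate transfer costs only a constant. But by your own transfer lemma, any triad flow started in a concentrated configuration (one mode of order $|q|$, the others $O(1)$ or seeded at $O(h)$) hits the thermalized window only with probability $\asymp 1/\log|q|$. After one thermalization, the next triad in your chain shares one large mode with the previous one while its other two modes are again small, so it is again near its separatrix and again costs $1/\log|q|$. A chain of length $r$ therefore produces $(1/\log H)^r$, not $1/\log H$, and the argument collapses. Nothing in your seeding or maintenance lemmas upgrades an intermediate thermalization to constant probability.

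The paper avoids this by noting that no chain is needed. For every $\bk\in\ZZ^2\setminus\mathscr{D}$ at least one of $\bk+(1,0)$ or $\bk+(0,1)$ remains in $\ZZ^2$ (since $\bk\neq(N,N)$), so there is a triad $(\bj,\bk,\bl)\in\mathscr{I}$ with $\bj\in\{(1,0),(0,1)\}$ directly. Under (DF1) both of these are damped, so a \emph{single} application of Theorem~\ref{thm:therm} to $V_{a_\bj a_\bk a_\bl}$---possibly preceded by one forcing step $V_\ell$ to push off the separatrix using $\Delta_{\bj\bk\bl}^1(\beta^\ell)\neq 0$---already lands a fixed fraction of the energy in $\mathscr{D}$; the prefix has length at most three. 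Under (DF2), if the available $\bj$ happens to be the undamped one of $(1,0),(0,1)$, one appends a second triad step with $|\bj|=|\bk'|=1$; but that is the pure-rotation case of Lemma~\ref{lem:spinners}, which has no separatrix and contributes a positive constant. In either case there is exactly one $1/\log H$ factor, independent of $N$.
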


In order to prove Theorem~\ref{thm:NSElb}, we need to first establish some key properties of the dynamics defined in~\eqref{def:trips}. Similar to the case of Lorenz '96 in Section~\ref{sec:lor}, we will provide conditions under which energy is transferred among the directions in these vector fields.  All of this analysis is carried out in Section~\ref{sec:prelimob}, Section~\ref{sec:Case1} and Section~\ref{sec:energyt}.  Finally, given the facts deduced in these sections, in Section~\ref{sec:proofpropNSE} we conclude Theorem~\ref{thm:NSElb}.

\subsection{Preliminary observations}  
\label{sec:prelimob}

%\textcolor{red}{Just use the name stochastic triad, refer to Lyapunov paper with you and andrea agazzi.  Euler top see Remark 3.5 or previous two papers for further discussion.}
%
%\textcolor{red}{SInce on finite box, enstrophy could accumulate at low/modes }
Note first that the dynamics along every triad $V_{a_\bj a_\bk a_\bl}$, $V_{a_\bj b_\bk b_\bl}$, $V_{b_\bj a_\bk b_\bl}$, $V_{b_\bj b_\bk a_\bl}$ with $(\bj,\bk, \bl) \in \mathscr{I}$  can be written in the form 
\begin{align}
\label{eqn:trips2}
\dot{X}&= \theta_{\bk\bl} Y Z, \qquad \dot{Y}= \theta_{\bj\bl} XZ, \qquad \dot{Z}= -\theta_{\bj\bk} XY. 
\end{align} 
This is clear for $V_{a_\bj a_\bk a_\bl}$, $V_{a_\bj b_\bk b_\bl}$ and $V_{b_\bj a_\bk b_\bl}$.  For $V_{b_\bj b_\bk a_\bl}$, we simply let $X= b_\bj,  Y=  b_\bk$ and $Z=- a_\bl$ to arrive at~\eqref{eqn:trips2}.  Note that equation~\eqref{eqn:trips2} is the equation of an Euler spinning top, which has two conserved quantities making it integrable.  See Section~\ref{sec:energyt} below and \cite[Remark 3.5]{AMM_22} for further details.   

It will be useful to rescale~\eqref{eqn:trips2} by $H=H(q)=H(a,b)$, and we assume below that $H\geq 3$.  Because $H$ is conserved by the dynamics along every vector field in $\mathscr{S} - \{ V_\text{damp}, V_{1}, \ldots,V_{\mathfrak{m}}  \}$, we treat it as a constant when analyzing~\eqref{eqn:trips2}.  For simplicity, we define $\delta = 1/H$ and set $x=\delta X, y=\delta Y, z=\delta Z$ where $X,Y,Z$ are as in~\eqref{eqn:trips2}.  Note then that the triple $(x,y,z)$ satisfies the equation
\begin{align}
\label{eqn:trips3}
\dot{x}= \frac{\theta_{\bk \bl}}{\delta} yz, \qquad \dot{y}= \frac{\theta_{\bj\bl}}{\delta} xz, \qquad \dot{z}=- \frac{\theta_{\bj\bk}}{\delta}xy.
\end{align}  
Under certain assumptions on the initial data as well as the indices $\bj,\bk$, throughout this section we study the behavior of equation~\eqref{eqn:trips3} as $\delta \rightarrow 0$, i.e. as $H\rightarrow \infty$.

We break the analysis apart according to two cases; namely, when $|\bk|=|\bj|$ or $|\bk|>|\bj|$.  Both cases will always be under the hypotheses that $\bl= \bj+\bk$ and $ \bj \cdot \bk^\perp \neq 0$.   

\subsection{Case 1: $(\bj, \bk, \bl ) \in \mathscr{I}$ with $|\bk|=|\bj|$} 
\label{sec:Case1}
In this case, $\theta_{\bj\bk}=0$ and $\theta_{\bk\bl}=-\theta_{\bj\bl}\neq 0$.  Thus equation~\eqref{eqn:trips3} reduces to
\begin{align}
\label{eqn:NS2}
\dot{x}= -\frac{\theta_{\bj\bl}}{\delta} yz, \qquad \dot{y} =\frac{\theta_{\bj\bl}}{\delta}xz, \qquad  \dot{z}=0. 
\end{align}
The unique solution of~\eqref{eqn:NS2} with initial condition $(x_0, y_0, z_0)\in \R^3$ is explicitly given by 
\begin{align}
\label{eqn:solNS2}
      x_t = \rho_0 \cos\big( \tfrac{\theta_{\bj\bl}z_0}{\delta} t + \theta_* \big),  \qquad y_t= \rho_0 \sin\big( \tfrac{\theta_{\bj\bl}z_0}{\delta} t + \theta_* \big) \qquad \text{ and } \qquad z_t=z_0, \,\,\, t\geq 0,
      \end{align}
      where $\rho_0= \sqrt{x_0^2+y_0^2}$ and $\theta_* \in [0, 2\pi)$ is the unique  point for which 
      \begin{align*}
      (x_0, y_0) = \rho_0 ( \cos \theta_*,  \sin \theta_*).  
      \end{align*}
Recalling that $\tau \sim \exp(1/h)$ with $h\in (0,1)$, using these expressions we will prove the following lemma. 
\begin{lemma}
\label{lem:spinners}
Suppose that $(x_t, y_t, z_t)$ solves~\eqref{eqn:NS2} with initial condition $(x_0, y_0, z_0)$ satisfying $|(x_0, y_0, z_0)|\leq 1$.  If $\max\{ |x_0|, |y_0| \} \geq c_0$ and $|z_0|\geq c_1 \delta$ for some constants $c_0>0, c_1 >0$ independent of $\delta$, then with probability $q=q(c_0, h)>0$ depending only on $c_0, h$ we have
\begin{align*}
\min \{ |x_\tau|, |y_\tau|\} \geq c_0/2.    
\end{align*}
%\begin{itemize}
%\item[(i)]  If $|x_0| \geq c_0$ for some $c_0\in (0, 1] $ independent of $\delta$, then   
%with probability $q_i=q_i(c_0, h)>0$ depending only on $c_0, h$ we have $ |x_\tau| \geq c_0/\sqrt{2}$.
%\item[(ii)] If $|y_0| \geq c_0$ for some $c_0\in (0,1]$ independent of $\delta$, then with probability $q_{ii}=q_{ii}(c_0, h)>0$ depending only on $c_0, h$ we have $ |y_\tau| \geq c_0/2$.
%\item[(iii)] \end{itemize} 

\end{lemma}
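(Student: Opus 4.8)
The plan is to run everything off the closed-form solution~\eqref{eqn:solNS2}. Write $\rho_0=\sqrt{x_0^2+y_0^2}$ and $\omega=\theta_{\bj\bl}z_0/\delta$, and let $\theta_*\in[0,2\pi)$ be the phase determined by $(x_0,y_0)=\rho_0(\cos\theta_*,\sin\theta_*)$, which is well defined since $\rho_0\geq\max\{|x_0|,|y_0|\}\geq c_0>0$; note also $\rho_0\leq|(x_0,y_0,z_0)|\leq 1$. Then~\eqref{eqn:solNS2} gives
\[
\min\{|x_\tau|,|y_\tau|\}=\rho_0\,\min\bigl\{\,|\cos(\omega\tau+\theta_*)|,\ |\sin(\omega\tau+\theta_*)|\,\bigr\},
\]
so, since $\rho_0\geq c_0$, it is enough to show that with probability at least a positive constant the angle $\omega\tau+\theta_*$ falls, modulo $2\pi$, into the fixed set $\mathcal{G}:=\{\phi:\min\{|\cos\phi|,|\sin\phi|\}\geq\tfrac12\}$; this $\mathcal{G}$ is a disjoint union of four arcs of length $\pi/6$, of total Lebesgue measure $2\pi/3$.

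Next I would pin down the rotation frequency. In Case 1 we have $\theta_{\bj\bl}\neq 0$, so the hypothesis $|z_0|\geq c_1\delta$ yields $|\omega|=|\theta_{\bj\bl}|\,|z_0|/\delta\geq\omega_0:=c_1|\theta_{\bj\bl}|>0$; in particular $\omega\neq 0$. Hence $t\mapsto\omega t+\theta_*$ covers the circle with period $2\pi/|\omega|$, and the set $\mathcal{G}_{\theta_*}:=\{t\geq 0:\omega t+\theta_*\in\mathcal{G}\ (\mathrm{mod}\ 2\pi)\}$ is a disjoint union of intervals, $(2\pi/|\omega|)$-periodic, with Lebesgue measure exactly $(2\pi/3)/|\omega|$ inside each window $[\,2\pi k/|\omega|,\ 2\pi(k+1)/|\omega|\,]$, $k\in\Z_{\geq 0}$. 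Integrating the $\exp(1/h)$ density over $\mathcal{G}_{\theta_*}$ window by window, bounding the density below by its value at the right endpoint of each window, and summing the resulting geometric series yields
\[
\PP(\tau\in\mathcal{G}_{\theta_*})\ \geq\ \sum_{k\geq 0}\frac1h\,e^{-2\pi(k+1)/(|\omega|h)}\cdot\frac{2\pi}{3|\omega|}\ =\ \frac{\mu}{3\,(e^{\mu}-1)},\qquad \mu:=\frac{2\pi}{|\omega|h}.
\]
Because $x\mapsto x/(e^x-1)$ is positive and decreasing on $(0,\infty)$ and $\mu\leq\mu_*:=2\pi/(\omega_0 h)$, the last quantity is at least $q:=\mu_*/\bigl(3(e^{\mu_*}-1)\bigr)>0$, a constant depending only on $h$ and on the frequency lower bound $\omega_0=c_1|\theta_{\bj\bl}|$ (hence uniform in $\delta$ and in the initial phase $\theta_*$), which is the asserted bound.

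The only genuinely delicate step is making the hitting estimate in the last display uniform over the whole admissible range of frequencies, $|\omega|\in[\omega_0,\infty)$: for large $|\omega|$ a crude one-period bound decays like $1/|\omega|$, and it is precisely the summation over all periods $k\geq 0$ --- equivalently the fact that $\mu/(e^{\mu}-1)\to 1$ as $\mu\to 0^+$ --- that rescues the large-frequency regime, while the small-frequency regime is covered by the a priori bound $|\omega|\geq\omega_0>0$ coming from $\theta_{\bj\bl}\neq 0$. The remaining ingredients --- the explicit description of $\mathcal{G}$, the periodic structure of $\mathcal{G}_{\theta_*}$, and the geometric summation --- are routine computations.
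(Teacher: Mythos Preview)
Your proof is correct and follows essentially the same approach as the paper: identify a ``good'' set of angles on the circle where both $|\cos|$ and $|\sin|$ are bounded below, use the periodicity of $t\mapsto\omega t+\theta_*$ to express the hitting probability as a geometric sum over periods, and control this sum uniformly via the lower bound $|\omega|\geq c_1|\theta_{\bj\bl}|$. The only cosmetic differences are that the paper works with a single arc $[\pi/6,\pi/4]$ per period (starting the sum at $k=1$ to sidestep the phase $\theta_*$) and evaluates the exponential integral exactly, whereas you use the full set $\mathcal{G}$ and a cruder density bound; both routes give the same qualitative conclusion, and both exhibit the same dependence of the final constant on $c_1|\theta_{\bj\bl}|$ (not just on $c_0,h$ as the lemma's statement literally asserts).
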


\begin{proof}
%The proof of the second part (ii) is nearly identical to the proof of part (i), so we only prove parts (i) and (iii). 
%
%For part (i), suppose first that $|z_0| \leq \epsilon \delta/|\theta_{j\bl}|$ for some small, to be determined, parameter $\epsilon >0$.  Then on the event $\{ \tau \leq h \}$, which has probability $1-e^{-1}$, we have that 
%\begin{align*}
%\Big| \sin \bigg( \frac{\theta_{j\bl} z_0 \tau}{\delta}\bigg)\Big| \leq \frac{|\theta_{j\bl} z_0| h}{\delta}< \epsilon. 
%\end{align*} 
%Hence, using sum-angle formulas, we find that 
%\begin{align*}
%x_\tau= x_\tau- \rho_0 \cos (\theta_*) \cos\bigg( \frac{\theta_{j\bl} z_0 \tau}{\delta}\bigg) + O(\epsilon) = x_\tau- \rho_0 \cos \theta_* + O(\epsilon),
%\end{align*}
%where the constant in the $O(\epsilon)$ is independent of $h, c_0>0$.  Therefore, by picking $\epsilon >0$ small enough, but independent of $h, c_0$, we find that on the event $\{ \tau \leq h \}$:
%\begin{align*} 
% |x_\tau| \geq \frac{\rho_0 |\cos \theta_*| }{\sqrt{2}}\geq \frac{c_0}{\sqrt{2}}.
% \end{align*}
% 
% On the other hand, suppose now that $|z_0| \geq \epsilon \delta/|\theta_{j\bl}|$ where the $\epsilon>0$ is as chosen above in the first case of part (i).  Because the argument is similar otherwise, we suppose without loss of generality that $\theta_{j\bl} z_0 \geq \epsilon \delta$.  

We suppose without loss of generality that $\theta_{\bj\bl} z_0 >0$ as the other case is similar.  Consider the event 
 \begin{align}
 \label{eqn:A_2}
B_4=B_4(\bj,\bl):= \bigcup_{k=1}^\infty \bigg\{ 2\pi k + \frac{\pi}{6} \leq \frac{\theta_{\bj\bl} z_0 \tau}{\delta} + \theta_* \leq \frac{\pi}{4} + 2\pi k \bigg\}.
 \end{align} 
On $B_4$, we have the desired bounds
 \begin{align*}
 |x_\tau| = \rho_0 \bigg| \cos \bigg(  \frac{\theta_{\bj\bl} z_0 \tau}{\delta} + \theta_*\bigg) \bigg| \geq \frac{\rho_0}{\sqrt{2}}\geq \frac{c_0}{2} \,\,\, \text{ and } \,\,\, |y_\tau| = \rho_0 \bigg| \sin \bigg(  \frac{\theta_{\bj\bl} z_0 \tau}{\delta} + \theta_*\bigg) \bigg|\geq \frac{\rho_0}{2}\geq \frac{c_0}{2}.    
 \end{align*}
 We have left to estimate $\PP(B_4)$.  To this end, note that if $w:= \tfrac{\delta}{\theta_{\bj\bl} z_0 h}$, we have that 
 \begin{align*}
 \PP(B_4) &= \sum_{k=1}^\infty \PP \bigg\{\frac{(2\pi k+\pi/6- \theta_*)\delta}{\theta_{\bj\bl} z_0} \leq \tau \leq \frac{(\pi/4 + 2\pi k -\theta_*)\delta}{\theta_{\bj\bl} z_0} \bigg\} \\
 &= \sum_{k=1}^\infty e^{-\frac{(2\pi k +\pi/6 - \theta_*)\delta}{\theta_{\bj\bl} z_0 h}} - e^{-\frac{(\pi/4+2\pi k - \theta_*)\delta}{\theta_{\bj\bl} z_0 h}} \\
&= e^{\theta_* w} \frac{e^{- 2\pi w}}{1- e^{-2\pi w}}  (e^{-\frac{\pi}{6}w}- e^{-\frac{\pi}{4}w}).\end{align*}
By our assumption, $\theta_{\bj\bl} z_0 \geq c_1 |\theta_{\bj\bl}| \delta$.  Hence, $0<w\leq 1/(c_1 |\theta_{\bj\bl}| h)$.  In particular, $\PP(B_4)>0$ for all such $w$.  On the other hand, as $w\rightarrow 0$   
\begin{align*}
e^{\theta_* w} \frac{e^{- 2\pi w}}{1- e^{-2\pi w}}  (e^{-\frac{\pi}{6}w}- e^{-\frac{\pi}{4}w}) \rightarrow \frac{1}{24}>0.
\end{align*}
In particular, 
\begin{align}
\inf_{0<w\leq \frac{1}{c_1 |\theta_{\bj\bl}| h}} \PP(B_4) >0,
\end{align}
which finishes the proof. 
\end{proof}

\subsection{Case 2: $(\bj, \bk, \bl) \in \mathscr{I}$ with $|\bk| >|\bj|$}

%\subsection{Energy transfer}
\label{sec:energyt}
To see when energy is transferred in~\eqref{eqn:trips3} in this case, first observe that equation~\eqref{eqn:trips3} has two conserved quantities; namely, the \emph{relative energy} and the \emph{relative enstrophy}, which are respectively given by
\begin{align}
\label{eqn:relen}E_{\bj \bk\bl}(x,y,z)&=\frac{x^2}{|\bj|^2}+ \frac{y^2}{|\bk|^2}+ \frac{z^2}{|\bl|^2},\\
\label{eqn:relens}\mathcal{E}(x,y,z) &=x^2+y^2+z^2.
\end{align}
For simplicity, we will often use the shorthand notations 
\begin{align}E=E_{\bj\bk\bl}(x,y,z)
\quad \text{ and } \quad \mathcal{E}= \mathcal{E}(x,y,z).
\end{align}  
Define a constant $\zeta_0(\bj, \bk)$ by 
\begin{align}
\label{eqn:zetadef}
\zeta_{0}(\bj,\bk):= \min\bigg\{  \bigg(\frac{1}{|\bj|^2}- \frac{1}{|\bk|^2} \bigg)\frac{1}{2d} 
,   \bigg(\frac{1}{|\bk|^2}- \frac{1}{|\bl|^2}\bigg) \frac{1}{2d}  \bigg\}>0
\end{align}
we will apply the following assumption on the initial data and the indices.     
\begin{assumption}
\label{assump:TNS}
We have that $(\bj, \bk, \bl) \in \mathscr{I}$ with $|\bk| > |\bj|$ and either \emph{(A1)} or \emph{(A2)} below holds:
\begin{itemize}
\item[(A1)] There exists constants $\xi \in (0,1)$ and $\zeta\in (0,  \zeta_0(\bj, \bk) d)$ independent of $\delta $ such that
\begin{align*}
\xi\leq \mathcal{E}\leq 1 \qquad \text{ and } \qquad \frac{\mathcal{E}}{|\bl|^2} + \zeta \leq E \leq \frac{\mathcal{E}}{|\bk|^2} -\zeta\delta^2.\end{align*}
\item[(A2)] There exists constants $\xi \in (0,1), \zeta\in (0,  \zeta_0(\bj, \bk)d)$ independent of $\delta $ such that  
\begin{align*}
\xi \leq \mathcal{E}\leq 1 \qquad \text{ and } \qquad \frac{\mathcal{E}}{|\bk|^2} + \zeta\delta^2 \leq E \leq \frac{\mathcal{E}}{|\bj|^2} -\zeta.\end{align*}
\end{itemize}
\end{assumption}

\begin{remark}
\label{rem:tripinitialdata}
Although it is suggestive, Assumption~\ref{assump:TNS} is \emph{not} equivalent to the condition that there exists a constant $c>0$ small, independent of $\delta\in (0,1)$ for which 
\begin{align}
\label{eqn:rely2}
c \leq y^2_0\leq \mathcal{E}- c \delta^2.  
\end{align} 
The above relation~\eqref{eqn:rely2} means that, initially, the middle mode $y$ in~\eqref{eqn:trips3} contains some, but not all of the relative enstrophy $\mathcal{E}$.  To see why they are not equivalent, note that the condition $E=\mathcal{E}/|\bk|^2$ is equivalent to the initial data $x_0$ and $z_0$ in~\eqref{eqn:trips3} satisfying
\begin{align}
\label{eqn:F=1/k^2}
x^2_0= z^2_0 \frac{\frac{1}{|\bk|^2}- \frac{1}{|\bl|^2} }{\frac{1}{|\bj|^2}- \frac{1}{|\bk|^2}}.  
\end{align}  
Note that if $y_0$ satisfies~\eqref{eqn:rely2}, then using the fact that $|\bl|^2 > |\bk|^2>|\bj|^2$ we can find $x_0, z_0$ satisfying~\eqref{eqn:F=1/k^2}.  When returning back to the randomly switched system, in order to deal with this family of \emph{bad} initial conditions, we will employ the assumption that $\bj\in \mathscr{F}$.  In particular, this will allow us to ensure, with positive probability, that we can initially perturb the dynamics away from the surface~\eqref{eqn:F=1/k^2} so that for some constant $\zeta'>0$ independent of $\delta$, the resulting coordinates $(x_0, y_0, z_0)$ satisfy either 
\begin{align*}
x^2_0 \bigg(\frac{1}{|\bj|^2}- \frac{1}{|\bk|^2} \bigg) \leq z^2_0\bigg( \frac{1}{|\bk|^2}- \frac{1}{|\bl|^2}\bigg) - \zeta' \delta^2, 
\end{align*}
which is equivalent to $E \leq \tfrac{\mathcal{E}}{|\bk|^2}-\zeta'\delta^2$, or 
     \begin{align*}
x^2_0 \bigg(\frac{1}{|\bj|^2}- \frac{1}{|\bk|^2} \bigg) \geq z^2_0\bigg( \frac{1}{|\bk|^2}- \frac{1}{|\bl|^2}\bigg) + \zeta' \delta^2, 
\end{align*}
which is equivalent to $E \geq \frac{\mathcal{E}}{|k|^2}+ \zeta'\delta^2.$  This will then imply Assumption~\ref{assump:TNS}. 
\end{remark}

\begin{figure} 
\includegraphics[width=8cm, height=6cm]{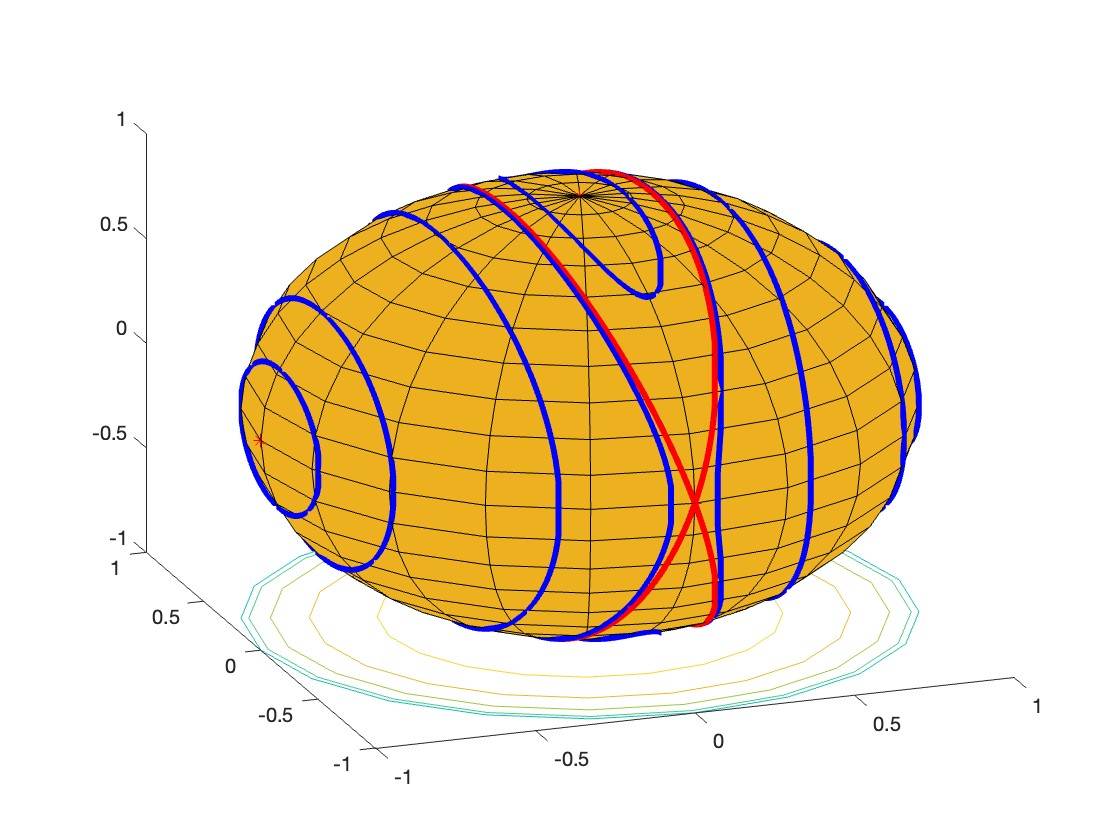}
\caption{Phase portrait of solutions of equation~\eqref{eqn:trips3} initialized on the unit sphere in $\R^3$.  When started on the interface $E= \mathcal{E}/|\bk|^2$ (plotted in red), solutions are not periodic and, in this case, approach one of the equilibria $(0, \pm1, 0)$.  Thus the fraction of time spent in the `thermalized' state where each of the modes $x$, $y$ and $z$ is non-zero, order 1 is of order $1/H$ as $H \rightarrow \infty$.  Away from the interface $E= \mathcal{E}/|\bk|^2$ (plotted in blue), solutions are periodic.  Provided solutions start sufficiently far from this interface (cf. Assumption~\ref{assump:TNS}), the fraction of time in the `thermalized' state is at least order $1/|\log H|$ as $H \rightarrow \infty$. } 
\end{figure}

\begin{figure}
\includegraphics[width=7cm, height=6cm]{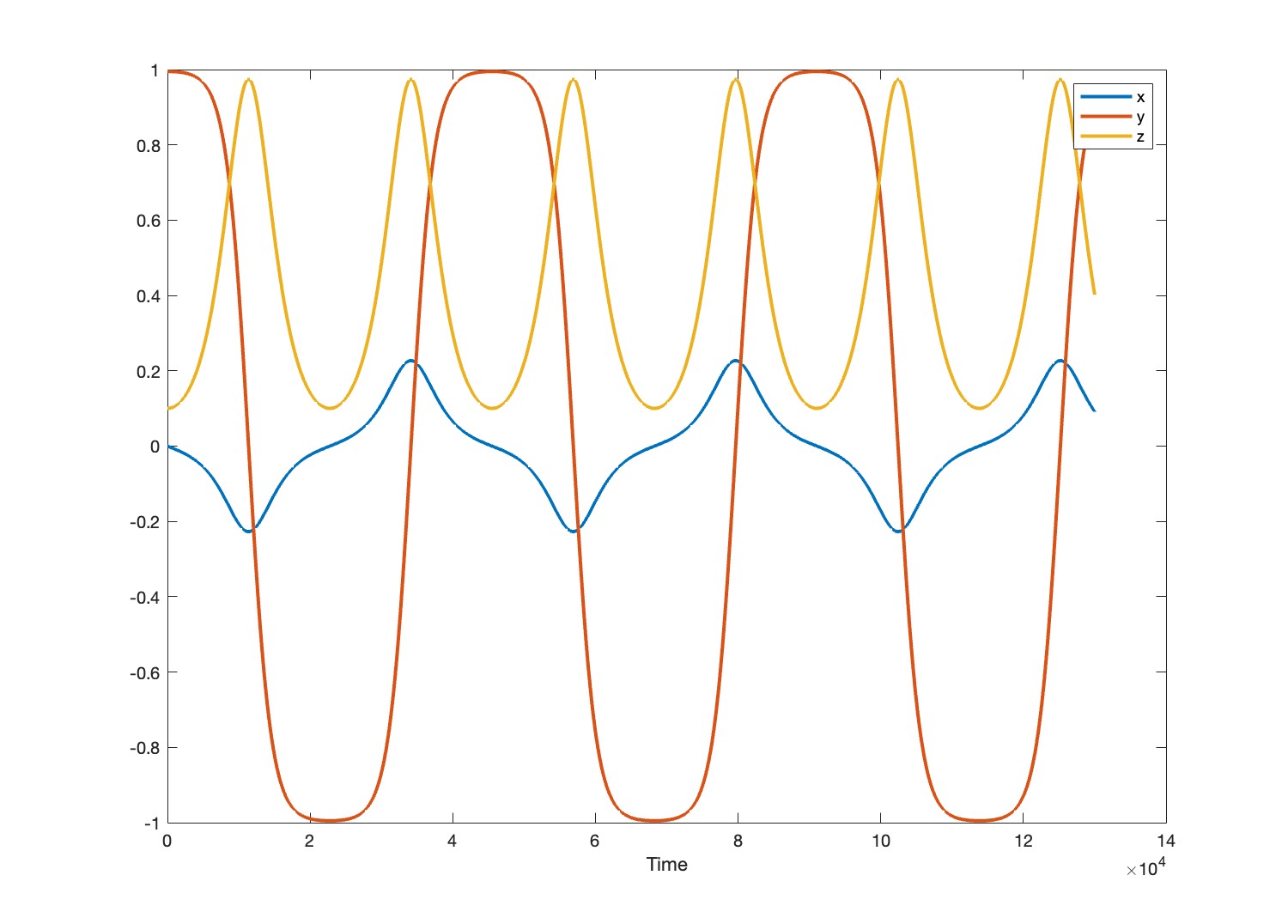}\includegraphics[width=7cm, height=6cm]{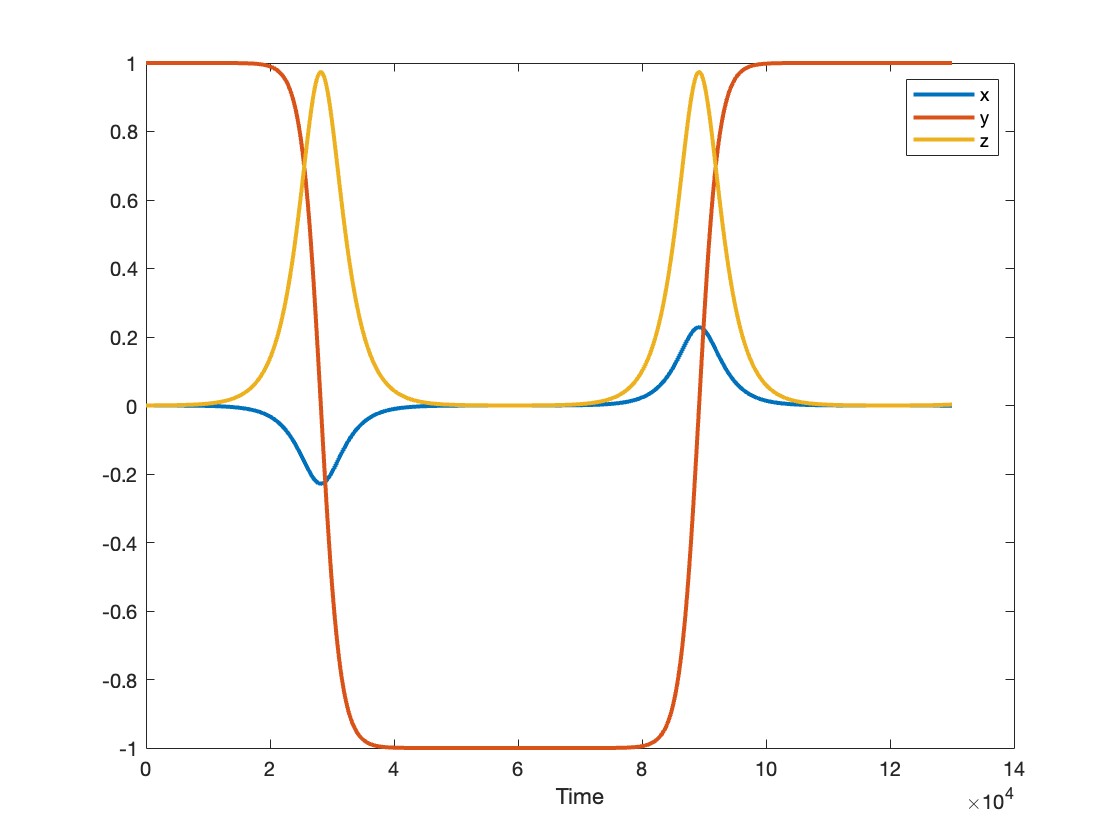}
\includegraphics[width=7cm, height=6cm]{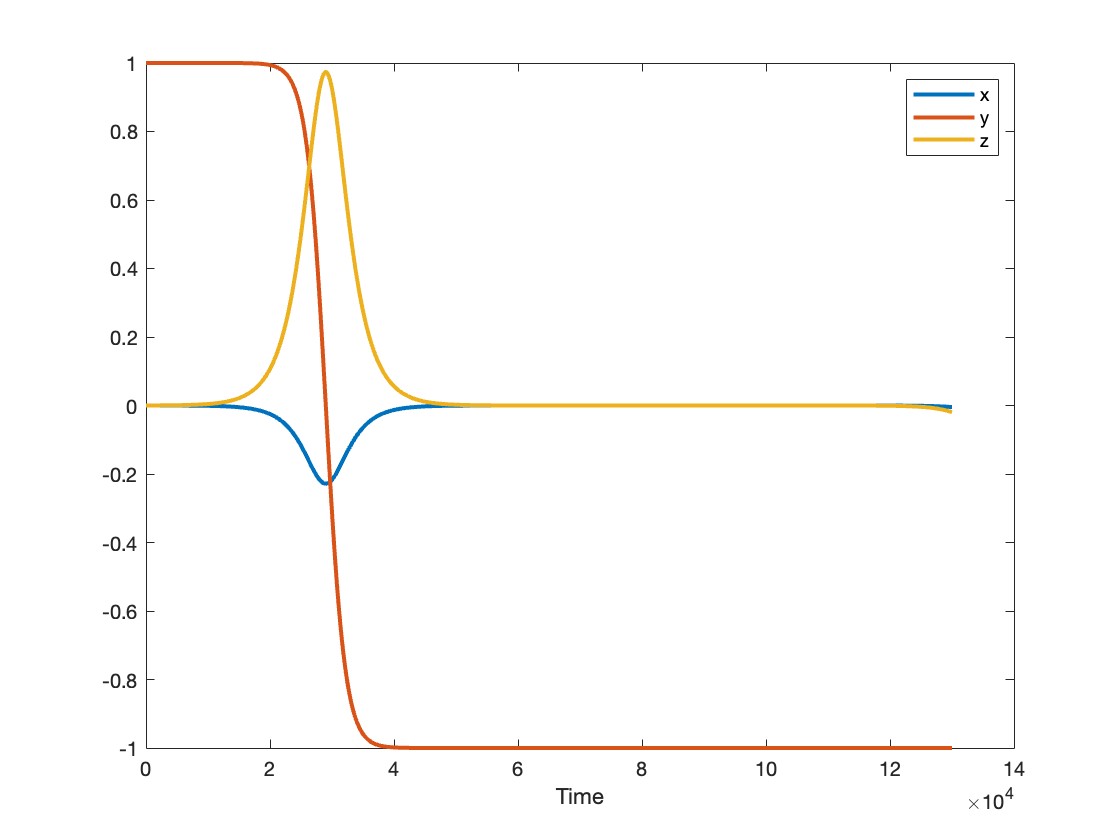}
\caption{Solutions of equation~\eqref{eqn:trips3} plotted over 140 units of time which are progressively closer to the interface $E=\mathcal{E}/|\bk|^2$ from left to right and then down.  As initial data tend to $E=\mathcal{E}/|\bk|^2$, the period of the solutions tends to infinity and the fraction of time spent in the `thermalized' state where $x$, $y$, and $z$ are all non-zero, order $1$ tends to zero.}
\end{figure}

In order to state our main result of this section, for $\eta \in (0,1/2)$ and $\delta \in (0,1)$, consider the following set \begin{align}
B_{\eta, \delta} = \{ s\geq 0 \, : \, \min\{ |x_s|, |y_s |, |z_s |  \} \geq \eta\} \end{align}
where $(x_s,y_s,z_s)$ denotes the solution of~\eqref{eqn:trips3}.  Of course, the solution $(x_s, y_s, z_s)$ depends on $\delta$, but we suppress this dependence for notational simplicity.   
\begin{theorem}[Thermalization]
\label{thm:therm}
Suppose that Assumption~\ref{assump:TNS} is satisfied.  Then there exist constants $\eta\in (0,1/4), c>0$ and $\delta_* \in (0,1)$ depending only on $|\bj|, |\bk|, |\bl|, \zeta, \xi, h$ so that 
 \begin{align}
 \label{eqn:timesel}
 \PP \{ \tau \in B_{\eta, \delta} \} \geq \frac{c}{|\log \delta|} \,\,\, \text{ for all } \delta \in (0, \delta_*). 
 \end{align} 
 \end{theorem}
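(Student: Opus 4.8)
The plan is to exploit the complete integrability of the spinning-top system \eqref{eqn:trips3}. Since the relative energy $E=E_{\bj\bk\bl}$ and relative enstrophy $\mathcal{E}$ of \eqref{eqn:relen}--\eqref{eqn:relens} are conserved, solving the two relations $E=\mathrm{const}$, $\mathcal{E}=\mathrm{const}$, which are affine in $x^2,y^2,z^2$, gives $x^2=A-By^2$ and $z^2=C-Dy^2$ with constants $A,B,C,D$ depending only on $E,\mathcal{E},|\bj|,|\bk|,|\bl|$ and with $B,D>0$. Under Assumption~\ref{assump:TNS}(A1) one checks $A>0$, $C>0$ and $A/B<C/D$, so the orbit never meets $\{z=0\}$, the coordinate $y$ oscillates between the turning points $\pm\sqrt{A/B}$ (where $x=0$), and substituting into $\dot y=(\theta_{\bj\bl}/\delta)xz$ yields $\dot y^2=(\theta_{\bj\bl}^2/\delta^2)(A-By^2)(C-Dy^2)$ with $\theta_{\bj\bl}\neq 0$. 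Because $E/\mathcal{E}$ is bounded away from $|\bj|^{-2}$ and $|\bl|^{-2}$ by $\zeta$ and from $|\bk|^{-2}$ by $\zeta\delta^2>0$, the level set $\{E=E_0,\ \mathcal{E}=\mathcal{E}_0\}$ contains no equilibrium of \eqref{eqn:trips3} (cf.\ \cite[Remark 3.5]{AMM_22}), so the orbit through the given initial datum is genuinely periodic. Under Assumption~\ref{assump:TNS}(A2) the roles of $x$ and $z$ are interchanged and the argument below applies verbatim after relabeling, so I treat only (A1).

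The first quantitative step is an upper bound on the period. In the original time variable one full period has length $P(\delta)=\frac{4\delta}{|\theta_{\bj\bl}|}\int_0^{\sqrt{A/B}}\frac{dy}{\sqrt{(A-By^2)(C-Dy^2)}}$. On the part $y^2\le\tfrac14(A/B)$ the integrand is bounded by a constant depending only on $|\bj|,|\bk|,|\bl|,\xi,\zeta$, since there $A-By^2\ge\tfrac34A$ and $C-Dy^2\ge\tfrac34C$ with $A$ and $C$ bounded below by such constants (this uses $\xi\le\mathcal{E}$ together with $E-\mathcal{E}/|\bl|^2\ge\zeta$). Near the turning point, substituting $y=\sqrt{A/B}-v$ reduces the integral to one of the form $\mathrm{const}\cdot\int_0^{c}\frac{dv}{\sqrt{v(v+a)}}=O(\log(1/a))$, where $a$ is comparable to $z^2_{\min}:=C-D(A/B)=(\mathcal{E}/|\bk|^2-E)/(|\bk|^{-2}-|\bl|^{-2})$. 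The decisive input is that Assumption~\ref{assump:TNS}(A1) forces $z^2_{\min}\ge \zeta\delta^2/(|\bk|^{-2}-|\bl|^{-2})$, i.e.\ $a\gtrsim\delta^2$, whence $P(\delta)\le C\delta|\log\delta|$ with $C=C(|\bj|,|\bk|,|\bl|,\xi,\zeta)$; in particular $P(\delta)\le\tfrac12$ once $\delta$ is below some $\delta_*=\delta_*(|\bj|,|\bk|,|\bl|,\xi,\zeta)$.

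The second step is a lower bound on the time spent thermalized in each period. From $E-\mathcal{E}/|\bl|^2\ge\zeta$ one gets $A=x^2|_{y=0}\ge c>0$ and $C=z^2|_{y=0}\ge c>0$, and from $\mathcal{E}/|\bk|^2-E\le\mathcal{E}(|\bk|^{-2}-|\bl|^{-2})-\zeta$ (which is again the lower bound on $E$ in (A1)) one gets that the maximum of $y^2$ along the orbit, namely $A/B$, is at least some $c_y>0$. Evaluating $x^2=A-By^2$, $y^2$, and $z^2=C-Dy^2$ at the point $P_*$ of the orbit where $y^2=\tfrac14c_y$ shows each of $x^2,y^2,z^2$ is at least an explicit constant $\eta_0^2$ there. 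Set $\eta:=\tfrac12\min(\eta_0,\tfrac14)$. Since the coordinates are $1$-Lipschitz, all three exceed $\eta$ throughout the ball of radius $\eta$ about $P_*$, and since along the orbit the $s$-time speed $|(\theta_{\bk\bl}yz,\theta_{\bj\bl}xz,\theta_{\bj\bk}xy)|$ is bounded above by $\mathcal{E}\le1$ times a constant, the orbit needs $s$-time at least some $c_1>0$, hence original time at least $c_1\delta$, to traverse that ball in each period. Therefore $B_{\eta,\delta}\cap[0,1]$ contains at least $\lfloor1/P(\delta)\rfloor\ge 1/(2P(\delta))$ disjoint such traversals, so $|B_{\eta,\delta}\cap[0,1]|\ge c_1\delta/(2C\delta|\log\delta|)=c_1/(2C|\log\delta|)$ for $\delta<\delta_*$.

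Finally, since $\tau\sim\exp(1/h)$ has density at least $h^{-1}e^{-1/h}$ on $[0,1]$,
\[
\PP\{\tau\in B_{\eta,\delta}\}\ \ge\ \PP\{\tau\in B_{\eta,\delta}\cap[0,1]\}\ \ge\ \frac{e^{-1/h}}{h}\,\bigl|B_{\eta,\delta}\cap[0,1]\bigr|\ \ge\ \frac{c}{|\log\delta|},
\]
with $c=c_1e^{-1/h}/(2Ch)$, which is \eqref{eqn:timesel}. The main obstacle is the period estimate of the second paragraph: one must extract the correct logarithmic blow-up of the quadrature uniformly in all parameters, and this rests entirely on tracking that the two turning points of the $y$-motion collide at rate $\asymp\delta^2$, which is exactly the content of the $\zeta\delta^2$ margins in Assumption~\ref{assump:TNS}. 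A secondary point requiring care is verifying that the ``good point'' $P_*$ genuinely lies on the orbit (i.e.\ that $\tfrac14c_y$ is in the range of $y^2$) and organizing cases (A1)/(A2) so that a single argument covers both by the $x\leftrightarrow z$ symmetry.
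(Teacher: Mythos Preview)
Your argument is correct and rests on the same two quantitative facts as the paper's proof: the period of the orbit is at most $C\delta|\log\delta|$ (this is exactly where the $\zeta\delta^2$ margin in Assumption~\ref{assump:TNS} enters), and each period contains at least $c_1\delta$ of thermalized time. The presentations differ, however. The paper writes the solution explicitly in terms of Jacobi elliptic functions $\mathrm{sn}_\rho,\mathrm{cn}_\rho,\mathrm{dn}_\rho$, isolates the period asymptotic as a separate lemma ($K_\rho=-\tfrac12\log\epsilon(\delta)+O(1)$ when $E=\mathcal{E}/|\bk|^2\pm\epsilon(\delta)$), and then evaluates $\PP\{\tau\in B_{\eta,\delta}\}$ directly by summing $e^{-\delta K_\rho(4j+1)/(Ch)}$ over periods as a geometric series. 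You instead bound the quadrature for the period without naming the elliptic functions, first obtain a Lebesgue-measure estimate $|B_{\eta,\delta}\cap[0,1]|\gtrsim 1/|\log\delta|$ by counting full periods in $[0,1]$, and then convert this into a probability bound via the uniform lower bound $h^{-1}e^{-1/h}$ on the density of $\tau$ on $[0,1]$. Your route is more elementary and avoids the special-function apparatus; the paper's has the advantage that the period asymptotic is cleanly separated and the probability computation is fully explicit, which makes the $h$-dependence of the constants transparent. One small point worth tightening in your write-up: when you say ``the $s$-time speed is bounded by $\mathcal{E}\le1$ times a constant,'' be explicit that $s=t/\delta$ so that \eqref{eqn:trips3} becomes $\delta$-free; this is what justifies the conversion ``$s$-time $c_1$ $\Rightarrow$ original time $c_1\delta$.''
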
   
 \begin{remark}
 Note that conclusion~\eqref{eqn:timesel} is \emph{thermalization} of the triple $(x_s, y_s, z_s)$ with probability $\geq c/|\log \delta|$.  
 
 %Under Assumption~\ref{assump:TNS} with the same $\eta$ and $\delta_*$ in the statement of the result above, one can also show that 
% \begin{align*}
% \liminf_{m\rightarrow \infty} \frac{1}{m}\int_0^m \mathbf{1}_{B_{\eta, \delta}}(s) \, ds \geq \frac{d}{|\log \delta|}, \quad \delta \in (0, \delta_*),
% \end{align*}
% for some constant $d>0$ depending only on $|\bj|, |\bk|, |\bl|, \zeta, \xi, h$.  That is, the fraction of time on the interval $[0, \infty)$ spent in $B_{\eta, \delta}$ is bounded below by $d/|\log \delta|$.  
% 
 \end{remark}

In order to prove Theorem~\ref{thm:therm}, we first introduce and deduce properties of the so-called \emph{Jacobi elliptic functions}.  As we will see below, these functions can be used to arrive at explicit expressions for solutions of equation~\eqref{eqn:trips3}.     

To begin, for $(\bj,\bk, \bl) \in \mathscr{I}$ with $|\bk| >|\bj|$, we first define the following parameters 
\begin{align}
\kappa_1 = \frac{E-\frac{\mathcal{E}}{|\bl|^2}}{\frac{1}{|\bj|^2}- \frac{1}{|\bl|^2}}, \qquad \gamma_1 = \frac{\frac{1}{|\bk|^2}- \frac{1}{|\bl|^2}}{E- \frac{\mathcal{E}}{|\bl|^2}}, \qquad \kappa_2 = \frac{\frac{\mathcal{E}}{|\bj|^2}-E}{\frac{1}{|\bj|^2}- \frac{1}{|\bl|^2}}, \qquad \gamma_2 = \frac{{\frac{1}{|\bj|^2} - \frac{1}{|\bk|^2}}}{\frac{\mathcal{E}}{|\bj|^2}- E}
\end{align}  
where $E,\mathcal{E}$ are the conserved quantities in~\eqref{eqn:relen}.  Observe that each of these parameters is strictly positive under Assumption~\ref{assump:TNS}.  Furthermore, these parameters arise from relations~\eqref{eqn:relen}-\eqref{eqn:relens} by solving for $x$ and $z$ in terms of $y$.  Indeed, we see that~\eqref{eqn:relen}-\eqref{eqn:relens} together imply
\begin{align}
\label{eqn:xzintermsy}
x^2= \kappa_1(1-\gamma_1 y^2) \qquad \text{ and } \qquad z^2 = \kappa_2(1-\gamma_2 y^2). 
\end{align}  
Next, define
\begin{align}
\gamma_{\text{min}} = \gamma_1 \wedge \gamma_2, \qquad \gamma_{\text{max}} = \gamma_1 \vee \gamma_2, \qquad \text{ and }\qquad \rho= \frac{\gamma_\text{min}}{\gamma_{\text{max}}}.  
\end{align}
One can check that 
\begin{align*}
\frac{\mathcal{E}}{|\bl|^2}<E< \frac{\mathcal{E}}{|\bk|^2} \,\,\, \text{ implies } \,\,\, \gamma_1 >\mathcal{E}^{-1} \text{ and } \gamma_2 <\mathcal{E}^{-1}, \text{ so } \rho= \frac{\gamma_2}{\gamma_1}<1.   
\end{align*}
Similarly, 
\begin{align*}
\frac{\mathcal{E}}{|\bk|^2}<E< \frac{\mathcal{E}}{|\bj|^2} \,\,\, \text{ implies } \,\,\, \gamma_2 >\mathcal{E}^{-1} \text{ and } \gamma_1 < \mathcal{E}^{-1}, \text{ so } \rho= \frac{\gamma_1}{\gamma_2}<1.
\end{align*}

\begin{remark}
To motivate the use of Jacobi elliptic functions with the parameters above, we observe that, formally, if $b:= y \sqrt{\gamma_\text{max}}$ where $y$ is as in~\eqref{eqn:trips3}, then 
\begin{align*}
\dot{b} = \frac{\sqrt{\gamma_\text{max} \kappa_1 \kappa_2} \theta_{\bj\bl}}{\delta} \sqrt{1- \rho b^2}\sqrt{1-b^2}. 
\end{align*}
Since $\rho\in (0,1)$, the above system is explicitly (locally) integrable but we need to be careful about the signs in the square roots.  These signs depend on  the initial data as well as the periodicity in the system~\eqref{eqn:trips3}.  A similar line of reasoning can also be used to arrive at formal explicitly integrable equations for $x$ and $z$ in~\eqref{eqn:trips3}.  
\end{remark} 

Following the previous remark, define a function $T_\rho$ on $[0,1]$ and a number $K_\rho$ by 
\begin{align}
T_\rho(s) = \int_0^s \frac{db}{\sqrt{1-\rho b^2}\sqrt{1-b^2}}   \qquad \text{ and } \qquad K_\rho :=T_\rho(1).
\end{align}
Observe that since $\rho<1$, $T_\rho$ is a well-defined, strictly increasing function on $[0,1]$ with range $[0, K_\rho]$.  Let $\text{sn}_\rho:[0, K_\rho]\rightarrow [0,1]$ denote the inverse of $T_\rho$.   We extend $\text{sn}_\rho$ to the larger interval $[0, 4K_r]$ by defining  
\begin{align}
&\text{sn}_\rho(x):=\text{sn}_\rho(2K_\rho-x), \qquad x\in [K_\rho, 2K_\rho],\\
\nonumber &\text{sn}_\rho(x):=- \text{sn}_\rho(4K_\rho-x) \qquad x\in [2K_\rho, 4K_\rho].
\end{align}
We extend $\text{sn}_\rho$ periodically to all of $\R$ with period $4K_\rho$.  Similarly, the function $\text{cn}_\rho$ is defined initially on the interval $[0, 4K_r]$ by
\begin{align}
\text{cn}_\rho(x)&:= \sqrt{1-\text{sn}_\rho^2(x)}, \qquad x\in [0, K_\rho],\\
\nonumber \text{cn}_\rho(x)&:= -  \text{cn}_\rho(2K_\rho-x), \quad x\in [K_\rho, 2K_\rho],\\
\nonumber \text{cn}_\rho(x)&:= \text{cn}_\rho(4K_\rho-x), \quad x\in [2K_\rho, 4K_\rho]
\end{align}
 and then extended to all of $\R$ to a function which is periodic with period $4K_\rho$.  The function $\text{dn}_\rho$ is defined by
 \begin{align}
 \text{dn}_\rho(x):= \sqrt{1-\rho \text{sn}_\rho(x)^2}, \qquad  x\in \R.  
 \end{align}
 Note that because $\rho<1$, $\text{dn}_\rho$ remains positive for all values of $x$ whereas $\text{sn}_\rho$ and $\text{cn}_\rho$ oscillate between $1$ and $-1$, similar to the usual sine and cosine. The function $\text{sn}_\rho$, 
 $\text{cn}_\rho$, and $\text{dn}_\rho$ are called \emph{Jacobi elliptic functions}.    One can check that (see~\cite{AS_64}) the functions $\text{sn}_\rho$, $\text{cn}_\rho$ and $\text{dn}_\rho$ are differentiable and satisfy the following differential identities  
 \begin{align}
 \text{sn}_\rho'(x)= \text{cn}_\rho(x) \text{dn}_\rho(x), \quad \text{cn}_\rho'(x)= - \text{sn}_\rho(x) \text{dn}_\rho(x), \quad \text{dn}_\rho'(x) =-\rho\text{sn}_\rho(x) \text{cn}_\rho(x).  
 \end{align}
 Using the above facts, we can now deduce explicit expressions involving the Jacobi elliptic functions for solutions of~\eqref{eqn:trips3}.  The explicit expressions vary slightly depending on the value of the relative energy $\mathcal{E}$ and the initial data, as we now note.

\subsubsection*{Assumption \emph{(A1)}}  Suppose that Assumption~\ref{assump:TNS} (A1) is satisfied.  First observe that $\mathcal{E}/|\bl|^2 <E< \mathcal{E}/|\bk|^2$ implies that the initial condition $z_0$ in \eqref{eqn:trips3} is nonzero.  Indeed, if $z_0=0$, then using $|\bl| > |\bk|> |\bj|$ gives 
 \begin{align*}
 E= \frac{x^2_0}{|\bj|^2}+ \frac{y^2_0}{|\bk|^2} \geq \frac{\mathcal{E}}{|\bk|^2}.  
 \end{align*}
 Now, if $z_0 >0$, then we can express $x$, $y$ and $z$ as
\begin{align*}
x_t= \sqrt{\kappa_1} \text{cn}_\rho \bigg[& \frac{\sqrt{\gamma_1 \kappa_1 \kappa_2} \theta_{\bj\bl}}{\delta} t + \theta_0\bigg], \quad y_t= \frac{1}{\sqrt{\gamma_1}} \text{sn}_\rho\bigg[ \frac{\sqrt{\gamma_1 \kappa_1 \kappa_2} \theta_{\bj\bl}}{\delta} t + \theta_0\bigg],\\
&z_t= \sqrt{\kappa_2} \text{dn}_\rho  \bigg[ \frac{\sqrt{\gamma_1 \kappa_1 \kappa_2} \theta_{\bj\bl}}{\delta}t + \theta_0\bigg],
\end{align*}  
where $\theta_0 \in [0, 4K_\rho)$ is the unique point so that 
\begin{align}
x_0= \sqrt{\kappa_1}\text{cn}_\rho(\theta_0)\qquad  \text{ and }\qquad  y_0=\frac{1}{\sqrt{\gamma_1}} \text{sn}_\rho(\theta_0) .  
\end{align} 

On the other hand, if $z_0<0$, then the solution of~\eqref{eqn:trips3} can be written as   
\begin{align*}
x_t= \sqrt{\kappa_1} \text{cn}_\rho \bigg[& \frac{\sqrt{\gamma_1 \kappa_1 \kappa_2} \theta_{\bj\bl}}{\delta} t + \theta_0\bigg], \quad  y_t= -\frac{1}{\sqrt{\gamma_1}} \text{sn}_\rho\bigg[\frac{\sqrt{\gamma_1 \kappa_1 \kappa_2} \theta_{\bj\bl}}{\delta}t + \theta_0\bigg], \\
&z_t= -\sqrt{\kappa_2} \text{dn}_\rho  \bigg[ \frac{\sqrt{\gamma_1 \kappa_1 \kappa_2} \theta_{\bj\bl}}{\delta}t + \theta_0\bigg],
\end{align*}  
where $\theta_0 \in [0, 4K_\rho)$ is the unique point so that 
\begin{align}
x_0= \sqrt{\kappa_1}\text{cn}_\rho(\theta_0)\qquad  \text{ and }\qquad  y_0=-\frac{1}{\sqrt{\gamma_1}} \text{sn}_\rho(\theta_0) .  
\end{align}
Additionally, it is not hard to check that the conditions $\mathcal{E}\in [\xi, 1]$ and $\mathcal{E}/|\bl|^2 +\zeta \leq E \leq \mathcal{E}/|\bk|^2 - \zeta\delta^2$ imply that the parameters $\kappa_1, \gamma_1, \kappa_2, \gamma_2$ are bounded above and below by positive constants that do not depend on $\delta$.  Indeed, we have the following bounds   
\begin{align}
\label{eqn:o1par}\frac{\zeta}{\frac{1}{|\bj|^2} - \frac{1}{|\bl|^2}}&\leq \kappa_1 \leq \frac{\frac{1}{|\bk|^2}- \frac{1}{|\bl|^2}}{\frac{1}{|\bj|^2} - \frac{1}{|\bl|^2}}, \qquad  \,\,\,1\leq \gamma_1 \leq \frac{\frac{1}{|\bk|^2}- \frac{1}{|\bl|^2}}{\zeta},\\
\nonumber \frac{\frac{\xi}{|\bj|^2} - \frac{\xi}{|\bk|^2} }{\frac{1}{|\bj|^2}- \frac{1}{|\bl|^2}}&\leq \kappa_2 \leq \frac{\frac{1}{|\bj|^2} - \frac{1}{|\bl|^2} - \zeta}{\frac{1}{|\bj|^2} - \frac{1}{|\bl|^2}}, \qquad \frac{\frac{1}{|\bj|^2} - \frac{1}{|\bk|^2}}{\frac{1}{|\bj|^2} - \frac{1}{|\bl|^2} -\zeta} \leq \gamma_2 \leq 1/\xi.
\end{align}

\subsubsection*{Assumption \emph{(A2)}}  Suppose now that  Assumption~\ref{assump:TNS} (A2) is satisfied.  Then a similar argument to the one used above implies that $x_0\neq 0$.  If $x_0>0$, then the unique solution of~\eqref{eqn:trips3} is given by 
\begin{align*}
x_t=\sqrt{\kappa}_1 \text{dn}_\rho\bigg[&\frac{ \sqrt{\gamma_2 \kappa_1 \kappa_2} \theta_{\bj\bl}}{\delta} t + \theta_0\bigg],\,\,\,\,y_t= \frac{1}{\sqrt{\gamma_2}}\text{sn}_\rho \bigg[\frac{\sqrt{\gamma_2 \kappa_1 \kappa_2} \theta_{\bj\bl}}{\delta} t + \theta_0\bigg], \\
 &z_t= \sqrt{\kappa_2}\text{cn}_\rho \bigg[\frac{\sqrt{\gamma_2 \kappa_1 \kappa_2} \theta_{\bj\bl}}{\delta} t + \theta_0  \bigg],
\end{align*}
where $\theta_0\in [0, 4K_\rho)$ is defined by 
\begin{align}
y_0= \frac{1}{\sqrt{\gamma_2}} \text{sn}_\rho(\theta_0) \qquad \text{ and } \qquad z_0= \sqrt{\kappa_2} \text{cn}_\rho(\theta_0).  
\end{align}
On the other hand if $x_0<0$, then the unique solution of~\eqref{eqn:trips3} is given by 
\begin{align*}
x_t=-\sqrt{\kappa}_1 \text{dn}_\rho\bigg[&\frac{ \sqrt{\gamma_2 \kappa_1 \kappa_2} \theta_{\bj\bl}}{\delta}  t + \theta_0\bigg],\,\,\,\, y_t=- \frac{1}{\sqrt{\gamma_2}}\text{sn}_\rho\bigg[\frac{ \sqrt{\gamma_2 \kappa_1 \kappa_2} \theta_{\bj\bl}}{\delta} t + \theta_0\bigg], \\
&z_t= \sqrt{\kappa_2}\text{cn}_\rho\bigg[\frac{ \sqrt{\gamma_2 \kappa_1 \kappa_2} \theta_{\bj\bl}}{\delta} t + \theta_0\bigg],
\end{align*}
where $\theta_0\in [0, 4K_\rho)$ is defined by 
\begin{align}
y_0= -\frac{1}{\sqrt{\gamma_2}} \text{sn}_\rho(\theta_0) \qquad \text{ and } \qquad z_0= \sqrt{\kappa_2} \text{cn}_\rho(\theta_0).  
\end{align}

In this case, we also have bounds for the positive parameters $\kappa_1, \gamma_1, \kappa_2, \gamma_2$ analogous to those in~\eqref{eqn:o1par}:
\begin{align}
\label{eqn:o1par2}
\frac{\frac{\xi}{|\bk|^2}- \frac{\xi}{|\bl|^2}}{\frac{1}{|\bj|^2}- \frac{1}{|\bl|^2}} &\leq \kappa_1 \leq \frac{\frac{1}{|\bj|^2} - \frac{1}{|\bl|^2} -\zeta}{\frac{1}{|\bj|^2}- \frac{1}{|\bl|^2}}, \qquad \frac{\frac{1}{|\bk|^2}- \frac{1}{|\bl|^2}}{\frac{1}{|\bj|^2}- \frac{1}{|\bl|^2} -\zeta }\leq \gamma_1 \leq \frac{1}{\xi} ,\\
\nonumber \frac{\zeta}{\frac{1}{|\bj|^2}- \frac{1}{|\bl|^2}}& \leq \kappa_2 \leq \frac{\frac{1}{|\bj|^2}- \frac{1}{|\bk|^2}}{\frac{1}{|\bj|^2}- \frac{1}{|\bl|^2}}, \qquad  1\leq \gamma_2 \leq \frac{\frac{1}{|\bj|^2}- \frac{1}{|\bk|^2}}{\zeta}.  
\end{align}

Given the expressions above for solutions of~\eqref{eqn:trips3} as well as the bounds~\eqref{eqn:o1par} and \eqref{eqn:o1par2}, we are almost ready to prove Theorem~\ref{thm:therm}.  The last ingredient needed is a lower asymptotic bound on the quantity $K_\rho$ in certain regimes in Assumption~\ref{assump:TNS}.  We recall that $K_\rho$ is $1/4$ of the period of the Jacobi elliptic functions $\text{sn}_\rho$ and $\text{cn}_\rho$.  We do this in the following lemma.    
\begin{lemma}
\label{lem:asyp}
Suppose that Assumption~\ref{assump:TNS} is satisfied and, furthermore, 
\begin{align}
E=\frac{\mathcal{E}}{|\bk|^2}+\epsilon(\delta) \qquad \text{ or } \qquad E= \frac{\mathcal{E}}{|\bk|^2}- \epsilon(\delta)
\end{align}
where $\epsilon (\delta) >0$ and $\epsilon(\delta) \rightarrow 0$ as $\delta \rightarrow 0$.  Then as $\delta \rightarrow 0$
\begin{align}
K_\rho = - \frac{\log \epsilon(\delta)}{2} + O(1). 
\end{align}
\end{lemma}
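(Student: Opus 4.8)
The plan is to decouple the statement into two independent asymptotic facts and then combine them: (i) the behaviour of $K_\rho$ as $\rho \to 1^-$, and (ii) the rate at which $\rho \to 1^-$ as $\delta \to 0$ under the extra hypothesis $E = \mathcal{E}/|\bk|^2 \pm \epsilon(\delta)$. Once these are in hand, $K_\rho = -\tfrac12\log(1-\rho) + O(1)$ together with $\log(1-\rho) = \log \epsilon(\delta) + O(1)$ gives the claim immediately.

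For (i), recall $K_\rho = T_\rho(1) = \int_0^1 \frac{db}{\sqrt{1-\rho b^2}\sqrt{1-b^2}}$, which is (up to $b = \sin\theta$) the complete elliptic integral of the first kind with modulus $\sqrt\rho$; the classical asymptotic as $\rho \to 1^-$ (see \cite{AS_64}) is exactly $K_\rho = -\tfrac12\log(1-\rho) + O(1)$. To keep the argument self-contained I would reprove the two-sided bound directly. On $[0,\tfrac12]$ one has $1-\rho b^2 \geq 1-b^2$ for $\rho \leq 1$, so the integrand is at most $(1-b^2)^{-1}\leq \tfrac43$ and $\int_0^{1/2}$ contributes $O(1)$ uniformly in $\rho$. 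On $[\tfrac12,1]$, substitute $u = 1-b$ and use $1-b^2 \in [u,2u]$ and $1-\rho b^2 = (1-\rho)+\rho(1-b^2) \in [(1-\rho)+\rho u,\,(1-\rho)+2u]$ to sandwich $\int_{1/2}^1$ between fixed positive multiples of $\int_0^{1/2} \frac{du}{\sqrt{u}\sqrt{(1-\rho)+u}}$; since $\int \frac{du}{\sqrt{u(u+a)}} = 2\log(\sqrt u + \sqrt{u+a})$, this equals $2\log\!\big(\sqrt{1/2}+\sqrt{1/2+(1-\rho)}\big) - \log(1-\rho) = -\log(1-\rho) + O(1)$. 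Hence $K_\rho + \tfrac12\log(1-\rho)$ stays bounded as $\rho \to 1^-$.

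For (ii), I would work with the explicit formulas $\gamma_1 = \big(\tfrac1{|\bk|^2}-\tfrac1{|\bl|^2}\big)\big/\big(E - \tfrac{\mathcal{E}}{|\bl|^2}\big)$, $\gamma_2 = \big(\tfrac1{|\bj|^2}-\tfrac1{|\bk|^2}\big)\big/\big(\tfrac{\mathcal{E}}{|\bj|^2} - E\big)$, and set $c_1 = \tfrac1{|\bk|^2}-\tfrac1{|\bl|^2}>0$, $c_2 = \tfrac1{|\bj|^2}-\tfrac1{|\bk|^2}>0$. At $E = \mathcal{E}/|\bk|^2$ both denominators become $\mathcal{E}c_1$ and $\mathcal{E}c_2$, so $\gamma_1 = \gamma_2 = 1/\mathcal{E}$ there. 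Plugging $E = \mathcal{E}/|\bk|^2 \pm \epsilon(\delta)$ and expanding, $\gamma_1 = \tfrac1{\mathcal{E}}\big(1 \mp \tfrac{\epsilon(\delta)}{\mathcal{E}c_1} + O(\epsilon(\delta)^2)\big)$ and $\gamma_2 = \tfrac1{\mathcal{E}}\big(1 \pm \tfrac{\epsilon(\delta)}{\mathcal{E}c_2} + O(\epsilon(\delta)^2)\big)$; in each sign case one of $\gamma_1,\gamma_2$ lies below $1/\mathcal{E}$ and the other above, matching the sign discussion preceding the lemma, and is thus $\gamma_{\min}$ resp. $\gamma_{\max}$. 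Therefore
\begin{align*}
1-\rho = \frac{\gamma_{\max}-\gamma_{\min}}{\gamma_{\max}} = \frac{\epsilon(\delta)}{\mathcal{E}}\Big(\tfrac1{c_1}+\tfrac1{c_2}\Big)\big(1+O(\epsilon(\delta))\big).
\end{align*}
Since $\mathcal{E}\in[\xi,1]$ and $c_1,c_2$ depend only on $|\bj|,|\bk|,|\bl|$, the prefactor is pinched between positive constants depending only on $|\bj|,|\bk|,|\bl|,\xi$, and $\epsilon(\delta)\to0$ kills the $O(\epsilon(\delta))$ correction, so $\log(1-\rho) = \log\epsilon(\delta) + O(1)$. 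Combining with (i) yields $K_\rho = -\tfrac12\log(1-\rho) + O(1) = -\tfrac{\log\epsilon(\delta)}{2} + O(1)$.

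The step I expect to demand the most care is the uniformity in (ii): $\mathcal{E}$ (and hence $E$) is a conserved quantity depending on the $\delta$-dependent initial data, so the leading constant and the error in the Taylor expansion of $\gamma_1,\gamma_2$ in $\epsilon(\delta)$ must be controlled uniformly over $\mathcal{E}\in[\xi,1]$ and over the admissible $E$; the bounds already recorded in \eqref{eqn:o1par}--\eqref{eqn:o1par2}, which pinch $\gamma_1,\gamma_2,\kappa_1,\kappa_2$ between positive $\delta$-independent constants, are precisely what make this bookkeeping routine rather than delicate. A secondary point is simply to run the two sign cases $E = \mathcal{E}/|\bk|^2 \pm \epsilon(\delta)$ in parallel and to note in each that $\rho<1$ (established before the lemma), so that $K_\rho$ and the Jacobi functions are well-defined throughout.
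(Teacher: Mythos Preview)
Your proof is correct and follows essentially the same approach as the paper: both expand $\gamma_1,\gamma_2$ about $E=\mathcal{E}/|\bk|^2$ to obtain $1-\rho = c\,\epsilon(\delta)(1+O(\epsilon(\delta)))$ with $c$ bounded above and below by positive constants depending only on $|\bj|,|\bk|,|\bl|,\xi$, and then use the classical asymptotic $K_\rho = -\tfrac12\log(1-\rho)+O(1)$ for the complete elliptic integral. The only difference is that the paper cites a reference for this last asymptotic, whereas you supply a self-contained two-sided estimate via the $[0,\tfrac12]$--$[\tfrac12,1]$ split.
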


\begin{remark}
If Assumption~\ref{assump:TNS} is satisfied and $E$ is uniformly bounded away from $\mathcal{E}/|\bk|^2$ as $\delta \rightarrow 0$, then the parameter $\rho=\gamma_\text{min}/\gamma_\text{max}<1$ is then uniformly bounded away from $1$ as $\delta \rightarrow 0$.  Hence, $K_\rho$ is order one, positive as $\delta \rightarrow 0$.  Thus the analysis in the lemma above focuses on the case when $E\rightarrow \mathcal{E}/|\bk|^2$ as $\delta \rightarrow 0$.   
\end{remark}

\begin{proof}[Proof of Lemma~\ref{lem:asyp}]
Let $E= \mathcal{E}/|\bk|^2 \pm \epsilon(\delta)$ where $\epsilon(\delta) >0$ satisfies $\epsilon(\delta) \rightarrow 0$ as $\delta \rightarrow 0$.  Without loss of generality, we study the case when $E= \mathcal{E}/|k|^2 - \epsilon(\delta)$ as the other case is nearly identical.  Observe that as $\delta \rightarrow 0$
\begin{align*}
\rho= \frac{\gamma_\text{min}}{\gamma_\text{max}}=\frac{ \frac{1}{1+ \frac{\epsilon(\delta)}{\frac{\mathcal{E}}{|\bj|^2}- \frac{\mathcal{E}}{|\bk|^2}}}}{\frac{1}{1- \frac{\epsilon(\delta)}{\frac{\mathcal{E}}{|\bk|^2}- \frac{\mathcal{E}}{|\bl|^2}}}} = 1 - \bigg(\frac{1}{\mathcal{E}(\frac{1}{|\bj|^2} -\frac{1}{|\bk|^2})} + \frac{1}{\mathcal{E}(\frac{1}{|\bk|^2} -\frac{1}{|\bl|^2})} \bigg)  \epsilon(\delta) + O(\epsilon^2(\delta)).
\end{align*}
Using the asymptotics in~\cite{Ki_15}, we thus obtain
\begin{align*}
K_\rho = \int_0^1 \frac{db}{\sqrt{1-\rho b^2}\sqrt{1-b^2}}\, db = - \frac{\log \epsilon(\delta)}{2} +O(1)
\end{align*}
as $\delta \rightarrow 0$.  
\end{proof}

Given the previous result, we now turn to the proof of Theorem~\ref{thm:therm}.  

\begin{proof}[Proof of Theorem~\ref{thm:therm}]
Set $C= \theta_{\bj\bl} \sqrt{\gamma_\text{max} \kappa_1 \kappa_2}$ and assume without loss of generality that $C>0$.  Observe that for $\eta>0$, $\delta_*>0$ small enough depending only on $|\bj|, |\bk|, |\bl|, \zeta, \xi$, there exists $\eta'>0$ with $\eta'<1-\eta'$ depending only on $|\bj|, |\bk|, |\bl|, \zeta, \xi$ such that $\delta \in (0, \delta_*)$ implies
\begin{align*}
\nonumber \P\{ \tau \in B_{\eta, \delta} \} = \int_{B_{\eta, \delta}} \frac{1}{h} e^{-t/h} \, dt & \geq \int_0^\infty \mathbf{1}\{ \eta' \leq | \text{sn}_\rho(C t/\delta + \theta_0) | \leq 1- \eta' \} \frac{1}{h} e^{-t/h} \, dt \\
\nonumber &= \frac{\delta}{C}e^{\theta_0 \frac{\delta}{Ch}} \int_{\theta_0}^\infty \mathbf{1}\{ \eta' \leq | \text{sn}_\rho(t)| \leq 1- \eta' \} \frac{1}{h} e^{-\frac{\delta t}{C h}} \, dt\\
& \geq \frac{\delta}{C}e^{\theta_0 \frac{\delta}{Ch}} \sum_{j=1}^\infty \int_{4j K_\rho}^{(4j+1)K_\rho} \mathbf{1}\{ \eta' \leq  \text{sn}_\rho(t) \leq 1- \eta' \} \frac{1}{h} e^{-\frac{\delta t}{C h}} \, dt\\
& \geq  \frac{\delta}{C}e^{\theta_0 \frac{\delta}{Ch}}  \sum_{j=1}^\infty e^{-\frac{\delta K_\rho}{Ch}(4j+1)}\int_{4j K_\rho}^{(4j+1)K_\rho} \mathbf{1}\{ \eta' \leq  \text{sn}_\rho(t) \leq 1- \eta' \}  dt\\
&= \frac{\delta}{C}\exp\Big( \frac{\theta_0\delta}{Ch}\Big) (T_\rho(1-\eta')- T_\rho (\eta'))  \sum_{j=1}^\infty e^{-\frac{\delta K_\rho}{Ch}(4j+1)} .
\end{align*}
Next, we note that 
\begin{align*}
 &\frac{\delta}{C}\exp\Big( \frac{\theta_0\delta}{Ch}\Big) (T_\rho(1-\eta')- T_\rho(\eta'))  \sum_{j=1}^\infty e^{-\frac{\delta K_\rho}{Ch}(4j+1)}\\
 \qquad &= \frac{\frac{\delta}{C}}{1- e^{-4\delta K_\rho/Ch}}\exp \Big(\frac{\theta_0\delta}{Ch} - \frac{5\delta K_\rho}{Ch}\Big)[T_\rho(1-\eta') - T_\rho(\eta')].
 \end{align*}
 Using Lemma~\ref{lem:asyp} and the definition of $T_\rho(s)$ we see that there exists $c_{1}=c_1(h)>0$ independent of $\delta \in (0, \delta_*)$ for which 
 \begin{align}
 \label{eqn:tausel1}
 \exp \Big(\frac{\theta_0\delta}{Ch} - \frac{t\delta K_\rho}{Ch}\Big)[T_\rho(1-\eta') - T_\rho(\eta')]\geq c_{1}.  
 \end{align}
 Also, using Lemma~\ref{lem:asyp}, we find that as $\delta \rightarrow 0$, 
 \begin{align*}
 1- e^{-4\delta K_\rho/Ch} = \frac{4\delta K_\rho}{Ch} + O(\delta^2 K_\rho^2).  
 \end{align*}
 Hence, there exists $\delta_* >0$ small enough depending only on $|\bj|, |\bk|, |\bl|, \zeta, \xi, h$ and a constant $c_{2}=c_2(h)>0$ independent of $\delta \in (0, \delta_*)$ for which 
 \begin{align}
 \label{eqn:tausel2}
 \frac{\frac{\delta}{C}}{1- e^{-4\delta K_\rho/Ch}} \geq \frac{c_{2}}{|\log \delta|}  \,\,\, \text{ for all } \delta \in (0, \delta_*).  
  \end{align}
  Setting $c = c_{1} c_{2}$ and combining~\eqref{eqn:tausel1} with~\eqref{eqn:tausel2} completes the proof.  
\end{proof}

\subsection{Proof of Theorem~\ref{thm:NSElb}}
\label{sec:proofpropNSE}
Given the analysis of the transfer of energy in the triples~\eqref{eqn:splittingE} of the previous section, we are now ready to prove the main result of this section, Theorem~\ref{thm:NSElb}.  Due to its length, the proof will be split into two parts depending on what assumption on the forced and damped modes, either (DF1) or (DF2), is satisfied.     

\begin{proof}[Proof of Theorem~\ref{thm:NSElb} under Assumption~\ref{assump:SNS} (DF1)]
Suppose that Assumption~\ref{assump:SNS} (DF1) is satisfied and let $R>3$ and $q=(a,b)\in H_{>R+1}$ so that $|q| >R$.  Then there exists $\bk \in \ZZ^2$ such that either  
\begin{align*}
|a_\bk| \geq \frac{|q|}{\sqrt{d}} \,\, \,\,\text{ or } \,\,\,\, |b_\bk| \geq \frac{|q|}{\sqrt{d}}.  
\end{align*}  
Without loss of generality, we suppose that $|a_\bk| \geq |q|/\sqrt{d}$ and let $S=S_{|\mathscr{S}|}$ for simplicity.

If $\bk \in \mathscr{D}$, then $q\in D_{\frac{1}{d}}$ and hence
\begin{align*}
\PP_q(A_0^{\text{damp}}(D_{1/d})) \geq \PP( u_{10}= \text{damp}) = \frac{1}{|S|}>0. 
\end{align*}  
On the other hand, if $\bk \notin \mathscr{D}$ then $\bk \notin \{ (1,0), (0,1), (N,N) \}$ and hence $|\bk |> 1$.  In this case, either $(1,0)+\bk \in \ZZ^2$ or $(0,1)+ \bk \in \ZZ^2$.  Let $\bj\in \{ (1,0), (0,1) \}$ be such that $\bl=\bj + \bk \in \ZZ^2$.  

For notational simplicity in what follows, let $H_0=H(q)$, $\delta_0 = 1/H_0$, $x_0=a_\bj/H_0$, $y_0=a_\bk/H_0$ and $z_0= a_\bl/H_0$.  Observe that 
\begin{align}
\label{eqn:initialenergyb}
\xi_0:= \frac{1}{2d}\leq \mathcal{E}(x_0, y_0,z_0) \leq 1.
\end{align}
Furthermore, 
\begin{align}
\label{eqn:propSNS1}
E_{\bj\bk\bl}(x_0, y_0, z_0)&\geq \frac{\mathcal{E}(x_0, y_0, z_0)}{|\bl|^2}+ \bigg(\frac{1}{|\bk|^2}- \frac{1}{|\bl|^2}\bigg) y^2_0 \\
\nonumber &\geq \frac{\mathcal{E}(x_0, y_0, z_0)}{|\bl|^2}+  \bigg(\frac{1}{|\bk|^2}- \frac{1}{|\bl|^2}\bigg) \xi_0  
\end{align}
and 
\begin{align}
\label{eqn:propSNS2}
E_{\bj\bk\bl}(x_0, y_0, z_0)&\leq \frac{\mathcal{E}(x_0, y_0, z_0)}{|\bj|^2} - \bigg(\frac{1}{|\bj|^2}- \frac{1}{|\bk|^2} \bigg)y^2_0\\
\nonumber & \leq \frac{\mathcal{E}(x_0, y_0, z_0)}{|\bj|^2} - \bigg(\frac{1}{|\bj|^2}- \frac{1}{|\bk|^2} \bigg)\xi_0.  
\end{align}
In particular, if $\zeta_0=\zeta_0(\bj, \bk)$ as in~\eqref{eqn:zetadef} we have that 
\begin{align}
\label{eqn:F_0int}
\frac{\mathcal{E}(x_0, y_0, z_0)}{|\bl|^2}+ \zeta_0 \leq E_{\bj\bk\bl}(x_0, y_0, z_0) \leq \frac{\mathcal{E}(x_0, y_0, z_0)}{|\bj|^2} - \zeta_0.
\end{align}

\emph{Case 1}: The quantity $E_{\bj\bk\bl}(x_0, y_0, z_0)$ does \emph{not} belong to the interval 
\begin{align}
\bigg[\frac{\mathcal{E}(x_0, y_0, z_0)}{|\bk|^2}- \zeta_0 \delta_0, \frac{\mathcal{E}(x_0, y_0, z_0)}{|\bk|^2} + \zeta_0 \delta_0\bigg]. 
\end{align}
In particular, in this case Assumption~\ref{assump:TNS} is satisfied, where in the statements $(x_0, y_0, z_0)$ replaces the triple $(x,y,z)$ and $\delta_0$ replaces $\delta$.  Applying Theorem~\ref{thm:therm} and using the fact that $\bj\in \mathscr{D}$, we find that there exists constants $\eta_{1}=\eta_{1}(|\bj|, |\bk|, |\bl|, d, h)>0$, $c_{1}= c_1(|\bj|, |\bk|, |\bl|, d, h)>0$, $\delta_{1}(|\bj|, |\bk|, |\bl|,d, h)\in (0,1/4)$ such that $R \geq 1/\delta_{1}$ implies   
\begin{align}
\label{eqn:diss1}
\PP( \varphi_{\tau_{10}}^{a_\bj a_\bk a_\bl}(q) \in D_{\eta_{1}}) \geq  \frac{c_{1}}{|\log H(q)|}.
\end{align}
In particular, using independence we obtain 
\begin{align*}
\PP_q (A_0^{\text{damp}}(D_{\eta_1}))&\geq \PP\Big( \varphi_{\tau_{10}}^{a_\bj a_\bk a_\bl}(q) \in D_{\eta_{1}}, u_{10}= a_\bj a_\bk a_\bl , u_{20}=\text{damp} \Big) \\ 
& \geq \frac{c_1}{\log |H(q)|}\frac{1}{|S| (|S|-1)}. \end{align*}

\emph{Case 2}: The quantity $E_{\bj\bk\bl}(x_0, y_0, z_0)$ belongs to the interval 
\begin{align}
\bigg[\frac{\mathcal{E}(x_0, y_0, z_0)}{|\bk|^2}- \zeta_0 \delta_0, \frac{\mathcal{E}(x_0, y_0, z_0)}{|\bk|^2} + \zeta_0 \delta_0\bigg]. 
\end{align}
This part of the argument is more involved because we need to first force the random dynamics out of this interval. Since $\bj\in \mathscr{F}$, we may suppose without loss of generality that $\beta^1$ has $\Delta_{\bj \bk\bl}^1(\beta^1)>0$ where $\Delta_{\bj\bk\bl}^1$ was defined in~\eqref{def:NRC}.  The argument when $ \Delta_{\bj \bk\bl}^1(\beta^1)<0$ is similar.   
Let 
\begin{align*}
q'=(a',b'):= \varphi_{\tau_{10}}^{1}(q),\end{align*}
and notice that by definition of $V_{1}$
\begin{align*}
q'= q + \beta^1 \tau_{10}= (a_\bn + \beta_{a_\bn}^1\tau_{10}, b_\bn + \beta_{b_\bn}^1 \tau_{10})_{\bn\in \ZZ^2}.  
\end{align*}
In this proof, to connect with the notation used in the analysis of~\eqref{eqn:trips3} above, we offer a convenient abuse of notation and set $H:=H(q')$, $\delta =1/H$, $x= a_\bj'/H$, $y= a_\bk'/H$, $z=a_\bl'/H$.  

First, suppose furthermore that for some $C>0$ 
\begin{align}
\label{eqn:hyp1}
a_\bj \beta_{a_\bj}^1\bigg( \frac{1}{|\bj|^2}- \frac{1}{|\bk|^2}\bigg) - a_\bl \beta_{a_\bl}^1 \bigg( \frac{1}{|\bk|^2}- \frac{1}{|\bl|^2}\bigg) \geq - C.   
\end{align}
Then we have \begin{align}
  E_{\bj\bk\bl}(x,y,z) &=\frac{H_0^2}{H^2}\bigg\{E_{\bj\bk\bl}(x_0, y_0, z_0) +  2  \delta_0^2 \tau_{10}\bigg( \frac{a_\bj \beta_{a_\bj}^1}{|\bj|^2}+ \frac{a_\bk \beta_{a_\bk}^1}{|\bk|^2} + \frac{a_\bl \beta_{a_\bl}^1}{|\bl|^2}\bigg)  \\
 \nonumber &\qquad \qquad \qquad  + \delta_0^2 \tau_{10}^2 E_{\bj\bk\bl}(\beta_{a_\bj}^1, \beta_{a_\bk}^1, \beta_{a_\bl}^1) \bigg\} \\
\nonumber & \geq \frac{H_0^2}{H^2}\bigg\{\frac{\mathcal{E}(x_0, y_0, z_0)}{|\bk|^2} - \zeta_0 \delta_0^2 +  2  \delta_0^2 \tau_{10}\bigg( \frac{a_\bj \beta_{a_\bj}^1}{|\bj|^2}+ \frac{a_\bk \beta_{a_\bk}^1}{|\bk|^2} + \frac{a_\bl \beta_{a_\bl}^1}{|\bl|^2}\bigg)\\
\nonumber &  \qquad \qquad+ \delta_0^2 \tau_{10}^2 E_{\bj\bk\bl}(\beta_{a_\bj}^1, \beta_{a_\bk}^1, \beta_{a_\bl}^1)\bigg\}.
\end{align}
Hence  on the event 
\begin{align*}
B_{\bj\bk\bl}^1:=\left\{ 1+\frac{2C+2 \zeta_0}{ \Delta_{\bj\bk\bl}^1(\beta^1)} \leq \tau_{10} \leq 2 +\frac{2C+2 \zeta_0}{ \Delta_{\bj\bk\bl}^1(\beta^1)}  \right\},
\end{align*}
we have
\begin{align}
  \label{prop:case21}E_{\bj\bk\bl}(x,y,z) &\geq \frac{\mathcal{E}(x,y,z)}{|\bk|^2}- \zeta_0 \delta^2 + 2\delta^2 \tau_{10} \bigg\{a_\bj \beta_{a_\bj}^1\bigg( \frac{1}{|\bj|^2}- \frac{1}{|\bk|^2}\bigg) - a_\bl \beta_{a_\bl}^1 \bigg( \frac{1}{|\bk|^2}- \frac{1}{|\bl|^2}\bigg) \bigg\}  \\
&\qquad \nonumber + \delta^2 \tau^2_{10}   \Delta_{\bj\bk\bl}^1(\beta^1)\\
\nonumber & \geq \frac{\mathcal{E}(x,y,z)}{|\bk|^2} - \zeta_0 \delta^2 -2C \delta^2 \tau_{10} + \delta^2 \tau^2_{10}  \Delta_{\bj\bk\bl}^1(\beta^1)  \geq \frac{\mathcal{E}(x,y,z)}{|\bk|^2} + \zeta_0 \delta^2.
 \end{align}
 Also, provided we choose $R>0$ large enough so that 
\begin{align}
\label{eqn:r_*choice}
|q|\geq R \geq \frac{4|\beta^1|}{\xi_0} \bigg(2+ \frac{2C+2 \zeta_0}{ \Delta_{\bj\bk\bl}^1(\beta^1)} \bigg) +1 ,
\end{align}  
  we have on $B^1_{\bj \bk \bl}$
\begin{align}
\label{eqn:HH_0control}
\frac{H_0}{H} = \frac{|q|+1}{|q+\beta^1 \tau_{10}| +1} \geq \frac{|q|+1}{|q|+ |\beta^1| \tau_{10}+1}= \frac{1}{1+ \frac{|\beta^1| \tau_{10}}{|q|+1}} \geq \frac{1}{2}.
\end{align}
Furthermore, on $B_{\bj\bk\bl}^1$ we also have by~\eqref{eqn:initialenergyb} and  Cauchy-Schwarz  
\begin{align}
\label{eqn:Eineq1} 1&\geq \mathcal{E}(x,y,z) \\
&= \frac{H_0^2}{H^2} \bigg\{ \mathcal{E}(x_0, y_0, z_0)+ 2 \tau_{10} \delta_0^2 (a_\bj \beta_{a_\bj}^1 + a_\bk \beta_{a_\bk}^1 + a_\bl  \beta_{a_\bl}^1)  + \tau_{10}^2 \delta_0^2 \mathcal{E}(\beta_{a_\bj}^1,\beta_{a_\bk}^1, \beta_{a_\bl}^1) \bigg\}\\
\label{eqn:Eineq2}& \geq \frac{H_0^2}{H^2}\bigg\{ \xi_0 - |q|^{-1} 2\Big( 2 + \tfrac{2C+ 2\zeta_0}{\Delta_{\bj\bk\bl}^1(\beta^1)}\Big)|\beta^1|\bigg\} \geq \frac{\xi_0}{16} = :\xi_1, \,\,\,\text{ and}\\
\label{eqn:Eineq3} |y|&=\frac{H_0}{H} |y_0 + \tau_{10} \beta_{a_\bk}^1/H_0| \geq \tfrac{1}{2} \sqrt{\xi_0} - \frac{\tau_{10} |\beta^1|}{|q|+1} \geq \frac{\sqrt{\xi_0}}{4}
\end{align}
where we used the choice~\eqref{eqn:r_*choice} and~\eqref{eqn:HH_0control}.  Using the same argument as in~\eqref{eqn:propSNS2}, we also have on $B_{\bj\bk\bl}^1$   
\begin{align}
\label{eqn:calEineq1}
E_{\bj\bk\bl}(x,y,z) \leq \frac{\mathcal{E}(x,y,z)}{|\bj|^2} - \bigg(\frac{1}{|\bj|^2}- \frac{1}{|\bk|^2} \bigg) y^2\leq \frac{\mathcal{E}(x,y,z)}{|\bj|^2} - \bigg(\frac{1}{|\bj|^2}- \frac{1}{|\bk|^2} \bigg) \xi_1.
\end{align}
Let 
\begin{align*}
\zeta_1:= \min\bigg\{ \xi_0,  \bigg(\frac{1}{|\bj|^2}- \frac{1}{|\bk|^2} \bigg) \xi_1, \bigg(\frac{1}{|\bk|^2}- \frac{1}{|\bl|^2} \bigg) \xi_1 \bigg\}. 
\end{align*}
Then, under the hypothesis~\eqref{eqn:hyp1}, we have on the event $B_{\bj\bk\bl}^1$
\begin{align*}
\xi_1\leq \mathcal{E}(x,y,z) \leq 1 \qquad \text{ and } \qquad  \frac{\mathcal{E}(x,y,z)}{|\bk|^2} + \zeta_1 \delta^2 \leq E_{\bj\bk\bl}(x,y,z) \leq \frac{\mathcal{E}(x,y,z)}{|\bj|^2}- \zeta_1. 
\end{align*}
In particular, Assumption~\ref{assump:TNS} (A1) is satisfied for $q' = \varphi_{\tau_{10}}^{1}(q)$ on $B_{\bj\bk\bl}^1$.  Applying Theorem~\ref{thm:therm},      
there exist constants $\eta_{2} \in (0,1/4), c_{2}>0, \delta_{2}\in (0,1/4)$ depending only on $|\bj|,|\bk|, |\bl|, d, h$ such that $R \geq 1/\delta_{2}$ 
\begin{align*}
\PP\big(\varphi^{a_\bj a_\bk a_\bl}_{\tau_{20}} \circ \varphi_{\tau_{10}}^{1}(q)\in D_{\eta_{2}}\big)  \geq \frac{c_{2}}{|\log H(q)|}.
\end{align*}
Hence using independence
\begin{align*}
\PP_q( A_0^{\text{damp}}(D_{\eta_2})) &\geq \PP\big(\varphi^{a_\bj a_\bk a_\bl}_{\tau_{20}} \circ \varphi_{\tau_{10}}^{1}(q)\in D_{\eta_{2}}, u_{10}=1, u_{20}=a_\bj a_\bk a_\bl, u_{30}= \text{damp} \big)\\
&\geq \frac{c_2}{|\log H(q)|} \frac{1}{|S| (|S|-1)(|S|-2)}. 
\end{align*}

On the other hand, suppose now that 
\begin{align}
\label{eqn:hyp2}
&\frac{\mathcal{E}(x_0, y_0, z_0)}{|\bk|^2}- \zeta_0 \delta_0^2\leq E_{\bj\bk\bl}(x_0, y_0, z_0) \leq \frac{\mathcal{E}(x_0, y_0, z_0)}{|\bk|^2}+ \zeta_0 \delta_0^2 \qquad \text{ and }\\
\nonumber  &\qquad \qquad \qquad a_\bj \beta_{a_\bj}^1\bigg( \frac{1}{|\bj|^2}- \frac{1}{|\bk|^2}\bigg) - a_\bl \beta_{a_\bl}^1 \bigg( \frac{1}{|\bk|^2}- \frac{1}{|\bl|^2}\bigg)  \leq -C.  
\end{align}
Picking $C=\zeta_0 + \Delta_{\bj\bk\bl}^1(\beta^1)$, we consider the event
\begin{align}
B_{\bj\bk\bl}^2:=\bigg\{ 1 \leq \tau_{10} \leq  2 \bigg\}.
\end{align}
Observe that
\begin{align}
\label{prop:case22} E_{\bj\bk\bl}(x,y,z) &=   \frac{H_0^2}{H^2}\bigg\{E_{\bj\bk\bl}(x_0, y_0, z_0) +  2  \delta_0^2 \tau_{10}\bigg( \frac{a_\bj \beta_{a_\bj}^1}{|\bj|^2}+ \frac{a_\bk \beta_{a_\bk}^1}{|\bk|^2} + \frac{a_\bl \beta_{a_\bl}^1}{|\bl|^2}\bigg)  \\
\nonumber &\qquad \qquad + \delta_0^2 \tau_{10}^2 E_{\bj\bk\bl}(\beta_{a_\bj}^1, \beta_{a_\bk}^1, \beta_{a_\bl}^1) \bigg\} \\
\nonumber & \leq \frac{H_0^2}{H^2}\bigg\{\frac{\mathcal{E}(x_0, y_0, z_0)}{|\bk|^2} + \zeta_0 \delta_0^2 +  2  \delta_0^2 \tau_{10}\bigg( \frac{a_\bj \beta_{a_\bj}^1}{|\bj|^2}+ \frac{a_\bk \beta_{a_\bk}^1}{|\bk|^2} + \frac{a_\bl \beta_{a_\bl}^1}{|\bl|^2}\bigg)\\\nonumber &\qquad \qquad+ \delta_0^2 \tau_{10}^2 E_{\bj\bk\bl}(\beta_{a_\bj}^1, \beta_{a_\bk}^1,\beta_{a_\bl}^1)\bigg\}.
\end{align}
Thus on $B_{\bj\bk\bl}^2$ we have 
\begin{align}E_{\bj\bk\bl}(x,y,z) &\leq \frac{\mathcal{E}(x,y,z)}{|\bk|^2}+ \zeta_0 \delta^2 + 2\delta^2 \tau_{10} \bigg\{a_\bj \beta_{a_\bj}^1\bigg( \frac{1}{|\bj|^2}- \frac{1}{|\bk|^2}\bigg) - a_\bl \beta_{a_\bl}^1 \bigg( \frac{1}{|\bk|^2}- \frac{1}{|\bl|^2}\bigg) \bigg\}  \\
\nonumber &\qquad\qquad+ \delta^2 \tau^2_{10}   \Delta_{\bj\bk\bl}^1(\beta^1)\\
\nonumber &\leq  \frac{\mathcal{E}(x,y,z)}{|\bk|^2}+ \zeta_0 \delta^2 - 2C\delta^2 \tau_{10}  + \delta^2 \tau^2_{10}   \Delta_{\bj\bk\bl}^1(\beta^1) \leq \frac{\mathcal{E}(x,y,z)}{|\bk|^2} - \zeta_0 \delta^2 \end{align}
and 
\begin{align}
\label{eqn:HH_0control1}
\frac{H_0}{H} = \frac{|q|+1}{|q+\beta^1 \tau_{10}|+1} \geq \frac{|q|+1}{|q|+ |\beta^1| \tau_{10}+1}= \frac{1}{1+ \frac{|\beta^1| \tau_{10}}{|q|+1}}  \geq \frac{1}{2},
\end{align}
where we used the choice of $R$ in~\eqref{eqn:r_*choice}.  Moreover, on $B_{\bj\bk \bl}^2$ we also have~\eqref{eqn:Eineq1}-\eqref{eqn:Eineq2} by the same arguments used above.  Lastly, we note that on $B_{\bj\bk\bl}^2$ we have by the same arguments in~\eqref{eqn:Eineq3} and~\eqref{eqn:calEineq1}
\begin{align*}
E_{\bj\bk\bl}(x, y, z)\geq \frac{\mathcal{E}(x, y, z)}{|\bl|^2}+ \bigg(\frac{1}{|\bk|^2}- \frac{1}{|\bl|^2}\bigg) y^2 \geq \frac{\mathcal{E}(x_0, y_0, z_0)}{|\bl|^2}+  \bigg(\frac{1}{|\bk|^2}- \frac{1}{|\bl|^2}\bigg) \xi_1.  
\end{align*}
In particular, we have shown that on the event $B^2_{\bj\bk\bl}$ 
\begin{align*}
\xi_1\leq \mathcal{E}(x,y,z) \leq 1 \qquad \text{ and } \qquad  \frac{\mathcal{E}(x,y,z)}{|\bk|^2} + \zeta_1  \leq \mathcal{E}_{\bj\bk\bl}(x,y,z) \leq \frac{\mathcal{E}(x,y,z)}{|\bj|^2}- \zeta_1 \delta^2. 
\end{align*}
In particular, Assumption~\ref{assump:TNS} (A2) is satisfied for $q' = \varphi_{\tau_{10}}^{1}(q)$ on $B_{\bj\bk\bl}^2$.  Applying Theorem~\ref{thm:therm} and adjusting the constants $\eta_{2}, c_{2}, \delta_{2}$ accordingly, it follows that if $R\geq 1/\delta_{2}$ we have    
\begin{align*}
\PP(\varphi^{a_\bj a_\bk a_\bl}_{\tau_{20}} \circ \varphi_{\tau_{10}}^{1}(q)\in D_{\eta_{2}}) \geq  \frac{c_{2}}{|\log H(q)|}.
\end{align*}
Hence using independence
\begin{align*}
\PP_q(A_0^{\text{damp}} (D_{\eta_2})) &\geq \PP\Big(\varphi^{a_\bj a_\bk a_\bl}_{\tau_{20}} \circ \varphi_{\tau_{10}}^{1}(q)\in D_{\eta_{2}}, u_{10}=1, u_{20}=a_\bj a_\bk a_\bl, u_{30}=\text{damp}\Big) \\
& \geq  \frac{c_{2}}{|\log H(q)|}\frac{1}{|S| (|S|-1)(|S|-2)}.
\end{align*}
This finishes the proof. 

\end{proof}

\begin{proof}[Proof of Theorem~\ref{thm:NSElb} under Assumption~\ref{assump:SNS} (DF2)]
Without loss of generality, throughout this argument we suppose that $\{ (1,0), (N,N)\}\subset \mathscr{D}$ as the other case is similar.  Suppose $q=(a,b)\in H_{>R+1}$ so that $|q|\geq R$ for some $R>3$ sufficiently large to be determined below and suppose without loss of generality that $a_\bk$, $\bk \in \ZZ^2$, has
\begin{align*}
|a_\bk| \geq  \frac{|q|}{\sqrt{d}}. 
\end{align*} 
The case when $|b_\bk| \geq |q|/\sqrt{d}$ proceeds in a nearly identical fashion.  Below, we only provide the details in cases of initial data whose arguments are different than those considered under Assumption~\ref{assump:SNS} (DF1).    

\emph{Case 1}.  Suppose that $\bj=(0,1)$, $\bl=\bj+\bk\in \ZZ^2$ and 
\begin{align*}
q=(a,b) \notin \bigg\{ \frac{\mathcal{E}(a_\bj, a_\bk, a_\bl)}{|\bk|^2}+ \zeta_0(\bj,\bk)  \leq E_{\bj\bk\bl}(a_\bj, a_\bk, a_\bl) \leq \frac{\mathcal{E}(a_\bj, a_\bk, a_\bl)}{|\bk|^2} - \zeta_0 (\bj,\bk)\bigg\}.
\end{align*}
Using the arguments in the proof of Theorem~\ref{thm:NSElb}~(DF1) \emph{Case 1}, it follows that there exists constants $\eta_{1}=\eta_{1}(|\bj|, |\bk|, |\bl|,d, h)>0$, $c_{1}= c_1(|\bj|, |\bk|, |\bl|, d, h)>0$, $\delta_{1}(|\bj|, |\bk|, |\bl|,d , h)>0$ such that $R \geq 1/\delta_{1}$ implies   
\begin{align}
\label{eqn:diss1}
\PP( |\varphi_{\tau_{10}}^{a_\bj a_\bk a_\bl}(q) \cdot e_{a_\bj}| \geq \eta_1 |H(q)|) \geq \frac{c_{1}}{|\log H(q)|}.
\end{align}
Note that the difference here under (DF2) versus (DF1) is that $\bj$ is not necessarily damped, so $\varphi_{\tau_{10}}^{a_\bj a_\bk a_\bl}(q)$ may not belong to $D_{\eta_1}$.  However, letting $\bl':=(1,1)$ and noting that $\bl' \in \mathscr{F}$, we have that $\beta_{a_{\bl'}}^\ell\neq 0$ for some $\ell=1,2,\ldots, \mathfrak{m}$.  Suppose without loss of generality that $\beta_{a_{\bl'}}^1>0 $ and define 
\begin{align*}
q'=(a',b')=  \varphi_{\tau_{10}}^{a_\bj a_\bk a_\bl}(q)\quad \text{and} \quad 
q''=(a'',b'')=\varphi_{\tau_{20}}^{1}\circ q'.
\end{align*} 
If $a_{\bl'}\geq -1$, then on the event 
\begin{align*}
\bigg\{  \frac{2}{\beta_{a_{\bl'}}^1} \leq \tau_{20} \leq \frac{4}{\beta_{a_{\bl'}}^1}\bigg\}\cap \bigg\{  |\varphi_{\tau_{10}}^{a_\bj a_\bk a_\bl}(q) \cdot e_{a_\bj}| \geq \eta_1 |H(q)|\bigg\}
\end{align*}
and ensuring $R>0$ is large enough so that 
\begin{align*}
R \geq \frac{8}{\eta_1}\frac{|\beta^1|}{\beta^1_{a_{\bl'}}}  +1
\end{align*}
we have 
\begin{align*}
&a_{\bl'}''= a_{\bl'}+ \beta^1_{a_{\bl'}} \tau_{20} \geq - 1 + 2 =1, \quad \frac{H(q'')}{H(q')}\geq \frac{|q'|+1- |\beta^1| \tau_{20}}{|q'| +1} \geq 1- \frac{|\beta^1| \tau_{20}}{|q|} \geq 1/2,\\
&\frac{H(q')}{H(q'')} \geq \frac{1}{1+ |\beta^1| \tau_{20}/H(q)} \geq \frac{1}{2}, \quad |a_{\bj}''| = |a_{\bj}' + \beta^1_{a_\bj} \tau_{20}| \geq \eta_1 H(q) - |\beta^1| \tau_{20} \geq \frac{\eta_1}{4} H(q'').  
\end{align*}
Thus letting $\bk'=(1,0)$ and applying Lemma~\ref{lem:spinners}, we find that there exists $\eta_3, c_3>0$ small enough such that for all $R>0$ large enough \begin{align}
\PP\Big(\varphi_{\tau_{30}}^{a_{\bj} a_{\bk'} a_{\bl'}} \circ \varphi_{\tau_{20}}^{1}\circ \varphi_{\tau_{10}}^{a_\bj a_\bk a_\bl}(q)\in D_{\eta_3}\Big) \geq  \frac{c_3}{|\log(H(q))|}.
\end{align}
In particular,
 \begin{align*}
 &\PP_q\Big( A_0^\text{damp}(D_{\eta_{3}})\Big)\\
 &\geq \PP\Big(\varphi_{\tau_{30}}^{a_{\bj} a_{\bk'} a_{\bl'}} \circ \varphi_{\tau_{20}}^{1}\circ \varphi_{\tau_{10}}^{a_\bj a_\bk a_\bl}(q)\in D_{\eta_3}, u_{10}=a_\bj a_\bk a_\bl, u_{20}=1, u_{30}= a_{\bj} a_{\bk'} a_{\bl'}\Big)\\
 & \geq \frac{c_3}{|\log H(q)|} \frac{1}{|S| (|S|-1)(|S|-2)}. \end{align*}
One can apply a similar argument in the case when $a_{\bl'}\leq -1$ to obtain the same conclusion, provided we adjust $\eta_3, c_3$ and $R$ accordingly.  

\emph{Case 2}: Suppose that $\bj=(0,1)$, $\bl=\bj+\bk\in \ZZ^2$ and 
\begin{align*}
q=(a,b) \in \bigg\{ \frac{\mathcal{E}(a_\bj, a_\bk, a_\bl)}{|\bk|^2}+ \zeta_0(\bj,\bk) \leq E_{\bj\bk\bl}(a_\bj, a_\bk, a_\bl) \leq \frac{\mathcal{E}(a_\bj, a_\bk, a_\bl)}{|\bk|^2} - \zeta_0 (\bj,\bk)\bigg\}.
\end{align*}
 In this case, we slightly modify the arguments used in the proof of Theorem~\ref{thm:NSElb} (DF1) \emph{Case 2}.  Since $\bj=(0,1)\in \mathscr{F}$, we may again suppose without loss of generality that $\beta^1$ has $\Delta_{\bj \bk\bl}^1(\beta^1)>0$.  Let $\bm=(1,1)$ and recall that $\bm\in \mathscr{F}$.  We suppose without loss of generality that $\beta^1_{a_\bm}\neq 0$.

Let 
\begin{align*}
q'=(a',b'):= \varphi_{\tau_{10}}^{1}(q) \quad \text{ and } \quad q''=(a'',b'') =\varphi^{a_\bj a_\bk a_\bl}_{\tau_{20}} \circ \varphi_{\tau_{10}}^{1}(q), \end{align*}
In this proof, to connect with the notation used in the analysis of~\eqref{eqn:trips3} above, we again offer a convenient abuse of notation and set $H:=H(q')$, $\delta =1/H$, $x= a_\bj'/H$, $y= a_\bk'/H$, $z=a_\bl'/H$.  

First, suppose furthermore that 
\begin{align}
\label{eqn:hyp3}
a_\bj \beta_{a_\bj}^1\bigg( \frac{1}{|\bj|^2}- \frac{1}{|\bk|^2}\bigg) - a_\bl \beta_{a_\bl}^1 \bigg( \frac{1}{|\bk|^2}- \frac{1}{|\bl|^2}\bigg) \geq -  (\zeta_0 + \Delta_{\bj\bk\bl}^1(\beta^1)).   
\end{align}
Observe that on the event 
\begin{align*}
B_{\bj\bk\bl}^1:=\left\{ 1+\frac{4 \zeta_0+ 2\Delta_{\bj\bk\bl}^1(\beta^1)}{ \Delta_{\bj\bk\bl}^1(\beta^1)} \leq \tau_{10} \leq 2 +\frac{4 \zeta_0 +2\Delta_{\bj\bk\bl}^1(\beta^1) }{ \Delta_{\bj\bk\bl}^1(\beta^1)}  \right\},
\end{align*}
for $R>0$ large enough following the calculations starting at~\eqref{prop:case21}, we have 
\begin{align*}
\xi_1\leq \mathcal{E}(x,y,z) \leq 1 \qquad \text{ and } \qquad  \frac{\mathcal{E}(x,y,z)}{|\bk|^2} + \zeta_1 \delta^2 \leq E_{\bj\bk\bl}(x,y,z) \leq \frac{\mathcal{E}(x,y,z)}{|\bj|^2}- \zeta_1. 
\end{align*}
On the other hand
\begin{align*}
|a_{\bm}'| = |a_{\bm}+ \beta^1_{a_{\bm}} \tau_{10}  | \leq \frac{|\beta^1_{a_{\bm}}|}{4} \quad \text{ if and only if } \quad  -\frac{1}{4} - \frac{a_\bm}{\beta^1_{a_\bm}} \leq \tau_{10} \leq \frac{1}{4}- \frac{a_{\bm}}{\beta^1_{a_{\bm}}}.
\end{align*}
In particular, because $|(-1/4+ \alpha, \alpha+1/4)|=1/2$ for any $\alpha \in \R$ where $| \cdot | $ denotes Lebesgue measure, the event
\begin{align*}
\tilde{B}_{\bj\bk\bl}^1:= B_{\bj\bk\bl}^1 \cap \Big\{  \tau_{10} \notin [ -\tfrac{1}{4} - \tfrac{a_\bm}{\beta^1_{a_\bm}},\tfrac{1}{4} - \tfrac{a_\bm}{\beta^1_{a_\bm}} ]\Big\}\end{align*}
satisfies 
\begin{align*}
\PP(\tilde{B}_{\bj\bk\bl}^1)\geq \PP\bigg\{ \frac{3}{2}+\frac{4 \zeta_0+ 2\Delta_{\bj\bk\bl}^1(\beta^1)}{ \Delta_{\bj\bk\bl}^1(\beta^1)} \leq \tau_{10} \leq 2 +\frac{4 \zeta_0 +2\Delta_{\bj\bk\bl}^1(\beta^1) }{ \Delta_{\bj\bk\bl}^1(\beta^1)} \bigg\}>0.
\end{align*}
Hence Assumption~\ref{assump:TNS} (A1) is satisfied for $q' = \varphi_{\tau_{10}}^{1}(q)$ on $\tilde{B}_{\bj\bk\bl}^1$ provided~\eqref{eqn:hyp3} is also satisfied.  Applying Theorem~\ref{thm:therm},      
there exist constants $\eta_{4} \in (0,1/4), c_{4}>0, \delta_{4}\in (0,1)$ depending only on $|\bj|,|\bk|, |\bl|, d, h$ such that $R\geq 1/\delta_{4}$ and $q$ satisfying the above hypotheses implies
\begin{align*}
\PP\Big(|\varphi^{a_\bj a_\bk a_\bl}_{\tau_{20}} \circ \varphi_{\tau_{10}}^{1}(q)\cdot a_\bj|\geq \eta_4 H(q)\Big) \geq   \frac{c_{4}}{|\log H(q)|}.
\end{align*}
Additionally, we have that $|a_\bm'| \geq |\beta^1_{a_\bm}|/4>0$ on $\tilde{B}_{\bj\bk\bl}^1$.  Letting $\bn=(1,0)$ and applying Lemma~\ref{lem:spinners}, there exists $\eta_5, \delta_5, c_5>0$ small enough so that $R\geq 1/\delta_5$ and $q$ satisfying the hypotheses above has
\begin{align*}
&\PP_q(A_0^\text{damp}(D_{\eta_5})) \\
&\geq \PP\Big( \varphi_{\tau_{30}}^{a_\bn a_\bj a_\bm}\circ\varphi^{a_\bj a_\bk a_\bl}_{\tau_{20}} \circ \varphi_{\tau_{10}}^{1}(q) \in D_{\eta_5}, u_{10}=1, u_{20}=a_\bj a_\bk a_\bl, u_{30}=a_\bn a_\bj a_\bm\Big)\\
&\geq \frac{c_{5}}{|\log H(q)|}\frac{1}{|S| (|S|-1)(|S|-2)}.
\end{align*}

On the other hand, suppose that 
\begin{align}
\label{eqn:hyp4}
&\frac{\mathcal{E}(a_\bj, a_\bk, a_\bl)}{|\bk|^2}- \zeta_0 \leq E_{\bj\bk\bl}(a_\bj, a_\bk, a_\bl) \leq \frac{\mathcal{E}(a_\bj, a_\bk, a_\bl)}{|\bk|^2}+ \zeta_0  \qquad \text{ and }\\
\nonumber  &\qquad \qquad \qquad a_\bj \beta_{a_\bj}^1\bigg( \frac{1}{|\bj|^2}- \frac{1}{|\bk|^2}\bigg) - a_\bl \beta_{a_\bl}^1 \bigg( \frac{1}{|\bk|^2}- \frac{1}{|\bl|^2}\bigg)  \leq -(\zeta_0 + \Delta_{\bj\bk\bl}^1(\beta^1)).  
\end{align}
Recalling the event
\begin{align}
B_{\bj\bk\bl}^2=\bigg\{ 1 \leq \tau_{10} \leq 2 \bigg\},
\end{align}
observe that on $B_{\bj\bk\bl}^2$, following the calculations starting at~\eqref{prop:case22} we have 
\begin{align*}
\xi_1\leq \mathcal{E}(x,y,z) \leq 1 \qquad \text{ and } \qquad  \frac{\mathcal{E}(x,y,z)}{|\bk|^2} + \zeta_1  \leq E_{\bj\bk\bl}(x,y,z) \leq \frac{\mathcal{E}(x,y,z)}{|\bj|^2}- \zeta_1 \delta^2. 
\end{align*}
On the other hand
\begin{align*}
|a_{\bm}'| = |a_{\bm} + \beta^1_{a_{\bm}} \tau_{10}  | \leq \frac{|\beta^1_{a_{\bm}}|}{4} \quad \text{ if and only if } \quad  -\frac{1}{4} - \frac{a_\bm}{\beta^1_{a_\bm}} \leq \tau_{10} \leq \frac{1}{4}- \frac{a_\bm}{\beta^1_{a_{\bm}}}.  
\end{align*}
In particular, because $|(-1/4+ \alpha, \alpha+1/4)|=1/2$ for any $\alpha \in \R$, the event
\begin{align*}
\tilde{B}_{\bj\bk\bl}^2:= B_{\bj\bk\bl}^2 \cap \Big\{  \tau_{10} \notin [ -\tfrac{1}{4} - \tfrac{a_\bm}{\beta^1_{a_\bm}},\tfrac{1}{4} - \tfrac{a_\bm}{\beta^1_{a_\bm}} ]\Big\}\end{align*}
satisfies 
\begin{align*}
 \PP(\tilde{B}_{\bj\bk\bl}^2)\geq \PP\bigg\{ \frac{3}{2} \leq \tau_{10} \leq 2  \bigg\}>0.
\end{align*}
By the above, we note that Assumption~\ref{assump:TNS} (A2) is satisfied for $q' = \varphi_{\tau_{10}}^{1}(q)$ on $\tilde{B}_{\bj\bk\bl}^2$ provided~\eqref{eqn:hyp4} is also satisfied.  Applying Theorem~\ref{thm:therm},      
there exist constants $\eta_{6} \in (0,1/4), c_{6}>0, \delta_{6}\in (0,1)$ depending only on $|\bj|,|\bk|, |\bl|,d, h$ such that $R \geq 1/\delta_{6}$ implies
\begin{align*}
\PP\Big( |\varphi^{a_\bj a_\bk a_\bl}_{\tau_{20}} \circ \varphi_{\tau_{10}}^{1}(q)\cdot a_\bj|\geq \eta_6 H(q)\Big) \geq  \frac{c_{6}}{|\log H(q)|}.
\end{align*}
Additionally, we have that $|a_\bm'| \geq |\beta^1_{a_\bm}|/4>0$ on $\tilde{B}_{\bj\bk\bl}^2$.  Letting $\bn=(1,0)$ and applying Lemma~\ref{lem:spinners}, there exists $\eta_7, \delta_7, c_7>0$ small enough so that $R\geq 1/\delta_7$ and $q$ satisfying the above has
\begin{align*}
\PP\Big( \varphi_{\tau_{30}}^{a_\bn a_\bj a_\bm}\circ\varphi^{a_\bj a_\bk a_\bl}_{\tau_{20}} \circ \varphi_{\tau_{10}}^{1}(q) \in D_{\eta_7}\Big)  \geq  \frac{c_{7}}{|\log H(q)|}
\end{align*}
so that 
\begin{align*}
&\PP_q(A_0^\text{damp}(D_{\eta_7})) \\
&\geq \PP\Big( \varphi_{\tau_{30}}^{a_\bn a_\bj a_\bm}\circ\varphi^{a_\bj a_\bk a_\bl}_{\tau_{20}} \circ \varphi_{\tau_{10}}^{1}(q) \in D_{\eta_7}, u_{10}=1, u_{20}= a_\bj a_\bk a_\bl, u_{30}=a_\bn a_\bj a_\bm\Big) \\
& \geq \frac{c_{7}}{|\log H(q)|}\frac{1}{|S| (|S|-1)(|S|-2)}. 
\end{align*}

Putting the above arguments together, we have shown that we can choose $R>3$ large enough, $c_*, \eta_*>0$ small enough so that for $q\in H_{R+1}$ we have
\begin{align*}
\PP_q(A_0^\text{damp}(D_{\eta_*}))\geq \frac{c_*}{\log(H(q))}.
\end{align*}
This concludes the proof. 

\end{proof}

\section*{Appendix: Derivation of the Galerkin splittings from the Euler equation}  

In this section, we derive the splitting studied in Section~\ref{sec:NSE} starting from the two-dimensional forced and damped Euler equation on the torus $\mathbf{T}^2=[0,2\pi]^2$.  
 Most of the derivation is done in~\cite{AMM_23}, but we provide the details for completeness.

Recall that in vorticity formulation, the damped and forced two-dimensional Euler equation on $\mathbf{T}^2$ reads 
\begin{align}
\label{eqn:vort}
\begin{cases}
\partial_t q + (\mathcal{K} q\cdot \nabla) q +\Lambda q = \beta,\\
\text{div}(q) = 0
\end{cases}
\end{align}
where $q$ is a scalar quantity called the \emph{vorticity}, $\mathcal{K} = \nabla^\perp (-\Delta)^{-1}$ is the \emph{Biot-Savart} operator, and $\nabla^\perp:=(\partial_2, -\partial_1)$.  Although we will describe $\Lambda, \beta$ more precisely below, the linear operator $\Lambda$ should be thought of as a partial Laplacian while $\beta$ is a background forcing term acting on select frequencies.   

Assuming there is no mean flow; that is,
\begin{align}
\int_{\mathbf{T}^2} q(t, x) \, dx =0 \,\,\, \text{ for all } t\geq 0, 
\end{align}
we express the solution $q$ of~\eqref{eqn:vort} in Fourier space as 
\begin{align}
q(t,x) = \sum_{\bj \in \mathbf{Z}^2_{\neq 0}} q_\bj(t) e_\bj(x)
\end{align}
where $\{ e_\bj \}_{\bj\in \mathbf{Z}_{\neq 0}^2}$ is the orthonormal family on $L^2(\mathbf{T}^2; \R)$ given by $$e_\bj(x) := \frac{1}{2\pi} \exp( i x \cdot \bj).$$  Let $\mathscr{D}\subset \Z_{\neq 0}^2$. We suppose that 
\begin{align}
\Lambda q := \sum_{\bj \in \mathscr{D}} \lambda_\bj q_\bj e_\bj  \quad \text{ and }\quad  \beta= \sum_{\bj \in \Z_{\neq 0}^2} (\beta_{a_\bj} + i \beta_{b_\bj} )e_\bj 
\end{align}  
where $\lambda_\bj$, $\bj\in \mathbf{Z}_{\neq 0}^2$ and $\beta_{a_\bj}, \beta_{b_\bj} \in \R$.

Plugging this information into~\eqref{eqn:vort} we arrive at the following equation 
\begin{align}
\label{eqn:eulerF}
\dot{q} = - \sum_{\bj+\bk+\bl =0}  \theta_{\bk\bl} \bar{q}_\bk \bar{q}_\bl e_\bj - \sum_{\bj\in \mathscr{D}} \lambda_\bj q_\bj e_\bj  + \sum_{\bj \in \Z_{\neq 0}^2} (\beta_{a_\bj}+ i \beta_{b_\bj})e_\bj
\end{align}
where, in the sum above, $\bj,\bk, \bl \in \mathbf{Z}_{\neq 0}^2$ and the coefficients $\theta_{\bk \bl}$ in~\eqref{eqn:eulerF} are as in~\eqref{eqn:coef}.   
%
% To the equation~\eqref{eqn:eulerF}, we add damping and forcing in the following way
%\begin{align}
%\label{eqn:eulerfd}
%\dot{q}= - \sum_{j+k+\bl =0}  C_{k\bl} \bar{q}_k \bar{q}_\bl e_j - \sum_{i\in \mathcal{D}} \lambda_i q_i e_i  + \sum_{i\in \mathcal{F}} \sigma_i e_i
%\end{align}
%where $\mathcal{D}$ and $\mathcal{F}$ are subsets of $\mathbf{Z}_{\neq 0}^2$, the $\lambda_i $, $i\in \mathbf{Z}_{\neq 0}^2$, are positive constants, and $\sigma_i \in \mathbf{C}$.  We will assume that the forced constants $\sigma_i= \alpha_i + i \gamma_i$ are such that $\alpha_i, \gamma_i \in \R_{\neq 0}$ for all $i\in \mathcal{F}$.  Further assumptions on the sets $\mathcal{D}$ and $\mathcal{F}$ will be made momentarily.  

%In what follows, we consider Galerkin projections of relation~\eqref{eqn:eulerfd} to the finite lattice
%\begin{align}
%\Pi_N\mathbf{Z}_{\neq 0}^2:= \{ j=(j_1, j_2) \in \mathbf{Z}_{\neq 0}^2 \, : \, |j_1| \leq N, \, |j_2| \leq N\}.  
%\end{align}       
%In particular, the (projected) equation of interest reads 
%\begin{align}
%\label{eqn:eulerfd}
%\dot{q}= - \sum_{\substack{j+k+\bl =0\\j,k, \bl \in \Pi_N \mathbf{Z}_{\neq 0}^2}}  C_{k\bl} \bar{q}_k \bar{q}_\bl e_j - \sum_{i\in \Pi_N\mathcal{D}} \lambda_i q_i e_i  + \sum_{i\in \Pi_N \mathcal{F}} \sigma_i e_i
%\end{align}
%where in~\eqref{eqn:eulerfd} 
%\begin{align}
%q=\sum_{j \in\Pi_N \mathbf{Z}_{\neq 0}^2} q_j e_j,  \qquad  \Pi_N\mathcal{D}:= \mathcal{D} \cap \Pi_N\mathbf{Z}_{ \neq 0}^2 , \qquad \Pi_N \mathcal{F}:= \mathcal{F} \cap \Pi_N\mathbf{Z}_{\neq 0}^2.  
%\end{align}

Letting $q_\bj=a_\bj + i b_\bj$, $\bj\in  \mathbf{Z}^2_{\neq 0}$, and using redundancies created by the reality condition $\overline{q}= q$, we can express~\eqref{eqn:eulerF} as the family of equations
\begin{align}
\label{eqn:rimproj}
\begin{cases}
\dot{a}_\bj =\displaystyle{ \sum_{\bj+\bk-\bl =0} \theta_{\bk\bl}( a_\bk a_\bl + b_\bk b_\bl) + \sum_{\bj-\bk-\bl=0} \theta_{\bk\bl} (b_{\bk} b_\bl - a_\bk a_\bl)- \mathbf{1}_{\mathscr{D}}(\bj) \lambda_\bj a_\bj + \beta_{a_\bj} } \\
\dot{b}_\bj = \displaystyle{\sum_{\bj+\bk-\bl=0} \theta_{\bk\bl}(a_\bk b_\bl -b_\bk a_\bl) - \sum_{\bj-\bk-\bl=0} \theta_{\bk\bl}(a_\bk b_\bl + b_{\bk} a_\bl) - \mathbf{1}_{\mathscr{D}}(j) \lambda_\bj b_\bj + \beta_{b_\bj}}
\end{cases}
\end{align}
where all indices $\bj$, $\bk$, $\bl$ belong to the  first quadrant 
\begin{align}
\mathbf{Z}_+^2 = \{ \bj=(j_1, j_2) \in \mathbf{Z}^2_{\neq 0} \, : \, j_2 >0  \} \cup \{ \bj=(j_1, j_2)\in  \mathbf{Z}_{\neq 0}^2 \, : \, j_2 =0 \text{ and } j_1 >0  \}.
\end{align}

Ignoring the dissipative and forced terms in equation~\eqref{eqn:rimproj}, we note that for any triple $\bj,\bk, \bl \in \mathbf{Z}^2_{+}$ with $\bj+\bk- \bl =0$ (which is the same as $\bl -\bj-\bk=0$), we can break apart the nonlinear term in~\eqref{eqn:rimproj} that has only the these indices into four groups of three equations given by: 
\begin{align}
\label{eqn:splittingE}
\begin{cases}
\dot{a}_\bj = \theta_{\bk\bl} a_\bk a_\bl\\
\dot{a}_\bk = \theta_{\bj\bl} a_\bj a_\bl \\
\dot{a}_\bl = - \theta_{\bj\bk} a_\bj a_\bk 
\end{cases}, \,\,\, \begin{cases}
\dot{a}_\bj = \theta_{\bk\bl} b_\bk b_\bl \\
\dot{b}_\bk = \theta_{\bj\bl} a_\bj b_\bl \\
\dot{b}_\bl = -\theta_{\bj\bk} a_\bj b_\bk 
\end{cases}, \,\,\, \begin{cases}
\dot{b}_\bj = \theta_{\bk\bl} a_\bk b_\bl \\
\dot{a}_\bk = \theta_{\bj\bl} b_\bj b_\bl \\
\dot{b}_\bl= -\theta_{\bj\bk} b_\bj a_\bk 
\end{cases},\,\,\,
\begin{cases}
\dot{b}_\bj = -\theta_{\bk\bl} b_\bk a_\bl \\
\dot{b}_\bk =- \theta_{\bj\bl} b_\bj a_\bl \\
\dot{a}_\bl= \theta_{\bj\bk} b_\bj b_\bk.   
\end{cases}
\end{align} 
Importantly, each of the systems above conserves the \emph{relative enstrophy} and \emph{relative energy}; that is, for example,  for the first equation in~\eqref{eqn:splittingE}, the \emph{relative enstrophy} is 
\begin{align}
\mathcal{E}(a_\bj, a_\bk, a_\bl):=a_\bj^2+ a_\bk^2+a_\bl^2, 
\end{align}
which is conserved, and the \emph{relative energy} is 
\begin{align}
E_{\bj\bk\bl}(a_\bj, a_\bk, a_\bl):=\frac{a_\bj^2}{|\bj|^2}+ \frac{a_\bk^2}{|\bk|^2}+ \frac{a_\bl^2}{|\bl|^2}, 
\end{align}
which is also conserved.  We let $V_{a_\bj a_\bk a_\bl}$, $V_{a_\bj b_\bk b_\bl}$,  $V_{b_\bj a_\bk b_\bl} $ and $V_{b_\bj b_\bk a_\bl}$ denote the vector fields corresponding to the systems in~\eqref{eqn:splittingE} from left to right, respectively.  That is, for example, $V_{a_\bj b_\bk b_\bl}$ has $a_\bj$th coordinate given by $\theta_{\bk\bl}  b_\bk b_\bl$, $b_\bk$th coordinate given by $\theta_{\bj \bl} a_\bj b_\bl$, $b_\bl$th coordinate given by $-\theta_{\bj\bk} a_\bj b_\bk$ with all other coordinates (in $\R^\infty$) equal to $0$.

Let $V_\text{damp}$ denote the vector field (on $\R^\infty$) such that for every $\bj\in  \Z_{+}^2$, the $a_\bj$th entry is given by $-\textbf{1}_{\mathscr{D}}(\bj)\lambda_\bj a_\bj $ and $b_\bj$th entry is given by $-\mathbf{1}_{\mathscr{D}}(\bj) \lambda_\bj b_\bj$.  Fixing $\mathfrak{m} \in \N$, for any $\bj\in \Z_{+}^2$ we let 
\begin{align*}
\beta_{a_\bj}&= \sum_{\ell=1}^\mathfrak{m} \beta_{a_\bj}^\ell \quad \text{ and } \quad \beta_{b_\bj}= \sum_{\ell=1}^\mathfrak{m} \beta_{b_\bj}^\ell
\end{align*} 
where $\beta_{a_\bj}^\ell, \beta_{b_\bj}^\ell \in \R$.  For any $\ell=1,2,\ldots, \mathfrak{m}$, we let $V_{\ell} $ denote the vector field (on $\R^\infty$) such that for every $\bj \in \Z_{+}^2$, the $a_\bj$the entry is $\beta_{a_\bj}^\ell$ and the $b_\bj$th entry is $\beta_{b_\bj}^\ell$.

Consider now the collection of vector fields 
\begin{align}
\label{eqn:splitE}
\mathscr{S}:= \{V_{\text{damp}}, V_{1}, \ldots, V_{\mathfrak{m}}\} \cup \bigcup_{\substack{\bj+\bk-\bl=0\\\bj,\bk, \bl \in \Z_+^2\\\langle \bj, \bk^\perp\rangle\neq 0}} \{ V_{a_\bj a_\bk a_\bl}, V_{a_\bj b_\bk b_\bl}, V_{b_\bj a_\bk b_\bl}, V_{b_\bj b_\bk a_\bl} \} 
\end{align}
Note that, if $\langle \bj,\bk^\perp\rangle=0$, then the vector fields $V_{a_\bj a_\bk a_\bl}, V_{a_\bj, b_\bk b_\bl}, V_{b_\bj, a_\bk b_\bl}, V_{b_\bj b_\bk a_\bl}$ with $\bj+\bk=\bl$ are identically zero, hence the restriction above in~\eqref{eqn:splitE}.  

The focus in Section~\ref{sec:NSE} is on a finite-dimensional version of the above set $\mathscr{S}$ where we project the above vector fields onto indices belonging to the finite lattice
\begin{align}
\ZZ^2 := \{ j=(j_1, j_2) \in \mathbf{Z}_{+}^2 \, : \, |j_1| \leq N, \, |j_2 | \leq N\}
\end{align}
as well as project the vector fields onto the relevant finite-dimensional Euclidean space. 
That is, we still consider the splitting in~\eqref{eqn:splitE}, but the vector fields are now vector fields on $\R^{2 | \ZZ^2|}=\R^{2N(N+2)}=:\R^{d}$ rather than $\R^\infty$ and all indices $\bj,\bk, \bl $ belong to $\ZZ^2$.  In Section~\ref{sec:NSE}, we make the slight abuse of notation and give these vector fields the same names as above.  That is, our sought after splitting is described by the collection of vector fields on $\R^{d}$ as in~\eqref{eqn:NSEsplit}.

\bibliographystyle{plain}
\bibliography{randoms}

\end{document}